\newtheorem{theorem}{Theorem}[section]
\newtheorem{lemma}[theorem]{Lemma}
\newtheorem{proposition}[theorem]{Proposition}
\newtheorem{corollary}[theorem]{Corollary}
\theoremstyle{definition}
\newtheorem{definition}{Definition}[section]
\newtheorem{example}{Example}[section]
\newtheorem{assumption}{Assumption}
\newtheorem{remark}[theorem]{Remark}
\newcommand{\N}{\mathbb{N}}
\newcommand{\R}{\mathbb{R}}
\let\pdfoutput=\undefined\fi
\chardef\@x10\chardef\@xv60
\def\tcitime{
\def\@time{%
  \@minute\time\@hour\@minute\divide\@hour\@xv
  \ifnum\@hour<\@x 0\fi\the\@hour:%
  \multiply\@hour\@xv\advance\@minute-\@hour
  \ifnum\@minute<\@x 0\fi\the\@minute
  }}%
\def\x@hyperref#1#2#3{%
   % Turn off various catcodes before reading parameter 4
   \catcode`\~ = 12
   \catcode`\$ = 12
   \catcode`\_ = 12
   \catcode`\# = 12
   \catcode`\& = 12
   \y@hyperref{#1}{#2}{#3}%
}
\def\y@hyperref#1#2#3#4{%
   #2\ref{#4}#3
   \catcode`\~ = 13
   \catcode`\$ = 3
   \catcode`\_ = 8
   \catcode`\# = 6
   \catcode`\& = 4
}
\def\QCTOpt[#1]#2{%
  \def\QCTOptB{#1}
  \def\QCTOptA{#2}
}
\def\QCTNOpt#1{%
  \def\QCTOptA{#1}
  \let\QCTOptB\empty
}
\def\Qct{%
  \@ifnextchar[{%
    \QCTOpt}{\QCTNOpt}
}
\def\QCBOpt[#1]#2{%
  \def\QCBOptB{#1}%
  \def\QCBOptA{#2}%
}
\def\QCBNOpt#1{%
  \def\QCBOptA{#1}%
  \let\QCBOptB\empty
}
\def\Qcb{%
  \@ifnextchar[{%
    \QCBOpt}{\QCBNOpt}%
}
\def\PrepCapArgs{%
  \ifx\QCBOptA\empty
    \ifx\QCTOptA\empty
      {}%
    \else
      \ifx\QCTOptB\empty
        {\QCTOptA}%
      \else
        [\QCTOptB]{\QCTOptA}%
      \fi
    \fi
  \else
    \ifx\QCBOptA\empty
      {}%
    \else
      \ifx\QCBOptB\empty
        {\QCBOptA}%
      \else
        [\QCBOptB]{\QCBOptA}%
      \fi
    \fi
  \fi
}
\def\GRAPHICSPS#1{%
 \ifcase\GRAPHICSTYPE%\GRAPHICSTYPE=0
   \special{ps: #1}%
 \or%\GRAPHICSTYPE=1
   \special{language "PS", include "#1"}%
%%%\or%\GRAPHICSTYPE=2
%%%  #1%
 \fi
}%
\def\graffile#1#2#3#4{%
%%% \ifnum\GRAPHICSTYPE=\tw@
%%%  %Following if using psfig
%%%  \@ifundefined{psfig}{\input psfig.tex}{}%
%%%  \psfig{file=#1, height=#3, width=#2}%
%%% \else
  %Following for all others
  % JCS - added BOXTHEFRAME, see below
    \bgroup
	   \@inlabelfalse
       \leavevmode
       \@ifundefined{bbl@deactivate}{\def~{\string~}}{\activesoff}%
        \raise -#4 \BOXTHEFRAME{%
           \hbox to #2{\raise #3\hbox to #2{\null #1\hfil}}}%
    \egroup
}%
\def\draftbox#1#2#3#4{%
 \leavevmode\raise -#4 \hbox{%
  \frame{\rlap{\protect\tiny #1}\hbox to #2%
   {\vrule height#3 width\z@ depth\z@\hfil}%
  }%
 }%
}%
\let\nographics=\@msidraft
\newif\ifwasdraft
\def\GRAPHIC#1#2#3#4#5{%
   \ifnum\@msidraft=\@ne\draftbox{#2}{#3}{#4}{#5}%
   \else\graffile{#1}{#3}{#4}{#5}%
   \fi
}
\def\addtoLaTeXparams#1{%
    \edef\LaTeXparams{\LaTeXparams #1}}%
\newif\ifBoxFrame \BoxFramefalse
\newif\ifOverFrame \OverFramefalse
\newif\ifUnderFrame \UnderFramefalse
\def\BOXTHEFRAME#1{%
   \hbox{%
      \ifBoxFrame
         \frame{#1}%
      \else
         {#1}%
      \fi
   }%
}
\def\doFRAMEparams#1{\BoxFramefalse\OverFramefalse\UnderFramefalse\readFRAMEparams#1\end}%
\def\readFRAMEparams#1{%
 \ifx#1\end%
  \let\next=\relax
  \else
  \ifx#1i\dispkind=\z@\fi
  \ifx#1d\dispkind=\@ne\fi
  \ifx#1f\dispkind=\tw@\fi
  \ifx#1t\addtoLaTeXparams{t}\fi
  \ifx#1b\addtoLaTeXparams{b}\fi
  \ifx#1p\addtoLaTeXparams{p}\fi
  \ifx#1h\addtoLaTeXparams{h}\fi
  \ifx#1X\BoxFrametrue\fi
  \ifx#1O\OverFrametrue\fi
  \ifx#1U\UnderFrametrue\fi
  \ifx#1w
    \ifnum\@msidraft=1\wasdrafttrue\else\wasdraftfalse\fi
    \@msidraft=\@ne
  \fi
  \let\next=\readFRAMEparams
  \fi
 \next
 }%
\def\IFRAME#1#2#3#4#5#6{%
      \bgroup
      \let\QCTOptA\empty
      \let\QCTOptB\empty
      \let\QCBOptA\empty
      \let\QCBOptB\empty
      #6%
      \parindent=0pt
      \leftskip=0pt
      \rightskip=0pt
      \setbox0=\hbox{\QCBOptA}%
      \@tempdima=#1\relax
      \ifOverFrame
          % Do this later
          \typeout{This is not implemented yet}%
          \show\HELP
      \else
         \ifdim\wd0>\@tempdima
            \advance\@tempdima by \@tempdima
            \ifdim\wd0 >\@tempdima
               \setbox1 =\vbox{%
                  \unskip\hbox to \@tempdima{\hfill\GRAPHIC{#5}{#4}{#1}{#2}{#3}\hfill}%
                  \unskip\hbox to \@tempdima{\parbox[b]{\@tempdima}{\QCBOptA}}%
               }%
               \wd1=\@tempdima
            \else
               \textwidth=\wd0
               \setbox1 =\vbox{%
                 \noindent\hbox to \wd0{\hfill\GRAPHIC{#5}{#4}{#1}{#2}{#3}\hfill}\\%
                 \noindent\hbox{\QCBOptA}%
               }%
               \wd1=\wd0
            \fi
         \else
            \ifdim\wd0>0pt
              \hsize=\@tempdima
              \setbox1=\vbox{%
                \unskip\GRAPHIC{#5}{#4}{#1}{#2}{0pt}%
                \break
                \unskip\hbox to \@tempdima{\hfill \QCBOptA\hfill}%
              }%
              \wd1=\@tempdima
           \else
              \hsize=\@tempdima
              \setbox1=\vbox{%
                \unskip\GRAPHIC{#5}{#4}{#1}{#2}{0pt}%
              }%
              \wd1=\@tempdima
           \fi
         \fi
         \@tempdimb=\ht1
         %\advance\@tempdimb by \dp1
         \advance\@tempdimb by -#2
         \advance\@tempdimb by #3
         \leavevmode
         \raise -\@tempdimb \hbox{\box1}%
      \fi
      \egroup%
}%
\def\DFRAME#1#2#3#4#5{%
  \vspace\topsep
  \hfil\break
  \bgroup
     \leftskip\@flushglue
	 \rightskip\@flushglue
	 \parindent\z@
	 \parfillskip\z@skip
     \let\QCTOptA\empty
     \let\QCTOptB\empty
     \let\QCBOptA\empty
     \let\QCBOptB\empty
	 \vbox\bgroup
        \ifOverFrame 
           #5\QCTOptA\par
        \fi
        \GRAPHIC{#4}{#3}{#1}{#2}{\z@}%
        \ifUnderFrame 
           \break#5\QCBOptA
        \fi
	 \egroup
  \egroup
  \vspace\topsep
  \break
}%
\def\FFRAME#1#2#3#4#5#6#7{%
 %If float.sty loaded and float option is 'h', change to 'H'  (gp) 1998/09/05
  \@ifundefined{floatstyle}
    {%floatstyle undefined (and float.sty not present), no change
     \begin{figure}[#1]%
    }
    {%floatstyle DEFINED
	 \ifx#1h%Only the h parameter, change to H
      \begin{figure}[H]%
	 \else
      \begin{figure}[#1]%
	 \fi
	}
  \let\QCTOptA\empty
  \let\QCTOptB\empty
  \let\QCBOptA\empty
  \let\QCBOptB\empty
  \ifOverFrame
    #4
    \ifx\QCTOptA\empty
    \else
      \ifx\QCTOptB\empty
        \caption{\QCTOptA}%
      \else
        \caption[\QCTOptB]{\QCTOptA}%
      \fi
    \fi
    \ifUnderFrame\else
      \label{#5}%
    \fi
  \else
    \UnderFrametrue%
  \fi
  \begin{center}\GRAPHIC{#7}{#6}{#2}{#3}{\z@}\end{center}%
  \ifUnderFrame
    #4
    \ifx\QCBOptA\empty
      \caption{}%
    \else
      \ifx\QCBOptB\empty
        \caption{\QCBOptA}%
      \else
        \caption[\QCBOptB]{\QCBOptA}%
      \fi
    \fi
    \label{#5}%
  \fi
  \end{figure}%
 }%
\def\makeactives{
  \catcode`\"=\active
  \catcode`\;=\active
  \catcode`\:=\active
  \catcode`\'=\active
  \catcode`\~=\active
}
   \gdef\activesoff{%
      \def"{\string"}%
      \def;{\string;}%
      \def:{\string:}%
      \def'{\string'}%
      \def~{\string~}%
      %\bbl@deactivate{"}%
      %\bbl@deactivate{;}%
      %\bbl@deactivate{:}%
      %\bbl@deactivate{'}%
    }
\def\FRAME#1#2#3#4#5#6#7#8{%
 \bgroup
 \ifnum\@msidraft=\@ne
   \wasdrafttrue
 \else
   \wasdraftfalse%
 \fi
 \def\LaTeXparams{}%
 \dispkind=\z@
 \def\LaTeXparams{}%
 \doFRAMEparams{#1}%
 \ifnum\dispkind=\z@\IFRAME{#2}{#3}{#4}{#7}{#8}{#5}\else
  \ifnum\dispkind=\@ne\DFRAME{#2}{#3}{#7}{#8}{#5}\else
   \ifnum\dispkind=\tw@
    \edef\@tempa{\noexpand\FFRAME{\LaTeXparams}}%
    \@tempa{#2}{#3}{#5}{#6}{#7}{#8}%
    \fi
   \fi
  \fi
  \ifwasdraft\@msidraft=1\else\@msidraft=0\fi{}%
  \egroup
 }%
\def\TEXUX#1{"texux"}
\def\limfunc#1{\mathop{\rm #1}}%
\long\def\QQQ#1#2{%
     \long\expandafter\def\csname#1\endcsname{#2}}%
\long\def\QQA#1#2{}%
\def\QTR#1#2{{\csname#1\endcsname {#2}}}%
\def\EXPAND#1[#2]#3{}%
\def\NOEXPAND#1[#2]#3{}%
\def\LaTeXparent#1{}%
\def\ChildStyles#1{}%
\def\ChildDefaults#1{}%
\def\QTagDef#1#2#3{}%
  \providecommand{\UNICODE}[2][]{\protect\rule{.1in}{.1in}}
  \providecommand{\U}[1]{\protect\rule{.1in}{.1in}}
\def\QQfnmark#1{\footnotemark}
 \def\abstract{%
  \if@twocolumn
   \section*{Abstract (Not appropriate in this style!)}%
   \else \small 
   \begin{center}{\bf Abstract\vspace{-.5em}\vspace{\z@}}\end{center}%
   \quotation 
   \fi
  }%
   \def\registered{\relax\ifmmode{}\r@gistered
                    \else$\m@th\r@gistered$\fi}%
 \def\r@gistered{^{\ooalign
  {\hfil\raise.07ex\hbox{$\scriptstyle\rm\text{R}$}\hfil\crcr
  \mathhexbox20D}}}}{}%
\newdimen\theight
\def\newfmtname{LaTeX2e}
  \DeclareOldFontCommand{\rm}{\normalfont\rmfamily}{\mathrm}
  \DeclareOldFontCommand{\sf}{\normalfont\sffamily}{\mathsf}
  \DeclareOldFontCommand{\tt}{\normalfont\ttfamily}{\mathtt}
  \DeclareOldFontCommand{\bf}{\normalfont\bfseries}{\mathbf}
  \DeclareOldFontCommand{\it}{\normalfont\itshape}{\mathit}
  \DeclareOldFontCommand{\sl}{\normalfont\slshape}{\@nomath\sl}
  \DeclareOldFontCommand{\sc}{\normalfont\scshape}{\@nomath\sc}
\def\alpha{{\Greekmath 010B}}%
\def\beta{{\Greekmath 010C}}%
\def\gamma{{\Greekmath 010D}}%
\def\delta{{\Greekmath 010E}}%
\def\epsilon{{\Greekmath 010F}}%
\def\zeta{{\Greekmath 0110}}%
\def\eta{{\Greekmath 0111}}%
\def\theta{{\Greekmath 0112}}%
\def\iota{{\Greekmath 0113}}%
\def\kappa{{\Greekmath 0114}}%
\def\lambda{{\Greekmath 0115}}%
\def\mu{{\Greekmath 0116}}%
\def\nu{{\Greekmath 0117}}%
\def\xi{{\Greekmath 0118}}%
\def\pi{{\Greekmath 0119}}%
\def\rho{{\Greekmath 011A}}%
\def\sigma{{\Greekmath 011B}}%
\def\tau{{\Greekmath 011C}}%
\def\upsilon{{\Greekmath 011D}}%
\def\phi{{\Greekmath 011E}}%
\def\chi{{\Greekmath 011F}}%
\def\psi{{\Greekmath 0120}}%
\def\omega{{\Greekmath 0121}}%
\def\varepsilon{{\Greekmath 0122}}%
\def\vartheta{{\Greekmath 0123}}%
\def\varpi{{\Greekmath 0124}}%
\def\varrho{{\Greekmath 0125}}%
\def\varsigma{{\Greekmath 0126}}%
\def\varphi{{\Greekmath 0127}}%
\def\nabla{{\Greekmath 0272}}
\def\FindBoldGroup{%
   {\setbox0=\hbox{$\mathbf{x\global\edef\theboldgroup{\the\mathgroup}}$}}%
}
\def\Greekmath#1#2#3#4{%
    \if@compatibility
        \ifnum\mathgroup=\symbold
           \mathchoice{\mbox{\boldmath$\displaystyle\mathchar"#1#2#3#4$}}%
                      {\mbox{\boldmath$\textstyle\mathchar"#1#2#3#4$}}%
                      {\mbox{\boldmath$\scriptstyle\mathchar"#1#2#3#4$}}%
                      {\mbox{\boldmath$\scriptscriptstyle\mathchar"#1#2#3#4$}}%
        \else
           \mathchar"#1#2#3#4% 
        \fi 
    \else 
        \FindBoldGroup
        \ifnum\mathgroup=\theboldgroup % For 2e
           \mathchoice{\mbox{\boldmath$\displaystyle\mathchar"#1#2#3#4$}}%
                      {\mbox{\boldmath$\textstyle\mathchar"#1#2#3#4$}}%
                      {\mbox{\boldmath$\scriptstyle\mathchar"#1#2#3#4$}}%
                      {\mbox{\boldmath$\scriptscriptstyle\mathchar"#1#2#3#4$}}%
        \else
           \mathchar"#1#2#3#4% 
        \fi     	    
	  \fi}
\newif\ifGreekBold  \GreekBoldfalse
\let\SAVEPBF=\pbf
\def\pbf{\GreekBoldtrue\SAVEPBF}%
  \newcounter{equationnumber}  
  \def\mathletters{%
     \addtocounter{equation}{1}
     \edef\@currentlabel{\theequation}%
     \setcounter{equationnumber}{\c@equation}
     \setcounter{equation}{0}%
     \edef\theequation{\@currentlabel\noexpand\alph{equation}}%
  }
    \def\BibTeX{{\rm B\kern-.05em{\sc i\kern-.025em b}\kern-.08em
                 T\kern-.1667em\lower.7ex\hbox{E}\kern-.125emX}}}{}%
\def\AmS{{\protect\usefont{OMS}{cmsy}{m}{n}%
                A\kern-.1667em\lower.5ex\hbox{M}\kern-.125emS}}}{}%
\def\@@eqncr{\let\@tempa\relax
    \ifcase\@eqcnt \def\@tempa{& & &}\or \def\@tempa{& &}%
      \else \def\@tempa{&}\fi
     \@tempa
     \if@eqnsw
        \iftag@
           \@taggnum
        \else
           \@eqnnum\stepcounter{equation}%
        \fi
     \fi
     \global\tag@false
     \global\@eqnswtrue
     \global\@eqcnt\z@\cr}
\def\TCItag{\@ifnextchar*{\@TCItagstar}{\@TCItag}}
\def\@TCItag#1{%
    \global\tag@true
    \global\def\@taggnum{(#1)}}
\def\@TCItagstar*#1{%
    \global\tag@true
    \global\def\@taggnum{#1}}
\def\ExitTCILatex{\makeatother }
\if@compatibility\message{amsmath already loaded}\fi\aftergroup\ExitTCILatex}
\if@compatibility\message{amstex already loaded}\fi\aftergroup\ExitTCILatex}
\if@compatibility\message{amsgen already loaded}\fi\aftergroup\ExitTCILatex}
\let\DOTSI\relax
\def\RIfM@{\relax\ifmmode}%
\def\FN@{\futurelet\next}%
\def\iint{\DOTSI\intno@\tw@\FN@\ints@}%
\def\iiint{\DOTSI\intno@\thr@@\FN@\ints@}%
\def\iiiint{\DOTSI\intno@4 \FN@\ints@}%
\def\idotsint{\DOTSI\intno@\z@\FN@\ints@}%
\def\ints@{\findlimits@\ints@@}%
\newif\iflimtoken@
\newif\iflimits@
\def\findlimits@{\limtoken@true\ifx\next\limits\limits@true
 \else\ifx\next\nolimits\limits@false\else
 \limtoken@false\ifx\ilimits@\nolimits\limits@false\else
 \ifinner\limits@false\else\limits@true\fi\fi\fi\fi}%
\def\multint@{\int\ifnum\intno@=\z@\intdots@                          %1
 \else\intkern@\fi                                                    %2
 \ifnum\intno@>\tw@\int\intkern@\fi                                   %3
 \ifnum\intno@>\thr@@\int\intkern@\fi                                 %4
 \int}%                                                               %5
\def\multintlimits@{\intop\ifnum\intno@=\z@\intdots@\else\intkern@\fi
 \ifnum\intno@>\tw@\intop\intkern@\fi
 \ifnum\intno@>\thr@@\intop\intkern@\fi\intop}%
\def\intic@{%
    \mathchoice{\hskip.5em}{\hskip.4em}{\hskip.4em}{\hskip.4em}}%
\def\negintic@{\mathchoice
 {\hskip-.5em}{\hskip-.4em}{\hskip-.4em}{\hskip-.4em}}%
\def\ints@@{\iflimtoken@                                              %1
 \def\ints@@@{\iflimits@\negintic@
   \mathop{\intic@\multintlimits@}\limits                             %2
  \else\multint@\nolimits\fi                                          %3
  \eat@}%                                                             %4
 \else                                                                %5
 \def\ints@@@{\iflimits@\negintic@
  \mathop{\intic@\multintlimits@}\limits\else
  \multint@\nolimits\fi}\fi\ints@@@}%
\def\intkern@{\mathchoice{\!\!\!}{\!\!}{\!\!}{\!\!}}%
\def\plaincdots@{\mathinner{\cdotp\cdotp\cdotp}}%
\def\intdots@{\mathchoice{\plaincdots@}%
 {{\cdotp}\mkern1.5mu{\cdotp}\mkern1.5mu{\cdotp}}%
 {{\cdotp}\mkern1mu{\cdotp}\mkern1mu{\cdotp}}%
 {{\cdotp}\mkern1mu{\cdotp}\mkern1mu{\cdotp}}}%
\def\RIfM@{\relax\protect\ifmmode}
\def\text{\RIfM@\expandafter\text@\else\expandafter\mbox\fi}
\let\nfss@text\text
\def\text@#1{\mathchoice
   {\textdef@\displaystyle\f@size{#1}}%
   {\textdef@\textstyle\tf@size{\firstchoice@false #1}}%
   {\textdef@\textstyle\sf@size{\firstchoice@false #1}}%
   {\textdef@\textstyle \ssf@size{\firstchoice@false #1}}%
   \glb@settings}
\def\textdef@#1#2#3{\hbox{{%
                    \everymath{#1}%
                    \let\f@size#2\selectfont
                    #3}}}
\newif\iffirstchoice@
\def\Let@{\relax\iffalse{\fi\let\\=\cr\iffalse}\fi}%
\def\vspace@{\def\vspace##1{\crcr\noalign{\vskip##1\relax}}}%
\def\multilimits@{\bgroup\vspace@\Let@
 \baselineskip\fontdimen10 \scriptfont\tw@
 \advance\baselineskip\fontdimen12 \scriptfont\tw@
 \lineskip\thr@@\fontdimen8 \scriptfont\thr@@
 \lineskiplimit\lineskip
 \vbox\bgroup\ialign\bgroup\hfil$\m@th\scriptstyle{##}$\hfil\crcr}%
\def\Sb{_\multilimits@}%
\def\endSb{\crcr\egroup\egroup\egroup}%
\def\Sp{^\multilimits@}%
\newdimen\ex@
\def\rightarrowfill@#1{$#1\m@th\mathord-\mkern-6mu\cleaders
 \hbox{$#1\mkern-2mu\mathord-\mkern-2mu$}\hfill
 \mkern-6mu\mathord\rightarrow$}%
\def\leftarrowfill@#1{$#1\m@th\mathord\leftarrow\mkern-6mu\cleaders
 \hbox{$#1\mkern-2mu\mathord-\mkern-2mu$}\hfill\mkern-6mu\mathord-$}%
\def\leftrightarrowfill@#1{$#1\m@th\mathord\leftarrow
\mkern-6mu\cleaders
 \hbox{$#1\mkern-2mu\mathord-\mkern-2mu$}\hfill
 \mkern-6mu\mathord\rightarrow$}%
\def\overrightarrow{\mathpalette\overrightarrow@}%
\def\overrightarrow@#1#2{\vbox{\ialign{##\crcr\rightarrowfill@#1\crcr
 \noalign{\kern-\ex@\nointerlineskip}$\m@th\hfil#1#2\hfil$\crcr}}}%
\def\overleftarrow{\mathpalette\overleftarrow@}%
\def\overleftarrow@#1#2{\vbox{\ialign{##\crcr\leftarrowfill@#1\crcr
 \noalign{\kern-\ex@\nointerlineskip}$\m@th\hfil#1#2\hfil$\crcr}}}%
\def\overleftrightarrow{\mathpalette\overleftrightarrow@}%
\def\overleftrightarrow@#1#2{\vbox{\ialign{##\crcr
   \leftrightarrowfill@#1\crcr
 \noalign{\kern-\ex@\nointerlineskip}$\m@th\hfil#1#2\hfil$\crcr}}}%
\def\underrightarrow{\mathpalette\underrightarrow@}%
\def\underrightarrow@#1#2{\vtop{\ialign{##\crcr$\m@th\hfil#1#2\hfil
  $\crcr\noalign{\nointerlineskip}\rightarrowfill@#1\crcr}}}%
\def\underleftarrow{\mathpalette\underleftarrow@}%
\def\underleftarrow@#1#2{\vtop{\ialign{##\crcr$\m@th\hfil#1#2\hfil
  $\crcr\noalign{\nointerlineskip}\leftarrowfill@#1\crcr}}}%
\def\underleftrightarrow{\mathpalette\underleftrightarrow@}%
\def\underleftrightarrow@#1#2{\vtop{\ialign{##\crcr$\m@th
  \hfil#1#2\hfil$\crcr
 \noalign{\nointerlineskip}\leftrightarrowfill@#1\crcr}}}%
\def\qopnamewl@#1{\mathop{\operator@font#1}\nlimits@}
\let\nlimits@\displaylimits
\def\setboxz@h{\setbox\z@\hbox}
\def\varlim@#1#2{\mathop{\vtop{\ialign{##\crcr
 \hfil$#1\m@th\operator@font lim$\hfil\crcr
 \noalign{\nointerlineskip}#2#1\crcr
 \noalign{\nointerlineskip\kern-\ex@}\crcr}}}}
 \def\rightarrowfill@#1{\m@th\setboxz@h{$#1-$}\ht\z@\z@
  $#1\copy\z@\mkern-6mu\cleaders
  \hbox{$#1\mkern-2mu\box\z@\mkern-2mu$}\hfill
  \mkern-6mu\mathord\rightarrow$}
\def\leftarrowfill@#1{\m@th\setboxz@h{$#1-$}\ht\z@\z@
  $#1\mathord\leftarrow\mkern-6mu\cleaders
  \hbox{$#1\mkern-2mu\copy\z@\mkern-2mu$}\hfill
  \mkern-6mu\box\z@$}
\def\projlim{\qopnamewl@{proj\,lim}}
\def\injlim{\qopnamewl@{inj\,lim}}
\def\varinjlim{\mathpalette\varlim@\rightarrowfill@}
\def\varprojlim{\mathpalette\varlim@\leftarrowfill@}
\def\varliminf{\mathpalette\varliminf@{}}
\def\varliminf@#1{\mathop{\underline{\vrule\@depth.2\ex@\@width\z@
   \hbox{$#1\m@th\operator@font lim$}}}}
\def\varlimsup{\mathpalette\varlimsup@{}}
\def\varlimsup@#1{\mathop{\overline
  {\hbox{$#1\m@th\operator@font lim$}}}}
\def\align{\@verbatim \frenchspacing\@vobeyspaces \@alignverbatim
You are using the "align" environment in a style in which it is not defined.}
\let\csname endalign*\endcsname =\endtrivlist
\def\alignat{\@verbatim \frenchspacing\@vobeyspaces \@alignatverbatim
You are using the "alignat" environment in a style in which it is not defined.}
\let\csname endalignat*\endcsname =\endtrivlist
\def\xalignat{\@verbatim \frenchspacing\@vobeyspaces \@xalignatverbatim
You are using the "xalignat" environment in a style in which it is not defined.}
\let\csname endxalignat*\endcsname =\endtrivlist
\def\gather{\@verbatim \frenchspacing\@vobeyspaces \@gatherverbatim
You are using the "gather" environment in a style in which it is not defined.}
\let\csname endgather*\endcsname =\endtrivlist
\def\multiline{\@verbatim \frenchspacing\@vobeyspaces \@multilineverbatim
You are using the "multiline" environment in a style in which it is not defined.}
\let\csname endmultiline*\endcsname =\endtrivlist
\def\arrax{\@verbatim \frenchspacing\@vobeyspaces \@arraxverbatim
You are using a type of "array" construct that is only allowed in AmS-LaTeX.}
\def\tabulax{\@verbatim \frenchspacing\@vobeyspaces \@tabulaxverbatim
You are using a type of "tabular" construct that is only allowed in AmS-LaTeX.}
\let\csname endarrax*\endcsname =\endtrivlist
\let\csname endtabulax*\endcsname =\endtrivlist
 \def\endequation{%
     \ifmmode\ifinner % FLEQN hack
      \iftag@
        \addtocounter{equation}{-1} % undo the increment made in the begin part
        $\hfil
           \displaywidth\linewidth\@taggnum\egroup \endtrivlist
        \global\tag@false
        \global\@ignoretrue   
      \else
        $\hfil
           \displaywidth\linewidth\@eqnnum\egroup \endtrivlist
        \global\tag@false
        \global\@ignoretrue 
      \fi
     \else   
      \iftag@
        \addtocounter{equation}{-1} % undo the increment made in the begin part
        \eqno \hbox{\@taggnum}
        \global\tag@false%
        $$\global\@ignoretrue
      \else
        \eqno \hbox{\@eqnnum}% $$ BRACE MATCHING HACK
        $$\global\@ignoretrue
      \fi
     \fi\fi
 } 
 \newif\iftag@ \tag@false
 \def\TCItag{\@ifnextchar*{\@TCItagstar}{\@TCItag}}
 \def\@TCItag#1{%
     \global\tag@true
     \global\def\@taggnum{(#1)}}
 \def\@TCItagstar*#1{%
     \global\tag@true
     \global\def\@taggnum{#1}}
     \def\tag{\@ifnextchar*{\@tagstar}{\@tag}}
     \def\@tag#1{%
         \global\tag@true
         \global\def\@taggnum{(#1)}}
     \def\@tagstar*#1{%
         \global\tag@true
         \global\def\@taggnum{#1}}
\title{How to avoid the zero-power trap in testing for correlation}
\author{David Preinerstorfer \\[10pt]
\small{ECARES and SBS-EM} \\[2pt] \small{Universit\'e libre de Bruxelles} \\[2pt]
\small{\href{mailto:david.preinerstorfer@ulb.ac.be}{david.preinerstorfer@ulb.ac.be}
}}
\date{August 2018}
\begin{document}
	
\maketitle

\begin{abstract}
In testing for correlation of the errors in regression models the power of tests can be very low for strongly correlated errors. This counterintuitive phenomenon has become known as the ``zero-power trap''. Despite a considerable amount of literature devoted to this problem, mainly focusing on its detection, a convincing solution has not yet been found. In this article we first discuss theoretical results concerning the occurrence of the zero-power trap phenomenon. Then, we suggest and compare three ways to avoid it. Given an initial test that suffers from the zero-power trap, the method we recommend for practice leads to a modified test whose power converges to one as the correlation gets very strong. Furthermore, the modified test has approximately the same power function as the initial test, and thus approximately preserves all of its optimality properties. We also provide some numerical illustrations in the context of testing for network generated correlation.
\end{abstract}

\section{Introduction}\label{sec:intro}

Testing whether the errors in a regression model are uncorrelated is a standard problem in econometrics. For many forms of correlation under the alternative there are well-established tests available. Two prominent examples are the Durbin-Watson test for serial autocorrelation, and the Cliff-Ord test for spatial autocorrelation. Nevertheless, this type of testing problem is not completely solved, not even in the Gaussian case. This is partly due to the fact that tests for correlation, including the well-established tests mentioned before, do not always behave as they ideally should in finite samples: Whereas the size of most tests can be easily controlled, at least under suitable distributional assumptions such as Gaussianity, their power function can attain very small values in regions of the alternative where the correlation is very strong. This, however, does not match with the intuition that strong correlations should be \emph{easily} detectable from the data, i.e., that  the power of a test for correlation should be close to one if the degree of correlation in the errors is very strong. 

That the power function of a test for correlation can drop to zero as the correlation increases was first formally established in \cite{kramer85}, who considered the power function of the Durbin-Watson test in testing for serial autocorrelation. The results in \cite{kramer85} were extended in later work by \cite{zeisel1989}, \cite{KramerZeisel1990} and \cite{lobus2000}. \cite{kleiber2005} obtained similar results for the Durbin-Watson test when the disturbances are fractionally integrated. \cite{kramer2005finite} proved related results for Cliff-Ord-type tests in case the regression errors are spatially autocorrelated. A unifying general theory that neither relies on the specific form of correlation nor on very special structural properties of the tests was developed recently in \cite{mart10} and \cite{PP17}. We refer the interested reader to the latter articles for formal results and a thorough discussion of the literature. 

The major practical value of the just mentioned articles is of a diagnostic nature: they provide conditions which depend on observable quantities only and which let a user decide whether a particular test is subject to the \emph{zero-power trap}, i.e., whether its power function drops to zero as the correlation increases. This is important, because if it turns out that an initial test is subject to this trap, one may want to use another test. However, one is then confronted with the problem of finding a test that avoids the zero-power trap. One complication is as follows: Typically, the initial test was chosen for a reason, i.e., for its ``optimal'' power properties in certain regions of the parameter space (think of a locally best invariant test). In such situations, one would not just like to use \emph{some} other test that avoids the zero-power trap. Much more likely, one would prefer to \emph{slightly modify} the initial test in such a way that its optimality properties are preserved, at least approximately, but such that its modified version does not suffer from the zero-power trap. Compared to the amount of literature that concentrates on deriving diagnostic tools for detecting the zero-power trap, the attention that has been paid to the question \emph{how} one can construct tests which do not suffer from the zero-power trap is much less. Furthermore, it is not clear how to obtain said ``optimality-preserving'' modifications. The main contribution of the present article is to fill this gap. In the following paragraphs we provide an overview of the article's structure together with a more detailed summary of our contributions.

\smallskip

In Section~\ref{sec:fram} we introduce the framework: the model and the testing problem, some notational conventions and an important class of tests. In Section~\ref{sec:ZPT} we formally define the zero-power trap phenomenon, obtain some sufficient conditions for it from results in \cite{PP17}, and then consider in our general framework the question how often, i.e., for ``how many'' design matrices, the zero-power trap actually arises. We answer this question in Propositions~\ref{prop:smallalpha} and~\ref{prop:mart}. The former proposition proves (and generalizes) an observation already made in the discussion section of \cite{kramer85}. The latter proposition is obtained by generalizing an argument in \cite{martellosio2012testing}, who considered the same question in a spatial autoregressive setting. Essentially, these two propositions show (for the tests based on the specific family of test statistics and the corresponding critical values considered) respectively that~(i)~the zero-power trap arises for generic design matrices (i.e., up to a Lebesgue null set of exceptional matrices) for small enough critical values; and~(ii) for any critical value that leads to a size in~$(0, 1)$ there exists an open set of design matrices for which the zero-power trap arises.

In Section~\ref{sec:avoid} we present three ways to avoid the zero-power trap: 
In Section~\ref{sec:ee} we briefly discuss a test for which \cite{PP17} have shown that it does not suffer from the zero-power trap. This test typically does not have very favorable power properties, apart from the fact that it avoids the zero-power trap. We shall mainly use it later as a building block in our construction of ``optimality-preserving'' tests. In Section~\ref{sec:artreg} we discuss tests that incorporate artificial regressors to avoid the zero-power trap. The suggestion of adding artificial regressors to the regression and to use ``optimal'' tests in this expanded model is present already in \cite{kramer85}, who observed numerically that adding the intercept to a regression without intercept helps to avoid the zero-power trap for the Durbin-Watson test. Our theoretical results in Section~\ref{sec:artreg} exploit results in \cite{PP17}, and are related to the methods in \cite{PP13} and \cite{preinerstorfer2017}, who considered the construction of tests with good size and power properties for testing restrictions on the regression coefficient vector. While the tests in Section~\ref{sec:artreg} are ``optimality-preserving'' to some extent (more specifically they often have the same optimality property as initial tests, but within a smaller class of tests), it turns out that this solution to the zero-power trap is not ideal. For example, the power function of these tests does not increase to one as the strength of the correlation increases (which is the case for the approach outlined in Section~\ref{sec:ee}). 

In Section~\ref{subs:approx} we construct optimality-preserving modifications avoiding the zero-power-trap out of an initial test that suffers from the zero-power trap. Our approach overcomes the limitations of the approaches discussed in Sections~\ref{sec:ee} and~\ref{sec:artreg}. In particular, our method leads to tests that have approximately the same power properties as the initial test. Furthermore, their power converges to one as the strength of the correlation increases. The construction is inspired by the power enhancement principle of \cite{fan2015power} in the formulation used in Section~3 of \cite{kock2017power}. The basic idea of this principle is to improve the asymptotic power of an initial test by using another test, a power enhancement component, which has better asymptotic power properties than the initial test in certain regions of the alternative. Since the theory in \cite{fan2015power} and \cite{kock2017power} is asymptotic, and \emph{the present article is concerned exclusively with finite sample properties}, their results do not apply here. Nevertheless, we can adapt the underlying heuristic to our context: given an initial test that suffers from the zero-power trap, but has favorable power properties in other regions of the alternative, we ``combine'' this initial test with the test from Section~\ref{sec:ee} to obtain an ``enhanced'' test. 

In Section~\ref{sec:num} we compare the approaches for avoiding the zero-power trap discussed in Section~\ref{sec:avoid} numerically. We reconsider an example in \cite{kramer2005finite} in which the Cliff-Ord test turns out to suffer from the zero-power trap. Section~\ref{sec:concl} concludes. All proofs are collected in Appendices~\ref{app:intro}-\ref{app:avoid}.

\section{Framework}\label{sec:fram}

In the present section we introduce the model, the testing problem and some notation, and we discuss an important class of tests. Most of the notational conventions and terminology we use are standard, and coincide to a large extent with the ones in \cite{PP17}. We repeat them here for the convenience of the reader.

\subsection{Model and testing problem}\label{Model}

We consider the linear model
\begin{equation}\label{linmod}
\mathbf{y}=X\beta +\mathbf{u},  
\end{equation}%
where~$X\in \mathbb{R}^{n\times k}$ is a non-stochastic matrix of rank~$k$
with~$0< k<n$, and where~$\beta \in \mathbb{R}^{k}$ is the regression coefficient vector. The disturbance vector~$\mathbf{u}$ is
assumed to be Gaussian with mean zero and covariance
matrix~$\sigma ^{2}\Sigma (\rho )$. Here~$\Sigma (.)$ is a \emph{known}
function from~$[0,a)$ to the set of symmetric and positive definite~$n\times
n$ matrices, and~$a$ is a prespecified positive real number.  Without loss of generality we assume throughout that~$\Sigma (0)$ equals the identity matrix~$I_{n}$. The parameters~$\beta \in \R^k$,~$\sigma \in (0, \infty)$ and~$\rho \in \lbrack 0,a)$ are unknown.  

The Gaussianity assumption could be relaxed considerably. It is imposed mainly to avoid technical conditions that do not deliver deeper insights into the problem. For example, we could replace the Gaussianity assumption by the assumption that the distribution of the error vector~$\mathbf{u}$ is elliptically symmetric without changing any of our results. This and other generalizations are discussed in detail in Section~3 of \cite{PP17}.

Denoting the Gaussian probability measure with mean~$X\beta$ and covariance matrix~$\sigma^2 \Sigma(\rho)$ by~$P_{\beta, \sigma, \rho}$, we see that the model \eqref{linmod} induces the parametric family of distributions 
\begin{equation}
\left\{ P_{\beta ,\sigma ,\rho }:\beta \in \mathbb{R}%
^{k},~\sigma \in (0, \infty),~\rho \in \lbrack 0,a)\right\}  \label{family}
\end{equation}%
on the sample space~$\mathbb{R}^{n}$ equipped with its Borel~$\sigma$-algebra. The expectation operator with respect to (w.r.t.)~$P_{\beta ,\sigma ,\rho }$ will be denoted by~$E_{\beta ,\sigma ,\rho }$. Note that the set of probability measures in the previous display is dominated by Lebesgue measure~$\mu_{\R^n}$ on the Borel sets of~$\R^n$, because~$\Sigma(\rho)$ is positive definite for every~$\rho \in [0, a)$ by assumption.

In the family of distributions~\eqref{family} we are interested in the testing problem~$\rho = 0$ against~$\rho > 0$. More precisely, the testing problem is
\begin{equation}\label{testproblem}
H_{0}:\rho =0,~\beta \in \mathbb{R}^{k},~0<\sigma <\infty \quad \text{ against } \quad
H_{1}:\rho >0,~\beta \in \mathbb{R}^{k},~0<\sigma <\infty ,
\end{equation}%
with the implicit understanding that always~$\rho \in \lbrack 0,a)$. In this testing problem the parameter~$\rho$ is the target of inference, and the regression coefficient vector~$\beta$ and the parameter~$\sigma$ are nuisance parameters. 

Two specific examples that received a considerable amount of attention in the econometrics literature and which fit into the above framework are testing for positive serial autocorrelation and testing for spatial autocorrelation, cf.~Examples~2.1 and~2.2 in \cite{PP17} for details and a discussion of related literature. See also Section~\ref{sec:num} below for more information on testing for spatial autocorrelation and related numerical results.

\subsection{Notation, invariance and an important class of tests} \label{ss:notation}

\subsubsection{Notation}\label{sec:notation}  

All matrices we shall consider are real matrices, the transpose of a matrix~$A$ is denoted by~$A^{\prime }$, and the space spanned by the columns of~$A$ is denoted by~$%
\limfunc{span}(A)$. Given a linear subspace~$L$ of~$\mathbb{R}^{n}$, the
symbol~$\Pi _{L}$ denotes the orthogonal projection onto~$L$, and~$L^{\bot }$
denotes the orthogonal complement of~$L$. Given an~$n\times m$ matrix~$Z$ of
rank~$m$ with~$0\leq m<n$, we denote by~$C_{Z}$ a matrix in~$\mathbb{R}%
^{(n-m)\times n}$ such that~$C_{Z}C_{Z}^{\prime }=I_{n-m}$ and~$%
C_{Z}^{\prime }C_{Z}=\Pi _{\limfunc{span}(Z)^{\bot }}$ where~$I_{r}$ denotes
the identity matrix of dimension~$r$. We observe that every matrix
whose rows form an orthonormal basis of~$\limfunc{span}(Z)^{\bot }$
satisfies these two conditions and vice versa. Hence, any two choices for 
$C_{Z}$ are related by premultiplication by an orthogonal matrix. Let~$l$ be
a positive integer. If~$A$ is an~$l\times l$ matrix and~$\lambda \in \mathbb{%
R}$ is an eigenvalue of~$A$ we denote the corresponding eigenspace by~$%
\limfunc{Eig}\left( A,\lambda \right)$. The eigenvalues of a symmetric
matrix~$B\in \mathbb{R}^{l\times l}$ ordered from smallest to largest and
counted with their multiplicities are denoted by~$\lambda _{1}(B),\ldots
,\lambda _{l}(B)$. We shall sometimes denote~$\lambda _{1}(B)$ by~$\lambda_{\min}(B)$, and~$\lambda _{l}(B)$ by~$\lambda_{\max}(B)$. Lebesgue measure on the Borel~$\sigma$-algebra of~$\R^{n \times l}$ shall be denoted by~$\mu _{\R^{n \times l}}$, and~$\text{Pr}$ is used as a generic symbol for a probability measure. The Euclidean norm of a vector is denoted by~$\|.\|$, a symbol that is also used to denote a matrix norm. 

\subsubsection{Invariance, an important class of tests, and size-controlling critical values}\label{sec:tests}

Given a matrix~$Z\in \mathbb{R}^{n\times m}$ with column rank~$m$ and where~$1\leq m<n$, define the group of bijective transformations (the group action being composition of functions)%
\begin{equation*}
G_{Z}:=\left\{ g_{\gamma ,\theta }:\gamma \in \mathbb{R}\backslash \left\{
0\right\} ,\theta \in \mathbb{R}^{m}\right\} ,
\end{equation*}%
where~$g_{\gamma ,\theta }: \R^n \to \R^n$ denotes the function~$y\mapsto \gamma y+Z\theta$.

Under our distributional assumptions (and if additionally all parameters of the model are identifiable) the testing problem in Equation~\eqref{testproblem} is invariant w.r.t.~the group~$G_X$ (cf.~Section~6 in \cite{lehmannromano}). It thus appears reasonable to consider tests that are~$G_X$-invariant, a property shared by most commonly used tests. Recall that a function~$f$ defined on the sample space (e.g., a test or a test statistic) is called invariant w.r.t.~$G_X$ if and only if for every~$y \in \R^n$ and every~$g_{\gamma, \theta} \in G_X$ it holds that~$f(y) = f(g_{\gamma, \theta}(y))$. A subset~$A$ of~$\R^n$ will be called invariant w.r.t.~$G_X$ if the indicator function~$\mathbf{1}_A$ is~$G_X$-invariant.

In addition to being~$G_X$-invariant, most tests for~\eqref{testproblem} used in practice are non-randomized, i.e., they are indicator functions of Borel sets -- their corresponding rejection regions. An important class of such tests is based on rejection regions of the form 
\begin{equation}
\Phi _{B,c }=\Phi _{B,C_{X}, c }=\left\{ y\in \mathbb{R}%
^{n}:T_{B}\left( y\right) > c \right\} ,  \label{quadratic}
\end{equation}
where~$c \in \R$ is a critical value and the test statistic
\begin{equation}\label{T_quadratic}
T_{B}\left( y\right) =T_{B,C_{X}}\left( y\right) =
\begin{cases}
y^{\prime }C_{X}^{\prime }BC_{X}y/\Vert {C_{X}y}\Vert ^{2} & \text{if } y\notin \limfunc{span}(X) \\[4pt] 
\lambda _{1}(B) & \text{if } y\in \limfunc{span}(X).
\end{cases}
\end{equation}
Here~$B\in \mathbb{R}^{(n-k)\times (n-k)}$ is a symmetric matrix,
which typically depends on~$X$ and the function~$\Sigma$. Recall that the matrix~${C_{X}}$
satisfies~${C_{X}C}^{\prime }{_{X}=I}_{n-k}$ and~${C}^{\prime }{%
_{X}C_{X}=\Pi }_{\text{$\limfunc{span}$}(X)^{\bot }}$ (cf.~Section~\ref{sec:notation}). Clearly, the test statistic~$T_{B}$ is~$G_{X}$-invariant.  Note furthermore that in case~$\lambda_1(B) = \lambda_{n-k}(B)$ the test statistic~$T_B$ is constant everywhere on~$\R^n$. Therefore, such a choice of~$B$ is uninteresting for practical purposes. Note also that assigning the value~$\lambda_1(B)$ (instead of any other value) to the test statistic on~$\limfunc{span}(X)$ has no effect on rejection probabilities, because~$P_{\beta, \sigma, \rho}$ is absolutely continuous w.r.t.~$\mu_{\R^n}$ for every~$\beta \in \R^k$,~$\sigma \in (0, \infty)$ and~$\rho \in [0, a)$, and~$\limfunc{span}(X)$ being of dimension~$k < n$ implies~$\mu_{\R^n}(\limfunc{span}(X)) = 0$.

The following remark discusses two particularly important choices of~$B$:

\begin{remark}\label{rem:opttest}
Under regularity conditions and excluding degenerate cases, point-optimal invariant (w.r.t.~$G_X$) tests and locally best invariant (w.r.t.~$G_X$) tests for the testing problem~\eqref{testproblem} reject for large values of a test statistic~$T_B$ as in Equation~\eqref{T_quadratic}:
\begin{enumerate}[(a)]
\item Point-optimal invariant tests against the alternative~$\bar{\rho} \in (0, a)$ are obtained for~$B=-\left( C_{X}\Sigma (\bar{\rho})C_{X}^{\prime }\right)^{-1}$.
\item Locally best invariant tests are obtained for~$B=C_{X}%
\dot{\Sigma}(0)C_{X}^{\prime }$, for~$\dot{\Sigma}(0)$
the derivative of~$\Sigma$ at~$\rho =0$, ensured to exist under the aforementioned regularity conditions, see, e.g., \cite{KingHillier1985}. 
\end{enumerate}
Note that a test statistic~$T_B$ based on any of the two matrices~$B$ in the preceding enumeration does \emph{not} depend on the specific choice of~$C_X$, as any two choices of~$C_X$ differ only by premultiplication of an orthogonal matrix. However, for matrices~$B$ of a different form than~(a) or~(b) the test statistic~$T_{B}$ may also depend on the choice of~${C_{X}}$, a dependence which is typically suppressed in our notation.
\end{remark}

The main focus of the present article concerns power properties of tests based on a test statistic as in~\eqref{T_quadratic} for the testing problem~\eqref{testproblem}. Before investigating power properties of a test, one needs to ensure that its size does not exceed a given value of significance~$\alpha$. While this can be a nontrivial problem in general, achieving size control through the choice of a proper critical value turns out to be an easy task here. More specifically, the following lemma shows that exact size control for tests based on a test statistic~$T_B$ introduced in Equation~\eqref{T_quadratic} is possible at all levels of significance in the leading case~$\lambda_1(B) < \lambda_{n-k}(B)$. The subsequent remark discusses numerical aspects.

\begin{lemma}\label{lem:sizecontrol}
Let~$B \in \R^{(n-k)\times(n-k)}$ be symmetric and such that~$\lambda_1(B) < \lambda_{n-k}(B)$. Then, there exists a (unique) function~$\kappa: [0, 1] \to [\lambda_1(B), \lambda_{n-k}(B)]$ such that for every~$\alpha \in [0, 1]$
\begin{equation}
P_{\beta, \sigma, 0}\left(\Phi_{B, \kappa(\alpha)}\right) = \alpha \quad \text{for every } \beta \in \R^k \text{ and every } \sigma \in (0, \infty).
\end{equation}
Furthermore,~$\kappa$ is a strictly decreasing and continuous bijection.
\end{lemma}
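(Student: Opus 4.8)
The plan is to reduce everything to a statement about the distribution of the scalar statistic $T_B(y)$ under the null. First I would observe that because $T_B$ is $G_X$-invariant, and because under $P_{\beta,\sigma,0}$ the vector $C_X\mathbf{y}$ has the $N(0,\sigma^2 I_{n-k})$ distribution (using $C_X X = 0$, $C_X C_X' = I_{n-k}$, and $\Sigma(0)=I_n$), the null distribution of $T_B(\mathbf{y})$ does not depend on $\beta$ or $\sigma$ at all: indeed $T_B(\mathbf{y}) = z'Bz/\|z\|^2$ where $z = C_X\mathbf{y}/\sigma \sim N(0,I_{n-k})$ off the null set $\{\mathbf{y}\in\operatorname{span}(X)\}$, which has $P_{\beta,\sigma,0}$-probability zero. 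Writing $z = \|z\| w$ with $w$ uniform on the unit sphere $S^{n-k-1}$ and independent of $\|z\|$, we get $T_B(\mathbf{y}) \stackrel{d}{=} w'Bw$, a Rayleigh-quotient-type variable supported on $[\lambda_1(B),\lambda_{n-k}(B)]$. Hence there is a single distribution function $F_B(t) := \Pr(w'Bw \le t)$ and the rejection probability is $P_{\beta,\sigma,0}(\Phi_{B,c}) = 1 - F_B(c)$ for every $\beta,\sigma$.

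The next step is to establish the regularity of $F_B$. The key point is that the distribution of $w'Bw$ for $w$ uniform on $S^{n-k-1}$ is \emph{absolutely continuous with respect to Lebesgue measure and has a density that is strictly positive on the open interval} $(\lambda_1(B),\lambda_{n-k}(B))$. This I would get by diagonalizing $B = Q'\operatorname{diag}(\lambda_1,\dots,\lambda_{n-k})Q$, so that $w'Bw \stackrel{d}{=} \sum_i \lambda_i w_i^2$ where $(w_1^2,\dots,w_{n-k}^2)$ is Dirichlet$(1/2,\dots,1/2)$-distributed; since $\lambda_1(B)<\lambda_{n-k}(B)$, not all $\lambda_i$ coincide, and a standard argument (e.g.\ conditioning on all but two coordinates, or a direct change of variables) shows this convex combination has a density which is positive throughout the interior of its range. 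Given this, $F_B$ is continuous on all of $\R$, equals $0$ on $(-\infty,\lambda_1(B)]$, equals $1$ on $[\lambda_{n-k}(B),\infty)$, and is \emph{strictly} increasing on $[\lambda_1(B),\lambda_{n-k}(B)]$.

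Now I can define $\kappa$. For $\alpha\in[0,1]$ set $\kappa(\alpha) := F_B^{-1}(1-\alpha)$, the inverse of the restriction of $F_B$ to $[\lambda_1(B),\lambda_{n-k}(B)]$, which is a well-defined bijection of $[0,1]$ onto $[\lambda_1(B),\lambda_{n-k}(B)]$ precisely because that restriction is a continuous strictly increasing bijection onto $[0,1]$. Then $P_{\beta,\sigma,0}(\Phi_{B,\kappa(\alpha)}) = 1 - F_B(\kappa(\alpha)) = 1-(1-\alpha) = \alpha$ for every $\beta\in\R^k$ and $\sigma\in(0,\infty)$, as required. Since $\kappa$ is the inverse of a continuous strictly increasing bijection composed with $\alpha\mapsto 1-\alpha$, it is itself a continuous, strictly \emph{decreasing} bijection. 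Uniqueness follows because any $\kappa$ with the stated property must satisfy $F_B(\kappa(\alpha)) = 1-\alpha$, and $F_B$ is injective on $[\lambda_1(B),\lambda_{n-k}(B)]$.

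The main obstacle, and the only part requiring genuine care, is the strict positivity of the density of $\sum_i\lambda_i w_i^2$ on the open interval $(\lambda_1(B),\lambda_{n-k}(B))$ — equivalently, that $F_B$ is \emph{strictly} (not just weakly) increasing there; everything else is routine. One must rule out the possibility that $F_B$ has a flat stretch in the interior, which could a priori happen if the spectrum of $B$ were very lopsided. The cleanest route is probably: pick indices $i,j$ with $\lambda_i = \lambda_1(B)$ and $\lambda_j=\lambda_{n-k}(B)$, condition on the values $\{w_\ell^2 : \ell \ne i,j\}$ and on $s := w_i^2 + w_j^2 > 0$ (which occurs with positive probability and whose conditional law has full support), and note that conditionally $w'Bw$ is an affine, strictly monotone function of $w_i^2\in(0,s)$, whose conditional law is Beta$(1/2,1/2)$ rescaled to $(0,s)$ and hence has a density positive on that whole subinterval; integrating out recovers positivity of the unconditional density on the full open interval. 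I would also note in passing that the endpoint values $\kappa(0)=\lambda_{n-k}(B)$ and $\kappa(1)=\lambda_1(B)$ are consistent with the convention $T_B = \lambda_1(B)$ on $\operatorname{span}(X)$ having probability zero, so no boundary ambiguity arises.
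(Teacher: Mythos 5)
Your proof is correct, and it takes a genuinely different route from the paper's at the only step that requires real work. The paper obtains the three key facts about the null cdf $F$ of $T_B$ --- continuity, the boundary values $F(\lambda_1(B))=0$ and $F(\lambda_{n-k}(B))=1$, and strict monotonicity on $[\lambda_1(B),\lambda_{n-k}(B)]$ --- by a single citation to Lemma~B.4 in \cite{PP17}, and then defines $\kappa$ as the inverse of $c\mapsto 1-F(c)$, exactly as you do. You instead prove those regularity facts from scratch: reduce by $G_X$-invariance to $P_{0,1,0}$, pass to the Rayleigh-quotient representation $w'Bw$ with $w$ uniform on the sphere, diagonalize to get $\sum_i\lambda_i w_i^2$ with $(w_i^2)$ Dirichlet$(1/2,\dots,1/2)$, and then show the density is strictly positive on the open interval by conditioning on the coordinates other than a pair $(i,j)$ hitting the extreme eigenvalues, so that conditionally the statistic is a strictly monotone affine function of a rescaled Beta$(1/2,1/2)$ variable. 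This buys self-containment at the cost of length; the paper's route is terse but relies on the cited companion result. One small presentational point: absolute continuity with a density alone does not immediately give strict increase of $F$ across the \emph{closed} interval --- what you actually use (and correctly argue) is that the density is strictly positive throughout the open interval together with the absence of atoms at the endpoints, which follows because the eigenspaces of $\lambda_1(B)$ and $\lambda_{n-k}(B)$ are proper subspaces of $\R^{n-k}$ when $\lambda_1(B)<\lambda_{n-k}(B)$; it would be worth saying that explicitly rather than folding it into ``supported on $[\lambda_1(B),\lambda_{n-k}(B)]$''.
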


\begin{remark}\label{rem:numericcv}
The rejection probabilities of a~$G_X$-invariant test for \eqref{testproblem} do not depend on the parameters~$\beta$ and~$\sigma$ (cf.~Remark~2.3 in \cite{PP17}). As a consequence, the exact critical value~$\kappa(\alpha)$ from Lemma~\ref{lem:sizecontrol} can easily be obtained numerically: To this end one can exploit the well-known fact that for every~$c \in \R$ the rejection probability~$P_{\beta, \sigma, 0}(\Phi_{B, c}) = P_{0, 1, 0}(\Phi_{B, c})$ can be rewritten as the probability that the quadratic form
\begin{equation}
\mathbf{G}' \left[ B - c I_{n-k}   \right] \mathbf{G} > 0,
\end{equation}
where~$\mathbf{G}$ is an~$(n-k)$-variate Gaussian random vector with mean zero and covariance matrix~$I_{n-k}$. This probability can be determined efficiently through an application of standard algorithms, e.g., the algorithm by \cite{davies1980algorithm}. The critical value~$\kappa(\alpha)$ can then be obtained numerically by simply using a root-finding algorithm to determine the unique root~$\kappa(\alpha)$ of~$c \mapsto P_{0, 1, 0}(\Phi_{B, c}) - \alpha$ on~$[\lambda_1(B), \lambda_{n-k}(B)]$.
\end{remark}

\section{The zero-power trap in testing for correlation} \label{sec:ZPT}

\subsection{Definition and sufficient conditions}

In the sequel, a test~$\varphi:\R^n \to [0, 1]$ (measurable) for testing problem~\eqref{testproblem} is said to be subject to (or suffer from) the zero-power trap, if there exist~$\beta \in \R^k$ and~$\sigma \in (0, \infty)$ such that
\begin{equation}\label{ZPT}
\liminf_{\rho \to a} E_{\beta, \sigma, \rho}(\varphi) = 0;
\end{equation}
that is, if the power function of~$\varphi$ can get arbitrarily close to~$0$ as the strength of the correlation in the data, measured in terms of~$\rho$, increases. Recall from Remark~\ref{rem:numericcv} that if~$\varphi$ is~$G_X$-invariant, which is the case for most tests considered in this article, then~$E_{\beta, \sigma, \rho}(\varphi)$ does not depend on~$\beta$ and~$\sigma$. In this case, if Equation~\eqref{ZPT} holds for some~$\beta \in \R^k$ and some~$\sigma \in (0, \infty)$, it holds for every~$\beta \in \R^k$ and every~$0 < \sigma < \infty$. 

\smallskip

A set of sufficient conditions that allows one to conclude whether a test~$\varphi$ is subject to the zero-power trap was developed in \cite{mart10} and \cite{PP17}. The underlying effect leading to \eqref{ZPT} described in the latter article is a concentration effect in the (rescaled) distribution~$P_{\beta, \sigma, \rho}$ when~$\rho$ is close to~$a$. \cite{PP17} obtained their sufficient conditions under the following property of the function~$\Sigma$ (cf.~also Assumption~1 in \cite{PP17} and the discussion there showing that this condition is weaker than the one previously used by \cite{mart10}):
\begin{assumption}\label{ASC}
$\lambda_n^{-1}(\Sigma(\rho)) \Sigma(\rho) \to ee'$ as~$\rho \to a$ for some~$e \in \R^n$.
\end{assumption}
%
%Essentially, the results \cite{PP17} we shall build on in this section are based on the following heuristic: under the preceding assumption the image measures $P_{\beta, \sigma, \rho} \circ g$ concentrate on the one-dimensional subspace~$\limfunc{span}(e)$ of~$\R^n$ as~$\rho \to a$ for a suitable rescaling $g \in G_X$. Consequently, the relative position of the one-dimensional space~$\limfunc{span}(e)$ w.r.t.~the interior of the set~$\{y \in \R^n: \varphi(y) = 0\}$ determines whether or not the zero-power trap occurs for a $G_X$-invariant test~$\varphi$. In particular, in case~$e$ is an interior point of~$\{y \in \R^n: \varphi(y) = 0\}$, the test~$\varphi$ suffers from the zero-power trap. 
For the convenience of the reader and for later use, we shall now formally state two immediate consequences of results in \cite{PP17}. They provide sufficient conditions for the zero-power trap under Assumption~\ref{ASC}. Specializing Theorem~2.7 and Remark~2.8 in \cite{PP17} one obtains the following ``high-level''-result.

\begin{theorem}\label{thm:ZPThl}
Suppose Assumption~\ref{ASC} holds. Let~$\varphi$ be a~$G_X$-invariant test that is continuous at~$e$ and satisfies~$\varphi(e) = 0$, where~$e$ is the vector figuring in Assumption~\ref{ASC}. Then 
\begin{equation}\label{eqn:ZPTPP}
\lim_{\rho \to a} E_{\beta, \sigma, \rho}(\varphi) = 0 \quad \text{ for every } \beta \in \R^k \text{ and every } \sigma \in (0, \infty).
\end{equation}
In particular, if~$\varphi = \mathbf{1}_W$ holds for some~$G_X$-invariant Borel set~$W \subseteq \R^n$, then~\eqref{eqn:ZPTPP} holds if~$e$ is not in the closure of~$W$.
\end{theorem}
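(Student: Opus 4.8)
The plan is to exploit two structural features of a $G_X$-invariant test: its rejection probability does not depend on $(\beta,\sigma)$, and it is scale invariant. First I would reduce to $\beta=0$, $\sigma=1$: writing $\mathbf{y}=X\beta+\sigma\Sigma(\rho)^{1/2}\mathbf{z}$ with $\mathbf{z}\sim N(0,I_n)$, invariance of $\varphi$ under $g_{\sigma,\beta}\in G_X$ gives $\varphi(\mathbf{y})=\varphi(\Sigma(\rho)^{1/2}\mathbf{z})$, hence $E_{\beta,\sigma,\rho}(\varphi)=E[\varphi(\Sigma(\rho)^{1/2}\mathbf{z})]=:h(\rho)$ is independent of $\beta$ and $\sigma$ (cf.~Remark~2.3 in \cite{PP17}), so it suffices to show $h(\rho)\to0$. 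Next, since $\lambda_n(\Sigma(\rho))>0$ and $\varphi(\gamma y)=\varphi(y)$ for every $\gamma\neq0$, I would rescale: setting $\lambda_n(\rho):=\lambda_n(\Sigma(\rho))$ and $W_\rho:=\lambda_n(\rho)^{-1/2}\Sigma(\rho)^{1/2}\mathbf{z}$, we have $h(\rho)=E[\varphi(W_\rho)]$ with $W_\rho\sim N\!\big(0,\lambda_n(\rho)^{-1}\Sigma(\rho)\big)$. This realizes all the laws $P_{0,1,\rho}$ on a single probability space through the one vector $\mathbf{z}$.

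The heart of the argument is the almost sure convergence of $W_\rho$ as $\rho\to a$. By Assumption~\ref{ASC} we have $\lambda_n(\rho)^{-1}\Sigma(\rho)\to ee'$; taking traces, and noting $\lambda_n(\rho)^{-1}\sum_{i=1}^{n}\lambda_i(\Sigma(\rho))\geq1$, yields $\|e\|^2=\limfunc{tr}(ee')\geq1$, so $e\neq0$ and the symmetric positive semidefinite square root of $ee'$ is $\|e\|^{-1}ee'$. Since the map $A\mapsto A^{1/2}$ is continuous on the cone of symmetric positive semidefinite matrices, $\lambda_n(\rho)^{-1/2}\Sigma(\rho)^{1/2}=\big(\lambda_n(\rho)^{-1}\Sigma(\rho)\big)^{1/2}\to\|e\|^{-1}ee'$, and therefore $W_\rho\to\|e\|^{-1}(e'\mathbf{z})\,e=:\xi e$ pointwise, where $\xi:=\|e\|^{-1}e'\mathbf{z}\sim N(0,1)$. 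On the probability-one event $\{\xi\neq0\}$ we then have $\xi^{-1}W_\rho\to e$, and by scale invariance $\varphi(W_\rho)=\varphi(\xi^{-1}W_\rho)$; continuity of $\varphi$ at $e$ together with $\varphi(e)=0$ forces $\varphi(W_\rho)\to0$ almost surely. Since $0\leq\varphi\leq1$, bounded convergence gives $h(\rho_n)\to0$ along every sequence $\rho_n\uparrow a$, hence $\lim_{\rho\to a}E_{\beta,\sigma,\rho}(\varphi)=0$, which is \eqref{eqn:ZPTPP}. The last sentence of the statement is then immediate: if $\varphi=\mathbf{1}_W$ with $W$ Borel and $G_X$-invariant and $e\notin\overline{W}$, then $\varphi$ vanishes on the open neighbourhood $\R^n\setminus\overline{W}$ of $e$, so it is continuous at $e$ with $\varphi(e)=0$, and the part just proven applies.

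The only genuine obstacle I anticipate is identifying the limiting object, i.e.\ justifying $\lambda_n(\rho)^{-1/2}\Sigma(\rho)^{1/2}\to\|e\|^{-1}ee'$ and checking $e\neq0$ (needed both so that $\xi$ is nondegenerate and so that the rescaling by $\xi^{-1}$ is legitimate on a full-measure event); everything else is invariance bookkeeping plus a bounded-convergence step. One could instead avoid the explicit coupling: $W_\rho$ converges weakly to $\xi e$, and scale invariance propagates continuity of $\varphi$ from $e$ to every point of $\limfunc{span}(e)\setminus\{0\}$, a set carrying full mass under the law of $\xi e$, so the portmanteau theorem gives $E[\varphi(W_\rho)]\to E[\varphi(\xi e)]=0$ directly; the coupling route seems cleaner since it avoids tracking the discontinuity set of $\varphi$. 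Finally, as the statement is a specialization of Theorem~2.7 and Remark~2.8 in \cite{PP17}, a further alternative is simply to verify those results' hypotheses and quote them.
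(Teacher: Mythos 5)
Your proposal is correct, and it takes a genuinely different route from the paper's own proof, which is a one-liner: the paper simply verifies the hypotheses of Theorem~2.7 and Remark~2.8(i) in \cite{PP17} (their Assumption~1 is our Assumption~\ref{ASC}; their Assumption~3 is Gaussianity; their Assumption~2 follows from their Proposition~2.6) and quotes those results. Your argument instead reconstructs the underlying concentration effect from scratch. You couple all the laws $P_{0,1,\rho}$ through one $\mathbf{z}\sim N(0,I_n)$, use $G_X$-invariance twice (first to eliminate $\beta,\sigma$, then to rescale by $\lambda_n(\Sigma(\rho))^{-1/2}$), invoke continuity of the symmetric p.s.d.\ square root and Assumption~\ref{ASC} to get $W_\rho\to\xi e$ almost surely with $\xi\sim N(0,1)$, perform a final random rescaling by $\xi^{-1}$ on the full-measure event $\{\xi\neq0\}$ to land at $e$, and close via continuity of $\varphi$ at $e$, $\varphi(e)=0$, and bounded convergence; the $\|e\|\geq1$ check via traces correctly guards against the degenerate $e=0$ case. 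This is more work than the citation but is self-contained and makes the mechanism behind the zero-power trap explicit. Your sketched weak-convergence alternative (scale invariance propagates continuity at $e$ to all of $\limfunc{span}(e)\setminus\{0\}$, which carries full mass under the law of $\xi e$, and the a.e.\ continuous mapping theorem then applies) is also sound, though as you note the coupling route is cleaner since it needs continuity at the single point $e$ only.
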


For the test with rejection region~$\Phi_{B, \kappa(\alpha)}$ as discussed in Section~\ref{sec:tests} and where~$\kappa(\alpha)$ is defined through Lemma~\ref{lem:sizecontrol} one obtains the following result from Corollary~2.21 %and Proposition~2.26 
of \cite{PP17}.

\begin{theorem}\label{lem:zptphiPP17}
Suppose Assumption~\ref{ASC} holds and~$e \notin \limfunc{span}(X)$, where~$e$ is the vector figuring in Assumption~\ref{ASC}. Let~$B \in \R^{(n-k)\times(n-k)}$ be symmetric and such that~$\lambda_1(B) < \lambda_{n-k}(B)$. Then,
%
%\begin{enumerate}
%\item 
for every~$\alpha \in (0, 1)$ such that~$T_{B}(e) < \kappa(\alpha)$ we have
\begin{equation}\label{eqn:limp}
\lim_{\rho \to a} P_{\beta, \sigma, \rho}\left(\Phi_{B, \kappa(\alpha)}\right) = 0 \quad \text{ for every } \beta \in \R^k \text{ and every } \sigma \in (0, \infty). 
\end{equation}
%
%\item There exists an~$\alpha^* \in (0, 1]$ such that Equation~\eqref{eqn:limp} holds for every~$0 < \alpha < \alpha^*$, if and only if~$C_X e \notin \mathrm{Eig}(B, \lambda_{n-k}(B))$; if~$C_X e \notin \mathrm{Eig}(B, \lambda_{n-k}(B))$, then~$\alpha^*$ can be chosen equal to~$P_{0, 1, 0}\left(\Phi_{B, T_{B}(e)}\right)$.
%\end{enumerate}
\end{theorem}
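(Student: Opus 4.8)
The plan is to deduce the statement directly from Theorem~\ref{thm:ZPThl}, applied to the non-randomized test $\varphi := \mathbf{1}_{\Phi_{B,\kappa(\alpha)}}$. First, because $\lambda_1(B) < \lambda_{n-k}(B)$, Lemma~\ref{lem:sizecontrol} guarantees that $\kappa(\alpha)$ is well defined, so that $\varphi$ and the rejection probability $P_{\beta,\sigma,\rho}(\Phi_{B,\kappa(\alpha)}) = E_{\beta,\sigma,\rho}(\varphi)$ make sense for every admissible $\beta,\sigma,\rho$. Since $T_B$ is $G_X$-invariant, so is the rejection region $\Phi_{B,\kappa(\alpha)} = \{y \in \R^n : T_B(y) > \kappa(\alpha)\}$, and hence so is $\varphi$. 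It therefore only remains to verify the two hypotheses of Theorem~\ref{thm:ZPThl} for $\varphi$ and the vector $e$ from Assumption~\ref{ASC}, namely that $\varphi$ is continuous at $e$ and $\varphi(e) = 0$; equivalently, via the last sentence of Theorem~\ref{thm:ZPThl}, that $e$ does not belong to the closure of the Borel set $W := \Phi_{B,\kappa(\alpha)}$.

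Both of these are consequences of the behaviour of $T_B$ near $e$, and this is where the hypothesis $e \notin \limfunc{span}(X)$ enters. Since $\limfunc{span}(X)$ is a closed subset of $\R^n$, there is an open neighbourhood $U_0$ of $e$ with $U_0 \cap \limfunc{span}(X) = \emptyset$. On $U_0$ the statistic $T_B$ is given by the first branch of~\eqref{T_quadratic}, i.e. $T_B(y) = y'C_X'BC_Xy / \Vert C_X y\Vert^2$, and its denominator satisfies $\Vert C_X y\Vert^2 > 0$ throughout $U_0$, because $C_X y = 0$ holds if and only if $y \in \limfunc{span}(X)$ (recall $C_X'C_X = \Pi_{\limfunc{span}(X)^{\bot}}$). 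Hence $T_B$ is a ratio of continuous functions with non-vanishing denominator on $U_0$, so it is continuous on $U_0$, in particular at $e$. Combining continuity at $e$ with the standing hypothesis $T_B(e) < \kappa(\alpha)$, there is an open neighbourhood $U \subseteq U_0$ of $e$ on which $T_B < \kappa(\alpha)$, that is, $U \cap W = \emptyset$. Thus $\varphi \equiv 0$ on $U$, which simultaneously gives $\varphi(e) = 0$ and continuity of $\varphi$ at $e$ (equivalently, $e \notin \overline{W}$).

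With both hypotheses of Theorem~\ref{thm:ZPThl} verified, that theorem yields $\lim_{\rho \to a} E_{\beta,\sigma,\rho}(\varphi) = 0$ for every $\beta \in \R^k$ and every $\sigma \in (0,\infty)$, which is exactly~\eqref{eqn:limp}. No step in this argument is genuinely difficult; the only point that requires a little care is to use $e \notin \limfunc{span}(X)$ twice --- once so that near $e$ we are on the ``generic'' branch of~\eqref{T_quadratic}, and once so that the denominator $\Vert C_X y\Vert^2$ is bounded away from zero there --- which is precisely what is needed to make $T_B$ continuous at $e$ and to let the strict inequality $T_B(e) < \kappa(\alpha)$ propagate to a whole neighbourhood. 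In effect this is a repackaging of Corollary~2.21 of \cite{PP17} in the notation of the present paper.
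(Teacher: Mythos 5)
Your proof is correct, and it reaches the conclusion by a slightly different path than the paper does. The paper proves this result by citing Corollary~2.21 of \cite{PP17} directly: it verifies that Assumptions~1 and~2 of that paper hold (as already done for Theorem~\ref{thm:ZPThl}), then notes via Lemma~\ref{lem:sizecontrol} that $\kappa(\alpha)\in(\lambda_1(B),\lambda_{n-k}(B))$, and invokes the corollary. You instead go through the paper's own Theorem~\ref{thm:ZPThl}: you take $\varphi=\mathbf{1}_W$ with $W=\Phi_{B,\kappa(\alpha)}$, observe that $W$ is a $G_X$-invariant Borel set, and then supply the missing ingredient --- that $e\notin\overline{W}$ --- by a short, explicit continuity argument. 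The argument is sound: on a neighbourhood $U_0$ of $e$ disjoint from $\limfunc{span}(X)$ the statistic $T_B$ is given by the rational expression with nonvanishing denominator $\|C_Xy\|^2=\|\Pi_{\limfunc{span}(X)^\perp}y\|^2$, so it is continuous there, and the strict inequality $T_B(e)<\kappa(\alpha)$ then propagates to a smaller neighbourhood $U\subseteq U_0$ on which $T_B<\kappa(\alpha)$, hence $U\cap W=\emptyset$. (Your parenthetical ``bounded away from zero'' is slightly stronger than what you use or need, but nonvanishing plus continuity of the numerator already gives continuity of $T_B$ on $U_0$.) In effect you re-derive the relevant content of Corollary~2.21 of \cite{PP17} from the more general Theorem~2.7/Remark~2.8 of that paper (repackaged here as Theorem~\ref{thm:ZPThl}), whereas the paper simply outsources this step to the already-proved corollary. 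What your route buys is a more self-contained exposition within the present paper; what the paper's route buys is brevity, at the cost of an extra external citation. Both are valid, and the essential mathematical idea --- that the rejection region's closure misses $e$ when $T_B(e)<\kappa(\alpha)$ --- is the same.
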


Note that the sufficient conditions for the zero-power trap phenomenon pointed out in Theorems~\ref{thm:ZPThl} and~
%the first part of 
\ref{lem:zptphiPP17} depend on observable quantities only, and that they are thus \emph{checkable} by the user. Therefore, a researcher interested in testing problem~\eqref{testproblem} can use these conditions to check whether or not the given test suffers from the zero-power trap before actually using a test. In particular, one can decide \emph{not} to use a test that suffers from the zero-power trap. Before addressing the question \emph{how to avoid} the zero-power trap, which was raised already in the Introduction, we briefly pay some attention to the following question: ``how often'' does the zero-power trap actually arise? More specifically, in the important class of tests~$\Phi_{B, c}$ introduced in Section~\ref{sec:tests}, and most notably the tests discussed in Remark~\ref{rem:opttest}, the following question arises: For ``how many'' design matrices~$X$ does the zero-power trap arise? Answering this question is the content of the next section. %We shall make use of the second part of Theorem \ref{lem:zptphiPP17}.

\subsection{For ``how many'' design matrices does the zero-power trap arise?}\label{sec:sev}

We shall focus on the class of tests with rejection regions~$\Phi_{B(X), c}$ introduced in Section~\ref{sec:tests}. Since the question in the section title depends on the design matrix~$X$, which is otherwise held fixed in this article, we shall make the dependence of~$B$ on~$X$ explicit by writing~$B(X)$. Furthermore, we shall also write~$P_{\beta, \sigma, \rho}^X$ to emphasize its dependence on the design matrix~$X$. In our first attempt to answer the question under consideration, we shall use the following simple consequence of Lemma~\ref{lem:sizecontrol} and Theorem~\ref{lem:zptphiPP17}, which provides conditions on~$X$ under which Equation~\eqref{eqn:limp} holds for all ``small'' levels~$\alpha$.
\begin{lemma}\label{thm:conlcsuff}
Suppose Assumption~\ref{ASC} holds and let~$e$ denote the vector figuring in that assumption. Let~$B$ be a function from the set of full column rank~$n \times k$ matrices to the set of symmetric~$(n-k) \times (n-k)$-dimensional matrices. If an~$n \times k$ matrix~$X$ satisfies
\begin{equation}\label{eqn:concl}
\limfunc{rank}(X) = k \text{ and }C_Xe \notin \mathrm{Eig}\left(B(X), \lambda_{n-k}(B(X))\right),
\end{equation}
then~$\lambda_1(B(X)) < \lambda_{n-k}(B(X))$,~$P^{X}_{0, 1, 0}(\Phi_{B(X), T_{B(X)}(e)}) > 0$ and Equation~\eqref{eqn:limp} holds for every~$\alpha \in (0, P^{X}_{0, 1, 0}(\Phi_{B(X), T_{B(X)}(e)}))$.
\end{lemma}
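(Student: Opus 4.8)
The plan is to verify each of the three conclusions in turn, relying on Lemma~\ref{lem:sizecontrol} and Theorem~\ref{lem:zptphiPP17}, and to reduce everything to the two hypotheses in~\eqref{eqn:concl}. First I would observe that~$\limfunc{rank}(X)=k$ is exactly the standing assumption needed for~$C_X$ and all the machinery of Section~\ref{sec:tests} to make sense, so the only real content is the eigenvector condition. The first claim,~$\lambda_1(B(X)) < \lambda_{n-k}(B(X))$, follows by contraposition: if~$\lambda_1(B(X)) = \lambda_{n-k}(B(X))$, then~$B(X)$ is a scalar multiple of the identity, every nonzero vector (in particular~$C_X e$, which is nonzero because~$e\notin\limfunc{span}(X)$ is forced by~\eqref{eqn:concl}) lies in~$\mathrm{Eig}(B(X),\lambda_{n-k}(B(X)))=\R^{n-k}$, contradicting~\eqref{eqn:concl}. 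Hence strict inequality holds, and Lemma~\ref{lem:sizecontrol} applies to give the critical value function~$\kappa$.

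Next I would establish the positivity~$P^X_{0,1,0}(\Phi_{B(X), T_{B(X)}(e)}) > 0$. The key computation is that for~$y\notin\limfunc{span}(X)$ one has~$T_{B(X)}(y) > T_{B(X)}(e)$ precisely when the Rayleigh quotient of~$B(X)$ at~$C_X y/\|C_X y\|$ strictly exceeds the Rayleigh quotient at~$v:=C_X e/\|C_X e\|$; since~$v\notin\mathrm{Eig}(B(X),\lambda_{n-k}(B(X)))$ by hypothesis, the Rayleigh quotient at~$v$ is strictly less than~$\lambda_{\max}(B(X))$. Therefore the set of directions in~$\R^{n-k}$ at which the Rayleigh quotient exceeds~$R_{B(X)}(v)$ is a nonempty open subset of the unit sphere (it contains a neighbourhood of any maximal eigenvector). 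Pulling this back, the event~$\{T_{B(X)}>T_{B(X)}(e)\}$ has nonempty interior in~$\R^n\setminus\limfunc{span}(X)$, hence positive Lebesgue measure, hence positive probability under~$P^X_{0,1,0}$ (equivalently, under~$\mathbf{G}'[B(X)-T_{B(X)}(e)I_{n-k}]\mathbf{G}>0$ as in Remark~\ref{rem:numericcv}, which is a nondegenerate Gaussian quadratic-form event since~$B(X)-T_{B(X)}(e)I_{n-k}$ has at least one strictly positive eigenvalue). I would also note, for use in the last step, that~$T_{B(X)}(e) < \lambda_{n-k}(B(X)) = \lambda_{\max}(B(X))$, and recall from Lemma~\ref{lem:sizecontrol} that~$\kappa$ maps~$[0,1]$ onto~$[\lambda_1(B(X)),\lambda_{n-k}(B(X))]$ strictly decreasingly and continuously.

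Finally, for the zero-power conclusion, fix any~$\alpha\in\bigl(0, P^X_{0,1,0}(\Phi_{B(X), T_{B(X)}(e)})\bigr)$. I would invoke Theorem~\ref{lem:zptphiPP17}, whose hypotheses I must check: Assumption~\ref{ASC} is assumed;~$e\notin\limfunc{span}(X)$ holds as noted above;~$\lambda_1(B(X)) < \lambda_{n-k}(B(X))$ was just proved. It then remains to verify the inequality~$T_{B(X)}(e) < \kappa(\alpha)$. Here the argument is a monotonicity comparison: the map~$c\mapsto P^X_{0,1,0}(\Phi_{B(X),c})$ is non-increasing, and at~$c = T_{B(X)}(e)$ its value is~$P^X_{0,1,0}(\Phi_{B(X),T_{B(X)}(e)}) > \alpha$ by the choice of~$\alpha$; since~$\kappa(\alpha)$ is the (unique) value at which this map equals~$\alpha$, and~$\kappa$ is the inverse of a strictly decreasing function, we must have~$\kappa(\alpha) > T_{B(X)}(e)$ --- more carefully, because~$\kappa(\alpha)\in[\lambda_1(B(X)),\lambda_{n-k}(B(X))]$ and~$T_{B(X)}(e)$ also lies in that interval (it is a Rayleigh quotient of~$B(X)$), the strict decrease of~$P^X_{0,1,0}(\Phi_{B(X),\cdot})$ on that interval together with~$P^X_{0,1,0}(\Phi_{B(X),T_{B(X)}(e)}) > \alpha = P^X_{0,1,0}(\Phi_{B(X),\kappa(\alpha)})$ yields~$T_{B(X)}(e) < \kappa(\alpha)$. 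Theorem~\ref{lem:zptphiPP17} then gives~\eqref{eqn:limp}, completing the proof.

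The step I expect to be the main obstacle is the careful handling of the monotonicity/strict-decrease argument in the last paragraph: one needs to be sure that~$c\mapsto P^X_{0,1,0}(\Phi_{B(X),c})$ is \emph{strictly} decreasing on the relevant interval (this is part of what underlies Lemma~\ref{lem:sizecontrol}, so it can be quoted), and that~$T_{B(X)}(e)$ genuinely lies in~$[\lambda_1(B(X)),\lambda_{n-k}(B(X))]$ with the Rayleigh-quotient bound being strict at the top end precisely because of the eigenvector hypothesis~\eqref{eqn:concl}. Everything else is bookkeeping: translating ``$C_X e$ is not a top eigenvector'' into ``$T_{B(X)}(e) < \lambda_{\max}(B(X))$'' and into the positivity of the rejection probability at the threshold~$T_{B(X)}(e)$.
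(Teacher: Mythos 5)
Your proposal is correct and follows essentially the same route as the paper's proof: derive $e\notin\limfunc{span}(X)$ and $\lambda_1(B(X))<\lambda_{n-k}(B(X))$ from the eigenspace condition, show $\lambda_1(B(X))\le T_{B(X)}(e)<\lambda_{n-k}(B(X))$ via the Rayleigh-quotient bound, read off positivity and the critical-value comparison from Lemma~\ref{lem:sizecontrol}, and finish with Theorem~\ref{lem:zptphiPP17}. The only cosmetic difference is that the paper extracts the positivity of $P^X_{0,1,0}(\Phi_{B(X),T_{B(X)}(e)})$ directly from the strict-monotonicity/continuity clause of Lemma~\ref{lem:sizecontrol}, whereas you rebuild that fact from scratch via an open-set argument in the Gaussian quadratic-form picture; both work.
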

For a class of functions~$X \mapsto B(X)$ that includes the ones discussed in Remark~\ref{rem:opttest} we shall now show that condition~\eqref{eqn:concl} is generically satisfied, unless the matrix~$B(X)$ has a very exceptional form. The result is established under a restriction concerning the eigenspace corresponding to the largest eigenvalue of~$B(X)$. 

\begin{proposition}\label{prop:smallalpha}
Suppose that~$k < n-1$ and that Assumption~\ref{ASC} holds. Let~$B$ be a function from the set of full column rank~$n \times k$ matrices to the set of symmetric~$(n-k) \times (n-k)$-dimensional matrices. Let~$M \in \R^{n \times n}$ be a symmetric matrix that can \emph{not} be written as~$c_1 I_n + c_2 ee'$ for real numbers~$c_1, c_2$ with~$c_2 \geq 0$, where~$e$ is the vector figuring in Assumption~\ref{ASC}. Suppose further that for every~$X \in \R^{n \times k}$ of full column rank a~$C_X \in \R^{(n-k) \times n}$ satisfying~$C_XC_X' = I_{n-k}$ and~$C_X'C_X = \Pi_{\limfunc{span}(X)^{\bot}}$ can be chosen such that  
\begin{equation}\label{eqn:asEig}
\limfunc{Eig}\left(B(X), ~\lambda_{n-k}(B(X))\right) = \limfunc{Eig}\left(C_{X}MC_{X}^{\prime }, ~\lambda_{n-k}(C_{X}MC_{X}^{\prime })\right).
\end{equation}
Then, up to a~$\mu_{\R^{n \times k}}$-null set of exceptional matrices, every~$X \in \R^{n \times k}$ satisfies~\eqref{eqn:concl}.
%
%\begin{equation}\label{eqn:concl}
%e \notin \limfunc{span}(X), ~\lambda_1(B(X)) < \lambda_{n-k}(B(X)), \text{ and } C_Xe \notin \mathrm{Eig}(B(X), \lambda_{n-k}(B(X))).
%\end{equation}
An immediate consequence is as follows: Given~$\alpha \in (0, 1)$ denote by~$\mathscr{X}(\alpha; B) \subseteq \R^{n \times k}$ the set of all~$X \in \R^{n \times k}$ of rank~$k$ such that~$\lambda_1(B(X)) < \lambda_{n-k}(B(X))$ and such that  
\begin{equation}
\lim_{\rho \to a} P^X_{\beta, \sigma, \rho}(\Phi_{B(X), C_X, \kappa(\alpha)}) = 0 \quad \text{ for every } \beta \in \R^k \text{ and every } \sigma \in (0, \infty).
\end{equation}
Then,~$\mathscr{X}(\alpha_2; B) \subseteq \mathscr{X}(\alpha_1; B)$ holds for~$0 < \alpha_1 \leq \alpha_2 < 1$, and for any sequence~$\alpha_m$ in~$(0, 1)$ converging to~$0$ the complement of~$\bigcup_{m \in \N} \mathscr{X}(\alpha_m; B)$ is contained in a~$\mu_{\R^{n \times k}}$-null set.
\end{proposition}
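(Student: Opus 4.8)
The plan is to reduce the statement to a transversality/algebraicity argument showing that the ``bad'' set
\[
\mathscr{N} := \left\{ X \in \R^{n \times k} : \limfunc{rank}(X) < k \text{ or } C_X e \in \limfunc{Eig}\left(B(X), \lambda_{n-k}(B(X))\right) \right\}
\]
is a $\mu_{\R^{n\times k}}$-null set. The rank-deficient matrices form a proper algebraic subvariety of $\R^{n\times k}$ and hence a null set, so it suffices to work on the open full-rank locus and show that there the condition $C_X e \in \limfunc{Eig}(C_X M C_X', \lambda_{n-k}(C_X M C_X'))$ (using the hypothesis~\eqref{eqn:asEig}) holds only on a null set. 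First I would fix, on a neighborhood of a given full-rank $X_0$, a smooth (indeed real-analytic) local choice $X \mapsto C_X$; this is possible because the orthogonal complement of $\limfunc{span}(X)$ varies analytically with $X$ on the full-rank locus (e.g. via $C_X'C_X = I_n - X(X'X)^{-1}X'$ followed by a local analytic orthonormalization). Then $X \mapsto C_X M C_X'$ and $X \mapsto C_X e$ are real-analytic maps on this neighborhood.

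Next I would analyze the event that $v(X) := C_X e$ lies in the top eigenspace of $A(X) := C_X M C_X'$. I would split into two cases according to whether $C_X e = 0$. The locus $\{C_X e = 0\} = \{e \in \limfunc{span}(X)\}$ is contained in a proper algebraic subvariety (it imposes the vanishing of the $(n-k)$ coordinates of $C_X e$, i.e. the condition that the augmented matrix $[X \mid e]$ has rank $k$), hence is null; away from it, $C_X e \ne 0$, and membership in the top eigenspace is equivalent to the pair of conditions
\[
A(X)\, v(X) = \lambda_{n-k}(A(X))\, v(X), \qquad v(X) \ne 0 .
\]
The key reduction is: if this held on a set of positive Lebesgue measure, then by real-analyticity (a real-analytic function vanishing on a positive-measure set vanishes identically on a connected domain — applied to each scalar component of $A(X)v(X) - \langle \text{top eigenvalue} \rangle v(X)$, with care near eigenvalue crossings) the relation would propagate to all of $\R^{n\times k}$, and in particular it would force $v(X)$ to be a top eigenvector of $A(X)$ for \emph{every} full-rank $X$. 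I would then derive a contradiction with the hypothesis that $M$ is not of the form $c_1 I_n + c_2 ee'$ with $c_2 \ge 0$: by choosing $X$ to range over enough $k$-dimensional subspaces (this is where $k < n-1$ is used, so that $n-k \ge 2$ and the relevant eigenvalue is a genuine top eigenvalue with a nontrivial complement), the requirement that $\Pi_{\limfunc{span}(X)^\bot} e$ always be a maximal eigenvector of $\Pi_{\limfunc{span}(X)^\bot} M \Pi_{\limfunc{span}(X)^\bot}|_{\limfunc{span}(X)^\bot}$ pins down the action of $M$ on all of $\R^n$, forcing $Me = \mu e$ for some $\mu$ and $M$ acting as a single scalar on $e^\bot$, i.e. $M = c_1 I_n + c_2 ee'$; the sign condition $c_2 \ge 0$ comes from $e$ being a \emph{maximal} rather than arbitrary eigenvector. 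This contradiction shows $\mathscr{N}$ is locally null near every full-rank $X_0$, and since $\R^{n\times k}$ is covered by countably many such neighborhoods (Lindel\"of), $\mathscr{N}$ is globally null. The conclusion that $X$ satisfies~\eqref{eqn:concl} outside a null set then follows, and Lemma~\ref{thm:conlcsuff} gives the remaining assertions about positivity of the rejection probability and validity of~\eqref{eqn:limp} for small $\alpha$.

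For the ``immediate consequence'': monotonicity $\mathscr{X}(\alpha_2;B) \subseteq \mathscr{X}(\alpha_1;B)$ for $\alpha_1 \le \alpha_2$ follows since $\kappa$ is strictly decreasing (Lemma~\ref{lem:sizecontrol}), so $\kappa(\alpha_1) \ge \kappa(\alpha_2)$, hence $\Phi_{B(X),\kappa(\alpha_2)} \supseteq \Phi_{B(X),\kappa(\alpha_1)}$ and the limiting rejection probability at $\alpha_1$ is dominated by that at $\alpha_2$; thus if the latter is zero so is the former. For the last claim, fix $X$ satisfying~\eqref{eqn:concl}: by Lemma~\ref{thm:conlcsuff} we have $p(X) := P^X_{0,1,0}(\Phi_{B(X), T_{B(X)}(e)}) > 0$ and~\eqref{eqn:limp} holds for all $\alpha \in (0, p(X))$, i.e. $X \in \mathscr{X}(\alpha;B)$ for all such $\alpha$; given $\alpha_m \to 0$, eventually $\alpha_m < p(X)$, so $X \in \bigcup_m \mathscr{X}(\alpha_m;B)$. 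Hence the complement of $\bigcup_m \mathscr{X}(\alpha_m;B)$ is contained in $\mathscr{N}$, which is null.

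**Main obstacle.** I expect the delicate point to be the real-analyticity/continuation step combined with eigenvalue crossings: $\lambda_{n-k}(A(X))$ is only Lipschitz, not analytic, where the top eigenvalue is not simple, so one must phrase the membership condition in an analytic way — e.g. by saying ``$v(X)$ is orthogonal to every eigenvector for eigenvalues $< \lambda_{n-k}$'', or by working with the analytic quantity $\big(A(X) - \lambda_{n-k}(A(X)) I\big) v(X)$ on the open dense subset where the top eigenvalue is simple (which one must first check is indeed open and dense, again using that $M \ne c_1 I_n + c_2 ee'$), and only afterwards extending the consequences by continuity. Getting this bookkeeping right, and correctly extracting the structural conclusion $M = c_1 I_n + c_2 ee'$ (with $c_2 \ge 0$) from ``$\Pi_{\limfunc{span}(X)^\bot} e$ is always a top eigenvector,'' is the technical heart of the argument.
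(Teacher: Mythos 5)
Your overall strategy is sound and close in spirit to the paper's: reduce to showing that the ``bad'' set of full-rank $X$ with $C_X e$ lying in the top eigenspace of $B(X)$ is Lebesgue-null, and use the algebraic structure of that condition together with the non-degeneracy hypothesis on $M$. Your treatment of the monotonicity claim and of the union $\bigcup_m \mathscr{X}(\alpha_m;B)$ via Lemma~\ref{thm:conlcsuff} is correct and matches the paper.

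However, the step you flag as the ``main obstacle'' is a genuine gap, and the paper resolves it with an idea you do not have. You want to phrase ``$C_X e$ lies in the top eigenspace of $A(X)=C_X M C_X'$'' as the vanishing of a real-analytic function and invoke analytic continuation, but, as you note, $\lambda_{n-k}(A(X))$ is not analytic at eigenvalue crossings, and neither of your two suggested workarounds (``orthogonal to every sub-maximal eigenvector'' or ``restrict to the simple-top-eigenvalue locus'') is itself analytic or obviously sufficient — the second in particular requires you to first prove that the set where the top eigenvalue is degenerate is null, and you would then still have to control what happens on that set. The paper sidesteps the eigenvalue-labeling problem entirely: it enlarges the bad set to $\{X:\Pi_{\operatorname{span}(X)^\perp}e$ is \emph{some} eigenvector of $\Pi_{\operatorname{span}(X)^\perp}M\Pi_{\operatorname{span}(X)^\perp}\}$. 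Being an eigenvector (no reference to which eigenvalue) is a rank-$<2$ condition on the $n\times 2$ matrix $(\Pi e,\ \Pi M\Pi e)$; after clearing the denominator $\det(X'X)$ this becomes the zero set of an explicit polynomial $p_M$ in the entries of $X$ (Lemma~\ref{lem:algebraic}). The lemma then shows $p_M\equiv 0$ iff $M=c_1 I_n+c_2 ee'$ for \emph{some} reals $c_1,c_2$ (no sign restriction), by exhibiting a concrete witness $L$ with $p_M(L)\ne 0$ otherwise, split according to whether $e$ is an eigenvector of $M$. Because the enlargement drops the sign information, the paper must treat the remaining case $M=c_1 I_n+c_2 ee'$ with $c_2<0$ separately: there $p_M\equiv 0$, so the polynomial argument is vacuous, but a direct computation shows the top eigenspace of $C_X M C_X'$ is $\operatorname{span}(C_X e)^\perp$, so $C_X e$ lies in it only when $C_X e=0$, i.e.\ $e\in\operatorname{span}(X)$, again a null set. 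Your sketch does not produce this case split, and the ``forces $M=c_1I_n+c_2ee'$ with $c_2\ge 0$'' deduction you outline is precisely the part that cannot be extracted from the analytic-continuation heuristic without confronting the crossing problem. In short: the polynomial $p_M$ and the explicit-witness construction of Lemma~\ref{lem:algebraic}, together with the separate case $c_2<0$, are the missing ingredients that turn your plan into a proof.
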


\begin{remark}\label{rem:eigmon}
Note that for~$B(X)=C_{X}%
\dot{\Sigma}(0)C_{X}^{\prime }$ Condition~\eqref{eqn:asEig} in Proposition~\ref{prop:smallalpha} is trivially satisfied with~$M = \dot{\Sigma}(0)$. For~$B(X) =-\left( C_{X}\Sigma (\bar{\rho})C_{X}^{\prime }\right) ^{-1}$ and~$\bar{\rho} \in (0, a)$ it is easy to see that Condition~\eqref{eqn:asEig} is satisfied with~$M = \Sigma(\bar{\rho})$. Therefore, if for any of these two specific choices the additional condition holds that the respective~$M$ can not be written as~$c_1 I_n + c_2 ee'$ for real numbers~$c_1, c_2$ where~$c_2 \geq 0$ is satisfied, then Proposition~\ref{prop:smallalpha} applies. 
\end{remark}

Proposition~\ref{prop:smallalpha} shows that tests based on~$T_{B(X)}$ suffer from the zero-power trap for ``most'' design matrices~$X$, at least for small choices of~$\alpha$. The discussion section of \cite{kramer85} contains a corresponding statement (without proof) in a special case. 

Choosing~$\alpha$ small is not completely uncommon in practice: Due to the fact that testing for correlation is often just one part of the econometric analysis, the actual level~$\alpha$ employed in this test can be quite small. One example is specification testing. Another example is the situation where tests for correlation are ``inverted'' to build a confidence interval for~$\rho$, which is then used for a Bonferroni-type construction of a data-dependent critical value of another test (cf.~\cite{Leeb2017} for further information concerning such critical values).

Nevertheless, the question remains as to how ``large'' the set~$\mathscr{X}(\alpha; B)$ actually is for a \emph{fixed}~$\alpha$, such as the conventional~$\alpha = .05$ or~$\alpha = .01$. For example, Proposition~\ref{prop:smallalpha} does not tell us whether or not the set of design matrices~$\mathscr{X}(.01; B)$ is empty. Similarly, one can ask if~$\mathscr{X}(.01; B)$ contains an open set, or if it has positive~$\mu_{\R^{n \times k}}$ measure? The latter questions have already been considered in detail in the main results of \cite{martellosio2012testing} for point-optimal invariant and locally best invariant tests in the important context of spatial autoregressive regression models. Adopting his proof strategy, we establish the following proposition. The argument requires a different assumption on~$B$ than the one used in Proposition~\ref{prop:smallalpha}. First, the condition used now concerns the eigenspace of~$B(X)$ corresponding to its \emph{smallest} eigenvalue (as opposed to the condition on the largest eigenvalue used in Proposition~\ref{prop:smallalpha}). Second, continuity conditions are imposed, which are required for limiting arguments in the proof. As discussed in Remark~\ref{rem:summary} below, the assumptions are again satisfied in the leading choices for~$B$ discussed in Remark~\ref{rem:opttest}.

\begin{proposition}\label{prop:mart}
Suppose that~$k < n-1$ and that Assumption~\ref{ASC} holds. Let~$B$ be a function from the set of full column rank~$n \times k$ matrices to the set of symmetric~$(n-k) \times (n-k)$-dimensional matrices. Suppose there exists a function~$F$ from the set of~$(n-k) \times (n-k)$ matrices to itself, such that for every~$X \in \R^{n \times k}$ of full column rank~$B(X) = F(C_X M C_X')$ holds for a suitable choice of~$C_X \in \R^{(n-k) \times n}$ satisfying~$C_XC_X' = I_{n-k}$ and~$C_X'C_X = \Pi_{\limfunc{span}(X)^{\bot}}$, and for~$M \in \R^{n \times n}$ a symmetric matrix that can \emph{not} be written as~$c_1 I_n + c_2 ee'$ for real numbers~$c_1, c_2$ where~$c_2 \geq 0$. Here~$e$ is the vector figuring in Assumption~\ref{ASC}. Suppose further that~$F$ is continuous at every element~$A$, say, of the closure of~$\{C_X MC_X': X \in \R^{n \times k},~\limfunc{rank}(X) = k\} \subseteq \R^{(n-k)\times (n-k)}$, and that for every such~$A$ we have
\begin{equation}\label{eqn:eigprops}
	\limfunc{Eig}\left(F(A), ~\lambda_{1}(F(A))\right) = \limfunc{Eig}\left(A, ~\lambda_{1}(A)\right).
\end{equation}
%
%Then, for~$\mu_{\R^{n \times k}}$-almost every~$X \in \R^{n \times k}$ of full column rank it holds that~$\lambda_1(B(X)) < \lambda_{n-k}(B(X))$. 
Define~$\mathscr{X}(\alpha; B) \subseteq \R^{n \times k}$ as in Proposition~\ref{prop:smallalpha}. Then, the following holds:
\begin{enumerate}
\item~$\mathscr{X}(\alpha; B) \neq \emptyset$ holds for every~$\alpha \in (0, 1)$;
\item suppose that for every~$z \in \R^n$ the function~$X \mapsto T_{B(X), C_X}(z)$ is continuous at every~$X \in \R^{n \times k}$ of full column rank such that~$z \notin \limfunc{span}(X)$. Then, for every~$\alpha \in (0, 1)$ the interior of $\mathscr{X}(\alpha; B)$ is nonempty (and thus has positive~$\mu_{\R^{n \times k}}$ measure).
\end{enumerate}
\end{proposition}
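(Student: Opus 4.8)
The plan is to feed Theorem~\ref{lem:zptphiPP17} with explicitly chosen design matrices. Fix $\alpha\in(0,1)$; by that theorem it suffices to produce a matrix $X$ of full column rank with $e\notin\limfunc{span}(X)$, with $\lambda_{1}(B(X))<\lambda_{n-k}(B(X))$, and with $T_{B(X)}(e)<\kappa(\alpha;B(X))$, since any such $X$ then belongs to $\mathscr{X}(\alpha;B)$. Write $V:=\limfunc{span}(X)^{\bot}$ and recall that $C_{X}$ restricts to an isometry of $V$ onto $\R^{n-k}$ conjugating the compression $M_{V}:=\Pi_{V}M\Pi_{V}|_{V}$ to $C_{X}MC_{X}^{\prime}$; hence the eigenvalues of $B(X)$ coincide with those of $M_{V}$, and applying hypothesis~\eqref{eqn:eigprops} at $A=C_{X}MC_{X}^{\prime}$ (which lies in the closure figuring there) gives $\limfunc{Eig}(B(X),\lambda_{1}(B(X)))=C_{X}\bigl(\limfunc{Eig}(M_{V},\lambda_{1}(M_{V}))\bigr)$. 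Consequently $T_{B(X)}(e)=\lambda_{1}(B(X))$ exactly when $\Pi_{V}e\neq0$ and $\Pi_{V}e\in\limfunc{Eig}(M_{V},\lambda_{1}(M_{V}))$, while $T_{B(X)}(e)>\lambda_{1}(B(X))$ otherwise; and since $\kappa(\alpha;B(X))$ lies strictly between $\lambda_{1}(B(X))$ and $\lambda_{n-k}(B(X))$ (Lemma~\ref{lem:sizecontrol}), it is enough either (a) to find such an $X$ with $\Pi_{V}e$ a non-zero bottom eigenvector of $M_{V}$ and $M_{V}$ non-scalar — which then lies in $\mathscr{X}(\alpha;B)$ for \emph{every} $\alpha\in(0,1)$ at once — or (b) to exhibit a sequence of admissible $X$'s along which $T_{B(X)}(e)-\lambda_{1}(B(X))\to0$ while the (non-degenerate) spectrum of $B(X)$ converges.

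The heart of the matter is thus a statement about compressions of $M$. Put $E_{1}:=\limfunc{Eig}(M,\lambda_{\min}(M))$, a proper subspace of $\R^{n}$ since $M$ is not scalar. If $e\in E_{1}$, take $V=S^{\bot}$ for a $k$-dimensional $S\subseteq e^{\bot}$ chosen so that $V\not\subseteq E_{1}$ (possible because $\dim S=k<n-1$); then $\Pi_{V}e=e\in E_{1}\setminus\{0\}$ is a bottom eigenvector of $M_{V}$ and $M_{V}$ is non-scalar, so (a) applies. If $e\notin E_{1}$ but $e$ is not orthogonal to $E_{1}$, let $p$ be the unit vector along $\Pi_{E_{1}}e$ and $q:=e-\langle p,e\rangle p\in E_{1}^{\bot}\setminus\{0\}$; taking $V=S^{\bot}$ with $q\in S\subseteq p^{\bot}$, $\dim S=k$, and $V\not\subseteq E_{1}$ (again using $k<n-1$) yields $\Pi_{V}e=\langle p,e\rangle p\in E_{1}\setminus\{0\}$, a bottom eigenvector of $M_{V}$, with $M_{V}$ non-scalar, so (a) applies. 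The remaining case $e\perp E_{1}$ is where the hypothesis on $M$ is genuinely used: if the compression $M|_{e^{\bot}}$ were scalar, one checks, via the block form of $M$ in an orthonormal basis adapted to $e$, that $M$ would be of the excluded form $c_{1}I_{n}+c_{2}ee^{\prime}$ with $c_{2}\geq0$, or else $e\in E_{1}$ — both impossible here; so $M|_{e^{\bot}}$ is non-scalar, and we may pick an $(n-k)$-dimensional $V_{0}\subseteq e^{\bot}$ with $M_{V_{0}}$ non-scalar and (perturbing $V_{0}$ slightly if necessary) with $\lambda_{1}(M_{V_{0}})$ simple. Now $\Pi_{V_{0}}e=0$, so we approximate as in (b): with $w^{*}$ the bottom eigenvector of $M_{V_{0}}$ (note $w^{*}\perp e$) and $\epsilon_{m}\downarrow0$, let $X_{m}$ have column span $\limfunc{span}(e+\epsilon_{m}w^{*})+\bigl(\limfunc{span}(X_{0})\cap e^{\bot}\bigr)$, where $\limfunc{span}(X_{0})=V_{0}^{\bot}$. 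A short computation gives $e\notin\limfunc{span}(X_{m})$ and $\Pi_{V_{m}}e/\|\Pi_{V_{m}}e\|\to w^{*}/\|w^{*}\|$, so the angle between $\Pi_{V_{m}}e$ and $\limfunc{Eig}(M_{V_{m}},\lambda_{1}(M_{V_{m}}))$ tends to $0$; this forces $T_{B(X_{m})}(e)-\lambda_{1}(B(X_{m}))\to0$, while the spectra of $B(X_{m})$ converge to the non-degenerate spectrum of $B(X_{0})$, so that $\kappa(\alpha;B(X_{m}))-\lambda_{1}(B(X_{m}))$ stays bounded below. Thus $T_{B(X_{m})}(e)<\kappa(\alpha;B(X_{m}))$ for all large $m$, and such $X_{m}\in\mathscr{X}(\alpha;B)$. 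This establishes part~1.

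For part~2 I would perturb the matrix just produced: the exact $X$ of the first two cases (with $T_{B(X)}(e)=\lambda_{1}(B(X))<\kappa(\alpha;B(X))$), or, in the last case, a single $X_{m}$ with $m$ large (so that $T_{B(X_{m})}(e)<\kappa(\alpha;B(X_{m}))$ strictly). On a neighbourhood of this $X$: the conditions $e\notin\limfunc{span}(X')$ and $\limfunc{rank}(X')=k$ are open; the eigenvalues of $B(X')$ (equal to those of the compression of $M$ to $\limfunc{span}(X')^{\bot}$) depend continuously on $X'$, so $\lambda_{1}(B(X'))<\lambda_{n-k}(B(X'))$ persists, and therefore $\kappa(\alpha;B(X'))$ is continuous in $X'$ (it is a continuous function of those eigenvalues, by the quadratic-form characterization in Remark~\ref{rem:numericcv}); and $X'\mapsto T_{B(X'),C_{X'}}(e)$ is continuous at $X$ by the additional hypothesis of part~2. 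Hence $T_{B(X'),C_{X'}}(e)<\kappa(\alpha;B(X'))$ throughout a neighbourhood of $X$, and by Theorem~\ref{lem:zptphiPP17} that neighbourhood is contained in $\mathscr{X}(\alpha;B)$. So $\mathscr{X}(\alpha;B)$ has non-empty interior and hence positive $\mu_{\R^{n\times k}}$-measure.

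The step I expect to be the main obstacle is the linear-algebraic core, and within it the case $e\perp E_{1}$: identifying precisely when $M|_{e^{\bot}}$ is scalar so that the exclusion on $M$ can be invoked, and then justifying $T_{B(X_{m})}(e)\to\lambda_{1}(B(X_{0}))$ along the approximating sequence. Because $T_{B(X),C_{X}}$ may genuinely depend on the choice of $C_{X}$, the convergence is cleanest when phrased through the ($C_{X}$-independent) angle between $\Pi_{V_{m}}e$ and $\limfunc{Eig}(M_{V_{m}},\lambda_{1}(M_{V_{m}}))$ — which is why one wants $\lambda_{1}(M_{V_{0}})$ simple, so that this eigenspace varies continuously — rather than through convergence of the matrices $B(X_{m})$ themselves. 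A minor but fiddly point is arranging the non-scalarity condition $V\not\subseteq E_{1}$ in the first two cases using only $k<n-1$.
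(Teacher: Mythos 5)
Your reduction — apply Theorem~\ref{lem:zptphiPP17} to a design matrix with $T_{B(X)}(e)$ strictly below the size-$\alpha$ critical value, which via~\eqref{eqn:eigprops} amounts to a statement about the compression $M_V=\Pi_VM\Pi_V|_V$ with $V=\limfunc{span}(X)^\perp$ — is the same as the paper's, and Part~2 of your argument (pass to a neighbourhood by continuity) mirrors the paper's. But your Case~B breaks down when $\limfunc{dim}\limfunc{Eig}(M,\lambda_{\min}(M))=n-1$, equivalently when $M=\lambda_{\min}(M)I_n+(\lambda_{\max}(M)-\lambda_{\min}(M))ff'$ for a unit top eigenvector $f\neq\pm e$; this configuration is permitted by the hypotheses (the excluded form is $c_1I_n+c_2ee'$ with $c_2\geq 0$, not $c_1I_n+c_2ff'$). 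In that situation $E_1^\perp=\limfunc{span}(q)$, so requiring $q\in S$ forces $S\supseteq E_1^\perp$, hence $V=S^\perp\subseteq E_1$, hence $M_V=\lambda_{\min}(M)I_V$ is scalar and Lemma~\ref{lem:sizecontrol} does not apply. So the ``minor but fiddly point'' you flag at the end — arranging $V\not\subseteq E_1$ using only $k<n-1$ — is in fact where Case~B fails. The paper's Lemma~\ref{lem:exseq} sidesteps this by using an approximating sequence throughout the case $v\notin E_1$ (its case~1.b), never attempting the exact construction~(a) in that regime.

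Two smaller issues. In Case~C you assert $\lambda_1(M_{V_0})$ can be made simple by a small perturbation of $V_0\subseteq e^\perp$; when $\limfunc{dim}\limfunc{Eig}(M,\lambda_{\min}(M))\geq k+1$ this fails (every admissible $(n-k)$-dimensional $V_0\subseteq e^\perp$ then meets the bottom eigenspace in a subspace of dimension $\geq 2$). It is also unnecessary: $T_{B(X_m)}(e)-\lambda_1(B(X_m))\to 0$ follows from $\Pi_{V_m}e/\|\Pi_{V_m}e\|\to w^*$ for a single bottom eigenvector $w^*$ of $M_{V_0}$, together with $C_{X_m}MC_{X_m}'\to C_{X_0}MC_{X_0}'$ and continuity of $F$; continuity of the whole bottom eigenspace is not needed. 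And in Part~2 the continuity of $X\mapsto\kappa(\alpha;B(X))$ you invoke is true but not a one-liner: the paper derives it by showing that the cdf of $P_{0,1,0}\circ T_{B(X),C_X}$ depends continuously on $X$ and has non-degenerate support on which it is continuous and strictly increasing (Lemma~B.4 of \cite{PP17}), so that its $1-\alpha$ quantile is continuous in $X$.
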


\begin{remark}\label{rem:summary}
Similar to Remark~\ref{rem:eigmon} we note that Proposition~\ref{prop:mart} can be applied to~$B(X) = C_X \dot{\Sigma}(0)C_X'$ (with~$M = \dot{\Sigma}(0)$ and~$F$ the identity function), or to~$B(X) = -(C_X \Sigma(\overline{\rho})C_X')^{-1}$, where~$\overline{\rho} \in (0, a)$, (with~$M = \Sigma(\overline{\rho})$ and~$F$ the function~$A \mapsto -A^{-1}$, noting that this function satisfies the continuity requirement as~$\Sigma(\overline{\rho})$ is positive definite) provided that the corresponding~$M$ matrix is not of the exceptional form~$c_1 I_n + c_2 ee'$ for~$c_2 \geq 0$. It is not difficult to show that the continuity requirement in Part~2 of the proposition is satisfied for these two choices of~$B$. For~$B(X) = C_X \dot{\Sigma}(0) C_X'$ this is trivial. For~$B(X) = -(C_X \Sigma(\overline{\rho})C_X')^{-1}$, where~$\overline{\rho} \in (0, a)$, an argument is given in Appendix~\ref{ZPTAPP}. We can hence conclude that unless~$\dot{\Sigma}(0)$ or~$\Sigma(\overline{\rho})$, respectively, is of the form~$c_1 I_n + c_2 ee'$ for some nonnegative~$c_2$, the test~$\Phi_{B(X), \kappa(\alpha)}$ suffers from the zero-power trap for every~$\alpha \in (0, 1)$ for every~$X$ in a non-empty open set of design matrices.
\end{remark}

\begin{remark}\label{rem:exc}
We emphasize that Propositions~\ref{prop:smallalpha} and~\ref{prop:mart} do \emph{not} apply in case~$M = c_1 I_n + c_2 ee'$ holds for real numbers~$c_1, c_2$ where~$c_2 \geq 0$. On the one hand, it is clear that in case~$c_2 = 0$ a test as in these two propositions with~$M = c_1 I_n$ trivially breaks down, as the corresponding test statistics are then constant. But on the other hand, as already observed (for the special case~$c_1 = 0$ and~$c_2 = 1$) in \cite{PP17} in the discussion preceding their Remark~2.27, using tests based on~$M = c_1 I_n + c_2 ee'$ for a~$c_2 > 0$ indeed presents an opportunity to avoid the zero power trap. This will be discussed more formally in Section~\ref{sec:ee}.
\end{remark}

From the results in the present section we learn that for tests that satisfy certain structural properties, the zero power trap arises for generic design matrices for~$\alpha$ small enough. Furthermore, for every~$\alpha$ there exists (under suitable assumptions) a nonempty open set of design matrices every element of which suffers from the zero-power trap. 
%Hence, the question whether standard tests like the ones discussed in Remark~\ref{rem:opttest} suffer from the zero-power trap can be answered affirmatively. 
%
We would like to emphasize, however, that these results \emph{do not rule out} the possibility that for a given~$X$ the actual level~$\alpha$ needed such that the zero-power trap arises can be low (far outside the commonly used range of levels), or that given~$\alpha$ the open set of design matrices for which the zero-power trap occurs is ``small''. Numerical results that illustrate the ``practical severity'' of the zero-power trap in spatial regression models are provided in Section~3 of \cite{kramer2005finite}, in particular his Table~1 is very interesting in this context, and further discussion and examples can be found in \cite{mart10} and \cite{martellosio2012testing}. These results seem to suggest that the zero-power trap occurs frequently for commonly used levels of significance in case~$n-k$ is ``small'', i.e., in ``high-dimensional'' scenarios, whereas if~$n-k$ is large the zero-power trap does not appear that frequently. However, this also depends on the dependence structure.

\section{Avoiding the zero-power trap}\label{sec:avoid}

Having provided some context and motivation, we now discuss three ways to avoid the zero-power trap: In Section~\ref{sec:ee} we expand on the observation just made in Remark~\ref{rem:exc}. The strategy discussed in Section~\ref{sec:artreg} is based on an idea involving artificial regressors. The method we recommend, however, builds on Section~\ref{sec:ee} and is introduced in Section~\ref{subs:approx}. Our suggestion tries to overcome sub-optimality properties of the other methods. As discussed in the Introduction, the idea underlying our approach can be interpreted as a finite sample variant of the power enhancement principle of \cite{fan2015power}. 

\subsection{Tests based on~$T_B$ with~$B = C_X ee' C_X'$}\label{sec:ee}

%ARE ``all the exceptional tests'' with the eigenspace constraint and with~$M$ of the exceptional form equivalent to using~$B = C_X (ee') C_X'$ ?? denke schon.

As discussed in Remark~\ref{rem:exc}, tests based on the test statistic~$T_B$ with~$B = C_X ee' C_X'$ do not satisfy the assumptions underlying Propositions~\ref{prop:smallalpha} and~\ref{prop:mart}. Hence, these two propositions do not let us conclude anything concerning the question ``how often'' the zero-power trap occurs for such tests. It turns out that these tests do not suffer from the zero-power trap for any~$\alpha \in (0, 1)$ in case the additional condition~$e \notin \limfunc{span}(X)$ holds (note that if~$e \in \limfunc{span}(X)$ holds, the test statistic~$T_B$ with~$B = C_X ee' C_X'$ is useless as it equals~$0$ for every~$y \in \R^n$). As pointed out in Remark~\ref{rem:exc}, this was already noted in~\cite{PP17}. For later use in Section~\ref{subs:approx} we state a corresponding result (which is an immediate consequence of Part~1 of Proposition~2.26 in \cite{PP17} together with~$G_X$-invariance of~$T_B$ and our Lemma~\ref{lem:sizecontrol}):

\begin{theorem}\label{thm:ee}
Suppose that~$k < n-1$, that Assumption~\ref{ASC} holds and that~$e \notin \limfunc{span}(X)$, where~$e$ is the vector figuring in Assumption~\ref{ASC}. Then, for every~$\alpha \in (0, 1)$, every~$\beta \in \R^k$ and every~$\sigma \in (0, \infty)$
\begin{equation}
\lim_{\rho \to a} P_{\beta, \sigma, \rho}(\Phi_{C_X ee' C_X', \kappa(\alpha)}) = 1.
\end{equation}
\end{theorem}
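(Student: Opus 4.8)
The plan is to deduce the statement directly from Part~1 of Proposition~2.26 in \cite{PP17}; the only preparatory work is to verify that the critical value~$\kappa(\alpha)$ is well defined and lies \emph{strictly} below the maximal value of the test statistic~$T_B$ with~$B := C_X e e' C_X'$.

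\textbf{Step 1: the matrix~$B$ and the critical value.} First I would note that the kernel of~$C_X$ equals~$\limfunc{span}(X)$ and that~$C_X$ is an isometry on~$\limfunc{span}(X)^{\bot}$; hence, since~$e \notin \limfunc{span}(X)$, we have~$C_X e \neq 0$, so~$B = (C_X e)(C_X e)'$ is a nonzero positive semidefinite rank-one matrix. Therefore~$\lambda_{n-k}(B) = \|C_X e\|^2 > 0$, and because~$n-k \geq 2$ the matrix~$B$ has a nontrivial kernel, so~$\lambda_1(B) = 0 < \lambda_{n-k}(B)$. Consequently Lemma~\ref{lem:sizecontrol} applies and shows that~$\kappa$ is a strictly decreasing continuous bijection of~$[0,1]$ onto~$[0, \|C_X e\|^2]$; in particular~$0 \leq \kappa(\alpha) < \kappa(0) = \|C_X e\|^2 = \lambda_{n-k}(B)$ for every~$\alpha \in (0,1)$. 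The strictness of this inequality, which uses~$\alpha > 0$, is what makes the conclusion possible; note that for~$\alpha = 0$ the power would \emph{not} converge to~$1$.

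\textbf{Step 2: reduction and the cited result.} Next I would observe that, by~$G_X$-invariance of~$T_B$, the rejection region~$\Phi_{B,\kappa(\alpha)}$ is~$G_X$-invariant, so~$P_{\beta,\sigma,\rho}(\Phi_{B,\kappa(\alpha)})$ does not depend on~$(\beta,\sigma)$ (cf.~Remark~\ref{rem:numericcv}); it thus suffices to argue under~$P_{0,1,\rho}$. Part~1 of Proposition~2.26 in \cite{PP17} gives, under Assumption~\ref{ASC} and~$e \notin \limfunc{span}(X)$, that~$T_B$ converges in~$P_{0,1,\rho}$-probability to its maximal value~$\|C_X e\|^2 = \lambda_{n-k}(B)$ as~$\rho \to a$; equivalently,~$P_{0,1,\rho}(T_B > c) \to 1$ for every~$c < \lambda_{n-k}(B)$. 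Taking~$c = \kappa(\alpha)$ and invoking Step~1 yields~$\lim_{\rho \to a}P_{0,1,\rho}(\Phi_{B,\kappa(\alpha)}) = 1$, which is the assertion.

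\textbf{Step 3: the heart of the matter.} For the reader's benefit I would also sketch why~$T_B$ concentrates at~$\|C_X e\|^2$, since this is where the only genuine subtlety lies. Writing~$\Pi = \Pi_{\limfunc{span}(X)^{\bot}}$ and using~$C_X' B C_X = \Pi e e' \Pi$ and~$\|C_X y\|^2 = \|\Pi y\|^2$, one has~$T_B(y) = \langle \Pi e, \Pi y\rangle^2/\|\Pi y\|^2$ for~$y \notin \limfunc{span}(X)$, a quantity~$\leq \|\Pi e\|^2 = \|C_X e\|^2$ with equality precisely when~$\Pi y$ is a nonzero multiple of~$\Pi e$. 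Under~$P_{0,1,\rho}$ we have~$\Pi y = \Pi u$ with~$\Pi u \sim N(0, \Pi\Sigma(\rho)\Pi)$; rescaling by~$\lambda_n(\Sigma(\rho))^{-1/2}$ and using Assumption~\ref{ASC} gives~$\lambda_n(\Sigma(\rho))^{-1/2}\Pi u \Rightarrow Z\Pi e$ with~$Z \sim N(0,1)$ and~$\Pi e \neq 0$. Since~$T_B$ depends on~$\Pi y$ only through its direction and is continuous on~$\R^{n-k}\setminus\{0\}$, while the limit law assigns no mass to the origin (because~$\Prob(Z = 0) = 0$), the continuous mapping theorem gives~$T_B(y) \Rightarrow \|\Pi e\|^2$, hence convergence in probability since the limit is constant. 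The main obstacle — and the reason it is cleaner to quote Proposition~2.26 than to reprove it here — is exactly this: the map~$y \mapsto T_B(y)$ is discontinuous at~$\Pi y = 0$, so one must check that the (rescaled) limiting distribution escapes that singular set, which holds here because~$\Pi e \neq 0$.
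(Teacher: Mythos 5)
Your argument is correct and follows exactly the same route the paper takes: the paper establishes Theorem~\ref{thm:ee} by combining Lemma~\ref{lem:sizecontrol} (to get $\kappa(\alpha)\in(\lambda_1(B),\lambda_{n-k}(B))$ for $B = C_Xee'C_X'$) with $G_X$-invariance and Part~1 of Proposition~2.26 in \cite{PP17}, precisely as in your Steps~1 and~2. Your Step~3 (the concentration of $T_B$ at its maximum via rescaling, Assumption~\ref{ASC}, scale invariance of $T_B$, and the continuous mapping theorem — with the careful observation that the limit law avoids the singular set $\{\Pi y = 0\}$ since $\Pi e \neq 0$) is a correct and useful unpacking of what the cited proposition encapsulates, but it is elaboration rather than a different proof strategy.
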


From this result we conclude that in case~$e \notin \limfunc{span}(X)$ and whenever a test~$\varphi$ with size~$\alpha$ is subject to the zero-power trap, one can alternatively use the test with rejection region~$\Phi_{C_Xee'C_X', \kappa(\alpha)}$ instead, which does not suffer from the zero-power trap. Moreover, the power of the test~$\Phi_{C_Xee'C_X', \kappa(\alpha)}$ even increases to~$1$ as~$\rho \to a$. This is a desirable property as it matches the intuition that strong correlations should be easily detectable from the data.

\smallskip

While avoiding the zero-power trap problem, the test~$\Phi_{C_Xee'C_X', \kappa(\alpha)}$ suffers from one major disadvantage: the power function of~$\Phi_{C_Xee'C_X', \kappa(\alpha)}$ can be, and often will be, quite low for values~$\rho \in (0, a)$ distant from~$a$. If the initial test~$\varphi$, which was dismissed because it is subject to the zero-power trap, was chosen because of its good power properties in this region of the alternative, the test~$\Phi_{C_X ee'C_X', \kappa(\alpha)}$ will then not constitute a convincing alternative. This is illustrated in the example discussed in Section~\ref{sec:num}. A method that tries to take optimality properties of the initial test into account, at least for the classes of tests discussed in Remark~\ref{rem:opttest}, is discussed next.

\subsection{Tests based on artificial regressors}\label{sec:artreg}

The sufficient condition for the zero-power trap in Theorem~\ref{lem:zptphiPP17} requires that the vector~$e$ from Assumption~\ref{ASC} is \emph{not} an element of~$\limfunc{span}(X)$. While this of course does not \emph{prove} that the zero-power trap does not arise if~$e \in \limfunc{span}(X)$, this indeed turns out to be the case under an additional assumption (cf.~Corollary~2.22 in \cite{PP17}). In this section we shall exploit this fact. The method of avoiding the zero-power trap we discuss in this section ``enforces'' the condition~$e \in \limfunc{span}(X)$. More specifically, it is based on adding the vector~$e$ from Assumption~\ref{ASC} as an ``artificial'' regressor to the design matrix (if it is not already an element of~$\limfunc{span}(X)$), and from then constructing tests as if this artificially expanded design matrix was the true one. As discussed in the Introduction, the idea underlying the construction in the present section can be traced back to \cite{kramer85}.

%In special cases the construction in the present section can be traced back to \cite{kramer85}, who considered testing for autocorrelation in a regression model with AR(1) errors with the Durbin-Watson test. Besides establishing conditions for the zero-power trap to occur, he noted, based on numerical results, that adding the intercept (which in this model is the vector~$e$ in Assumption~\ref{ASC}) to the regression and using the Durbin-Watson test in the expanded model helps to circumvent the zero-power trap. This numerical finding was later confirmed analytically in a series of papers discussed in the Introduction.

\smallskip

To formally describe the artificial regressor based method in our general setting, consider a situation where a  researcher initially wants to use the test~$\Phi_{B, \kappa(\alpha)}$ as in Section~\ref{sec:tests} with~$\lambda_1(B) < \lambda_{n-k}(B)$, but discovers (e.g., by checking the sufficient conditions in Theorem~\ref{lem:zptphiPP17}) that~$\Phi_{B, \kappa(\alpha)}$ suffers from the zero-power trap. Suppose further that the initial test~$\Phi_{B, \kappa(\alpha)}$ has certain optimality properties (cf.~Remark~\ref{rem:opttest}). The researcher does not want to completely sacrifice the optimality properties of the initial test, which prevents him from using the test just discussed in Section~\ref{sec:ee}. Assume further that~$e \notin \limfunc{span}(X)$.

The trick now is to work with the design matrix~$\bar{X} = (X, e)$ in the construction of a test statistic, assuming that~$k+1 < n$. More precisely, let~$\bar{B}$ be a symmetric~$(n - k - 1) \times (n - k - 1)$ matrix (cf.~Remark~\ref{rem:intbarB} below), and define the adjusted test statistic
\begin{equation}\label{eqn:adjtest}
\bar{T}_{\bar{B}}\left( y\right) =\bar{T}_{\bar{B},C_{\bar{X}}}\left( y\right) =
\begin{cases}
y^{\prime }C_{\bar{X}}^{\prime }\bar{B}C_{\bar{X}}y/\Vert {C_{\bar{X}}y}\Vert ^{2} & \text{if }%
y\notin \limfunc{span}(\bar{X}) \\ 
\lambda _{1}(\bar{B}) & \text{if }y\in \limfunc{span}(\bar{X}).%
\end{cases}
\end{equation}
Under the additional assumption that~$\lambda_1(\bar{B}) < \lambda_{n-k-1}(\bar{B})$, one obtains\footnote{To obtain this statement one needs to apply Lemma~\ref{lem:sizecontrol} to model \eqref{linmod} but with design matrix~$\bar{X}$ instead of~$X$. Note that this leads to an ``enlarged'' model that encompasses the true model as a submodel; and that the distributions satisfying the null hypothesis in the true model also satisfy the null hypothesis in the enlarged model.} from Lemma~\ref{lem:sizecontrol} for every~$\alpha \in (0, 1)$ the existence and uniqueness of a critical value~$\bar{\kappa}(\alpha) \in (\lambda_1(\bar{B}), \lambda_{n-k-1}(\bar{B}))$, say, such that for every~$\beta \in \R^k$ and every~$\sigma \in (0, \infty)$ it holds that
\begin{equation}\label{eqn:corsize}
P_{\beta, \sigma, 0}(\{y \in \R^n: \bar{T}_{\bar{B}}(y) > \bar{\kappa}(\alpha) \}) = \alpha.
\end{equation}
Finally, define the rejection region
\begin{equation}\label{eqn:rejadj}
\bar{\Phi}_{\bar{B}, \bar{\kappa}(\alpha)} := \{y \in \R^n: \bar{T}_{\bar{B}}(y) > \bar{\kappa}(\alpha) \}.
\end{equation}

\begin{remark}\label{rem:intbarB}
We think about~$\bar{B}$ as an ``updated version'' of~$B$, i.e., as the matrix one would use if~$\bar{X}$ was the underlying design matrix. For example, if the initial matrix~$B$ equals~$C_X\dot{\Sigma}(0) C_X'$ one could use~$\bar{B} = C_{\bar{X}}\dot{\Sigma}(0) C_{\bar{X}}'$, or if the initial matrix~$B = -(C_X\Sigma(\bar{\rho}) C_X')^{-1}$ one could use~$\bar{B} = -(C_{\bar{X}}\Sigma(\bar{\rho}) C_{\bar{X}}')^{-1}$. Recall that the rejection region \eqref{eqn:rejadj} based on these two versions of~$\bar{B}$ corresponds to locally best invariant tests and point-optimal invariant tests, respectively, in the model where the true design matrix is~$\bar{X}$ (cf.~Remark~\ref{rem:opttest}). 
\end{remark}
We shall now prove that the test with rejection region \eqref{eqn:rejadj} does not suffer from the zero-power trap. The following result requires an additional assumption on~$\Sigma(.)$. This is Assumption~4 in \cite{PP17} to which we refer the reader for equivalent formulations, examples and further discussion.

\begin{assumption}\label{as:add}
There exists a function~$c : [0,a)\to(0,\infty)$, a normalized vector~$e \in \R^n$, and a square root~$L_*(.)$ of~$\Sigma(.)$ such that
\begin{equation}
\Lambda := \lim_{\rho \to a} c(\rho) \Pi_{\limfunc{span}(e)^{\bot}} L_*(\rho)
\end{equation}
exists in~$\R^{n\times n}$ and such that the linear map~$\Lambda$ is injective when restricted to~$\limfunc{span}(e)^{\bot}$.
\end{assumption}

The main result concerning artificial regressor based tests is as follows:

\begin{theorem}\label{thm:artreg}
Suppose Assumptions~\ref{ASC} and~\ref{as:add} are satisfied with the same vector~$e$, that~$e \notin \limfunc{span}(X)$, and that~$k<n-1$. Suppose further that~$\bar{B}$ is a symmetric~$(n-k-1) \times (n-k-1)$ matrix such that~$\lambda_1(\bar{B}) < \lambda_{n-k-1}(\bar{B})$. Then, for every~$\alpha \in (0, 1)$, every~$\beta \in \R^k$ and every~$\sigma \in (0, \infty)$ it holds that
\begin{equation}
0 < \lim_{\rho \to a} P_{\beta, \sigma, \rho}(\bar{\Phi}_{\bar{B}, \bar{\kappa}(\alpha)}) = \mathrm{Pr}\left(\bar{T}_{\bar{B}}(\Lambda \mathbf{G}) > \bar{\kappa}(\alpha)\right) < 1,
\end{equation}
where~$\mathbf{G}$ denotes a Gaussian random vector with mean~$0$ and covariance matrix~$I_n$.
\end{theorem}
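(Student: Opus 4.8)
The plan is to reduce the claim to a computation of the limiting distribution of the (rescaled) data under $P_{\beta,\sigma,\rho}$ as $\rho\to a$, and then to apply Theorem~\ref{thm:ZPThl}-type reasoning together with the continuity and size-control machinery already available. First I would invoke $G_{\bar X}$-invariance: since the rejection region $\bar\Phi_{\bar B,\bar\kappa(\alpha)}$ is defined through $\bar T_{\bar B}$, which is $G_{\bar X}$-invariant, the probability $P_{\beta,\sigma,\rho}(\bar\Phi_{\bar B,\bar\kappa(\alpha)})$ does not depend on $\beta$ or $\sigma$, so it suffices to treat $\beta=0$, $\sigma=1$. Next I would note that under $P_{0,1,\rho}$ one can write $\mathbf y = L_*(\rho)\mathbf G$ for a standard Gaussian $\mathbf G$ and the square root $L_*(\rho)$ from Assumption~\ref{as:add}. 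Because $\bar T_{\bar B}$ is invariant under positive scaling and under addition of elements of $\limfunc{span}(\bar X)$, and since $e\in\limfunc{span}(\bar X)$, we have $\bar T_{\bar B}(\mathbf y) = \bar T_{\bar B}\bigl(c(\rho)\Pi_{\limfunc{span}(e)^{\bot}}L_*(\rho)\mathbf G + (\text{term in }\limfunc{span}(\bar X))\bigr)$ — more carefully, one projects off $\limfunc{span}(\bar X)$, but the key point is that the relevant part of $c(\rho)L_*(\rho)\mathbf G$ converges almost surely to $\Lambda\mathbf G$ by Assumption~\ref{as:add}.

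The main step is then a continuity/convergence-in-distribution argument: I would show that $\bar T_{\bar B}(c(\rho)L_*(\rho)\mathbf G)\to \bar T_{\bar B}(\Lambda\mathbf G)$ almost surely (hence in distribution), provided $\Lambda\mathbf G\notin\limfunc{span}(\bar X)$ almost surely, which is where the injectivity of $\Lambda$ restricted to $\limfunc{span}(e)^{\bot}$ is used: it guarantees that $\Lambda\mathbf G$ is (almost surely) not annihilated by $C_{\bar X}$ in a degenerate way, so that $C_{\bar X}\Lambda\mathbf G\neq 0$ a.s. and the first branch of the definition of $\bar T_{\bar B}$ applies. Granting this, one gets $P_{0,1,\rho}(\bar\Phi_{\bar B,\bar\kappa(\alpha)})\to \mathrm{Pr}(\bar T_{\bar B}(\Lambda\mathbf G)>\bar\kappa(\alpha))$, using that the limiting distribution of $\bar T_{\bar B}(\Lambda\mathbf G)$ assigns no mass to the single point $\bar\kappa(\alpha)$ — this last fact follows because $\bar T_{\bar B}(\Lambda\mathbf G)$ is a ratio of quadratic forms in a nondegenerate Gaussian, hence has a continuous distribution on the interior of $[\lambda_1(\bar B),\lambda_{n-k-1}(\bar B)]$, so the convergence $P_{0,1,\rho}\to$ limit holds at the continuity point $\bar\kappa(\alpha)$.

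Finally I would establish the strict inequalities $0 < \mathrm{Pr}(\bar T_{\bar B}(\Lambda\mathbf G)>\bar\kappa(\alpha)) < 1$. For the upper bound: since $\lambda_1(\bar B)<\lambda_{n-k-1}(\bar B)$ and $C_{\bar X}\Lambda\mathbf G$ ranges (a.s.) over a set with nonempty interior in $\R^{n-k-1}$ — here injectivity of $\Lambda|_{\limfunc{span}(e)^{\bot}}$ ensures $C_{\bar X}\Lambda$ has rank $n-k-1$, so $C_{\bar X}\Lambda\mathbf G$ is a nondegenerate $(n-k-1)$-variate Gaussian — the ratio $\bar T_{\bar B}(\Lambda\mathbf G)$ takes values near $\lambda_1(\bar B)<\bar\kappa(\alpha)$ with positive probability, giving strictly positive probability of \emph{non}-rejection. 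For the lower bound: symmetrically, $\bar T_{\bar B}(\Lambda\mathbf G)$ takes values near $\lambda_{n-k-1}(\bar B)>\bar\kappa(\alpha)$ with positive probability. This is exactly the content of Part~1 of Proposition~2.26 in \cite{PP17} applied in the enlarged model with design matrix $\bar X$, so I would cite that rather than reprove it.

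The step I expect to be the main obstacle is verifying that $C_{\bar X}\Lambda$ has full row rank $n-k-1$ (equivalently, that $\Lambda\mathbf G\notin\limfunc{span}(\bar X)$ a.s.): Assumption~\ref{as:add} only gives injectivity of $\Lambda$ on $\limfunc{span}(e)^{\bot}$, and one must carefully combine this with the structure of $\bar X=(X,e)$ — in particular that $\Pi_{\limfunc{span}(e)^{\bot}}$ already appears in the definition of $\Lambda$, so that $\limfunc{range}(\Lambda)\subseteq\limfunc{span}(e)^{\bot}$ — to conclude that $\limfunc{range}(\Lambda)$ has trivial intersection with $\limfunc{span}(\bar X)\cap\limfunc{span}(e)^{\bot}=\limfunc{span}(X)\cap\limfunc{span}(e)^{\bot}$ only where needed. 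Once the rank statement is in hand, the rest is the routine continuous-mapping argument plus an appeal to the cited proposition and to Lemma~\ref{lem:sizecontrol} for the existence of $\bar\kappa(\alpha)$.
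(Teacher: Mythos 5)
Your proposal is substantively correct but takes a genuinely different and longer route than the paper. The paper's proof is two lines: it passes to the enlarged model with design matrix~$\bar X = (X,e)$ (where $\gamma$, the coefficient on~$e$, is a new nuisance parameter), observes that~$P_{\beta,\sigma,\rho} = \bar P_{(\beta,0),\sigma,\rho}$, and then cites Corollary~2.22 of~\cite{PP17} wholesale in that enlarged model to get the entire display in one step. Your proposal effectively re-derives the content of that corollary from scratch out of Assumption~\ref{as:add}: pass to~$\beta=0,\sigma=1$ by invariance, write~$\mathbf y = L_*(\rho)\mathbf G$, use~$G_{\bar X}$-invariance (and~$e\in\limfunc{span}(\bar X)$, so that~$\limfunc{span}(\bar X)^\bot\subseteq\limfunc{span}(e)^\bot$) to reduce to~$\bar T_{\bar B}\bigl(c(\rho)\Pi_{\limfunc{span}(e)^\bot}L_*(\rho)\mathbf G\bigr)$, take the a.s.\ limit to~$\bar T_{\bar B}(\Lambda\mathbf G)$, and then extract convergence of rejection probabilities from continuity of the limiting cdf. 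That is a valid and self-contained alternative to the citation; what it loses is brevity, and it also obliges you to verify the "continuity point" and "non-degeneracy" facts yourself rather than inheriting them.

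Two points to tighten. First, the citation you append at the end — "this is exactly the content of Part~1 of Proposition~2.26 in~\cite{PP17}" — is misattributed: Proposition~2.26(1) is the result the paper uses for Theorem~\ref{thm:ee}, i.e.\ it concerns the specific choice~$B=C_Xee'C_X'$ and limiting power~$1$, which is not what you need here (for a general~$\bar B$ the limiting power is strictly between~$0$ and~$1$). The correct reference for the combined limit-plus-strict-inequalities statement in the enlarged model is Corollary~2.22 of~\cite{PP17}. Since you also give a direct argument, this is more a wrong pointer than a gap, but as written the appeal does not support the claim.

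Second, the step you flag as "the main obstacle" — that~$C_{\bar X}\Lambda$ has full row rank~$n-k-1$ — resolves cleanly and is not actually a source of difficulty. By Assumption~\ref{as:add}, $\Lambda$ maps into~$\limfunc{span}(e)^\bot$ and is injective on~$\limfunc{span}(e)^\bot$, hence is a linear bijection of~$\limfunc{span}(e)^\bot$ onto itself, so~$\limfunc{range}(\Lambda)=\limfunc{span}(e)^\bot$ and~$\limfunc{rank}(\Lambda)=n-1$. Since~$e\in\limfunc{span}(\bar X)$, we have~$\ker(C_{\bar X})=\limfunc{span}(\bar X)$ and~$\dim\bigl(\limfunc{span}(\bar X)\cap\limfunc{span}(e)^\bot\bigr)=k$, so~$C_{\bar X}$ restricted to~$\limfunc{span}(e)^\bot$ has rank~$(n-1)-k = n-k-1$. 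Composing,~$C_{\bar X}\Lambda$ has rank~$n-k-1$, which gives both~$\Lambda\mathbf G\notin\limfunc{span}(\bar X)$ a.s.\ and non-degeneracy of the~$(n-k-1)$-variate Gaussian~$C_{\bar X}\Lambda\mathbf G$, which in turn gives (via~$\bar\kappa(\alpha)\in(\lambda_1(\bar B),\lambda_{n-k-1}(\bar B))$, from Lemma~\ref{lem:sizecontrol}) the two strict inequalities.
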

Theorem~\ref{thm:artreg} shows that~$\bar{\Phi}_{\bar{B}, \bar{\kappa}(\alpha)}$ is not subject to the zero-power trap. However, its ``limiting power''~$\lim_{\rho \to a} P_{\beta, \sigma, \rho}(\bar{\Phi}_{\bar{B}, \bar{\kappa}(\alpha)}) = \mathrm{Pr}(\bar{T}_{\bar{B}}(\Lambda \mathbf{G}) > \bar{\kappa}(\alpha))$ can in principle be low. In particular, it is always smaller than one. This is different to the behavior of the test discussed in Section~\ref{sec:ee}, which has limiting power equal to one. Another limitation of Theorem~\ref{thm:artreg} is its reliance on the additional Assumption~\ref{as:add}.

Following up on the examples discussed in Remark~\ref{rem:intbarB}, an advantage of passing from~$\Phi_{B, \kappa(\alpha)}$ to~$\Phi_{\bar{B}, \bar{\kappa}(\alpha)}$, instead of passing from~$\Phi_{B, \kappa(\alpha)}$ to the test discussed in Section~\ref{sec:ee}, is that~$\Phi_{\bar{B}, \bar{\kappa}(\alpha)}$ ``preserves'' in some sense the optimality properties of~$\Phi_{B, \kappa(\alpha)}$, but with respect to the larger group~$G_{\bar{X}}$. Note, however, that this does \emph{not} imply that the power functions of~$\Phi_{B, \kappa(\alpha)}$ and~$\bar{\Phi}_{\bar{B}, \bar{\kappa}(\alpha)}$ are ``close''. 

%In the subsequent section we introduce an approach that leads to a test whose power converges to~$1$ as~$\rho \to a$, and whose power function is approximately the same as the power function of the initial test. In this sense, it combines and even improves the advantages of the tests introduced so far.

\subsection{Optimality-preserving tests that avoid the zero-power trap}\label{subs:approx}

The starting point in this section is an (initial) family of tests~$\varphi_{\alpha}: \R^n \to [0, 1]$ for the testing problem~\eqref{testproblem} indexed by~$\alpha \in (0, 1)$. Given~$\alpha \in (0, 1)$ we interpret~$\varphi_{\alpha}$ as the (initial) test one would like to use because of some optimality property. That is, the power function of~$\varphi_{\alpha}$~$$(\beta, \sigma, \rho) \mapsto E_{\beta, \sigma, \rho}(\varphi_{\alpha})$$ is ``large'' for certain parameter values~$(\beta, \sigma, \rho)$ in a given subset pertaining to the alternative hypothesis~$\{0\} \times (0, \infty) \times (0, a)$. 

We shall suppose that the initial test~$\varphi_{\alpha}$ suffers from the zero-power trap, which one would like to avoid. Ideally a test should have limiting power equal to~$1$, a property of the test in Section~\ref{sec:ee}, but not of the test in Section~\ref{sec:artreg}. Furthermore, we would like to keep, at least approximately, the optimal power properties of~$\varphi_{\alpha}$, which was the reason why~$\varphi_{\alpha}$ was considered for use initially. This is a property of the test in Section~\ref{sec:artreg} (at least to some extent), but not of the test in Section~\ref{sec:ee}. We shall now present an approach that achieves these two goals.

\smallskip

In what follows, we assume that the family of tests~$\{\varphi_\alpha\}$ under consideration satisfies Property~A, i.e., satisfies the following:

\medskip

\begin{addmargin}[1em]{2em}%
\begin{enumerate}[{A}.1:]
\item For every~$\alpha \in (0, 1)$ the test~$\varphi_{\alpha}$ is~$G_X$-invariant.
\item For every~$\alpha \in (0, 1)$ the test~$\varphi_{\alpha}$ has size~$\alpha$, i.e.,~$$\sup_{\beta \in \R^k} \sup_{\sigma \in (0, \infty)} E_{\beta, \sigma, 0}(\varphi_{\alpha}) = \alpha.$$ 
%\item For real numbers~$\alpha_1 \leq \alpha_2$ in~$(0, 1)$ it holds that~$\varphi_{\alpha_1} \leq \varphi_{\alpha_2}$.
\item For every~$\alpha \in (0, 1)$ and every sequence~$\alpha_m \in [0, \alpha]$ converging to~$\alpha$ we have that~$\varphi_{\alpha_m}(y) \to \varphi_{\alpha}(y)$ holds for~$\mu_{\R^n}$-almost every~$y\in \R^n$.
\end{enumerate}
\end{addmargin}

\bigskip

To illustrate the assumption, consider the following important example: 
\begin{example}\label{rem:exenh}
Let~$T_{B}$ be as in~\eqref{T_quadratic} with~$B$ an~$(n-k)\times (n-k)$ symmetric matrix such that~$\lambda_{1}(B) < \lambda_{n-k}(B)$. For every~$\alpha \in (0, 1)$ let~$\kappa(\alpha)$ be the critical value from Lemma~\ref{lem:sizecontrol}. Set~$\varphi_{\alpha}$ equal to the non-randomized test with rejection region~$\Phi_{B, \kappa(\alpha)}$, i.e.,~$\varphi_{\alpha} := \mathbf{1}_{ \Phi_{B, \kappa(\alpha)} }$. We already know that~$T_B$ is~$G_X$-invariant, and thus~$\varphi_{\alpha}$ is~$G_X$-invariant for every~$\alpha$. Hence~A.1 is satisfied. Furthermore, from Lemma~\ref{lem:sizecontrol} we see that~$\varphi_{\alpha}$ satisfies~A.2. That~A.3 is satisfied is an immediate consequence of continuity of~$\kappa(.)$, which was established in Lemma~\ref{lem:sizecontrol}, together with the fact that for every~$\alpha \in (0, 1)$ the set
\begin{equation}
\{ y \in \R^n: T_B(y) = \kappa(\alpha)\}
\end{equation}
is a~$\mu_{\R^n}$-null set; the latter is a consequence of Lemma~B.4 in \cite{PP17}, which shows that the cdf.~$F$, say, corresponding to~$P_{0, 1, 0} \circ T_{B}$ is continuous.
\end{example}

\begin{remark}\label{rem:ascending}
While not required in Property~A, typical families~$\{\varphi_{\alpha}\}$ will also satisfy the condition that for any real numbers~$\alpha_1 \leq \alpha_2$ in~$(0, 1)$ it holds for~$\mu_{\R^n}$-almost every~$y\in \R^n$ that~$\varphi_{\alpha_1}(y) \leq \varphi_{\alpha_2}(y)$. For instance, this is the case for the families of tests discussed in Example~\ref{rem:exenh} (this follows from the monotonicity property of~$\kappa(.)$ established in Lemma~\ref{lem:sizecontrol}).  One obvious consequence of this condition is that if~$\varphi_{\alpha_2}$ suffers from the zero-power trap, then~$\varphi_{\alpha_1}$ suffers from the zero-power trap as well. Therefore, for such families, if~$\varphi_{\alpha}$ suffers from the zero-power trap, there is no hope that one can easily avoid the zero-power trap by using~$\varphi_{\alpha - \varepsilon}$ for some~$\varepsilon > 0$ (which would at least be a test whose size does not exceed~$\alpha$).
\end{remark}

Suppose in the following discussion that~$k<n-1$, that Assumption~\ref{ASC} holds and that~$e \notin \limfunc{span}(X)$. Recall from Theorem~\ref{thm:ee} that under these conditions the~$G_X$-invariant test $\Phi_{C_Xee'C_X', \kappa(\alpha)}$ does not suffer from the zero-power trap, in fact has limiting power one, at all levels~$\alpha \in (0, 1)$. Using this property, we shall now define a~$G_X$-invariant test that has approximately the same power properties of~$\varphi_{\alpha}$ with the advantage that it has limiting power~$1$ just as the test~$\Phi_{C_Xee'C_X', \kappa(\alpha)}$. 

The basic idea is as follows (precise statements are provided further below): From Property~A.3 one obtains that for~$\varepsilon \in (0, \alpha)$ small, the power functions of~$\varphi_{\alpha}$ and~$\varphi_{\alpha - \varepsilon}$ are similar. Theorem~\ref{thm:ee} tells us that the test with rejection region~$\Phi_{C_Xee'C_X', \kappa(\varepsilon)}$ has limiting power (as~$\rho \to a$) equal to~$1$, and Lemma~\ref{lem:sizecontrol} shows that this test has size equal to~$\varepsilon$. Hence, we \emph{could} use the~$G_X$-invariant test~
\begin{equation}\label{eqn:couldusethis}
\min(\varphi_{\alpha - \varepsilon} + \mathbf{1}_{\Phi_{C_Xee'C_X', \kappa(\varepsilon)}}, 1),
\end{equation} whose power function is similar to~$\varphi_{\alpha}$ (at least for~$\varepsilon$ small), but which has limiting power equal to one (for every~$0 < \varepsilon < \alpha$). Trivially, this test has size not greater than~$\alpha$, but potentially its size is smaller than~$\alpha$, implying some unnecessary loss in power, which one can try to avoid by decreasing~$\kappa(\varepsilon)$. 

More specifically, define the~$G_X$-invariant test
\begin{equation}\label{eqn:enhtest}
\varphi^*_{\alpha, \varepsilon} := \min \left(\varphi_{\alpha - \varepsilon} + \mathbf{1}_{\Phi_{C_Xee'C_X', c(\alpha, \varepsilon)}},~ 1\right) = \varphi_{\alpha - \varepsilon} + (1-\varphi_{\alpha - \varepsilon})\mathbf{1}_{\Phi_{C_Xee'C_X', c(\alpha, \varepsilon)}},
\end{equation}
where~$0 < c(\alpha, \varepsilon) \leq \kappa(\varepsilon)$ is chosen to be the \emph{smallest} number such that~$\varphi^*_{\alpha, \varepsilon}$ has size \emph{equal} to~$\alpha$. That such a choice of~$c(\alpha, \varepsilon)$ is indeed possible is the content of the next proposition. Note that~$\varphi^*_{\alpha, \varepsilon}$ is non-randomized if the test~$\varphi_{\alpha - \varepsilon}$ is non-randomized. 
\begin{proposition}\label{prop:excrit}
Suppose that~$k < n-1$, that~$e \in \R^n$ satisfies~$e \notin \limfunc{span}(X)$, and that the family~$\{\varphi_{\alpha}\}$ satisfies Properties~A.1 and~A.2. Then, for every~$\alpha \in (0, 1)$ and every~$\varepsilon \in (0, \alpha)$ there exists a~$c(\alpha, \varepsilon) \in (0, \kappa(\varepsilon)]$ such that 
\begin{equation}\label{eqn:defc}
\sup_{\beta \in \R^k} \sup_{\sigma \in (0, \infty)} E_{\beta, \sigma, 0}\left[ \min \left(\varphi_{\alpha - \varepsilon} + \mathbf{1}_{\Phi_{C_Xee'C_X', c(\alpha, \varepsilon)}},~ 1\right) \right] =  \alpha,
\end{equation}
and such that for every~$c' \in (0, c(\alpha, \varepsilon))$ it holds that the supremum in the previous display is greater than~$\alpha$; here~$\kappa(\varepsilon) \in (0, \|C_Xe\|^2)$ denotes the unique real number such that~$\Phi_{C_Xee'C_X', \kappa(\varepsilon)}$ has size equal to~$\varepsilon$ (cf.~Lemma~\ref{lem:sizecontrol}).
\end{proposition}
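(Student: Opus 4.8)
The plan is to fix $\alpha \in (0,1)$ and $\varepsilon \in (0,\alpha)$ and reduce the proposition to an application of the intermediate value theorem to a single one-dimensional function of the candidate constant $c$. First I would observe that for every $c \in \R$ the test $\min(\varphi_{\alpha-\varepsilon} + \mathbf{1}_{\Phi_{C_X ee' C_X', c}}, 1)$ is $G_X$-invariant: $\varphi_{\alpha-\varepsilon}$ is $G_X$-invariant by Property~A.1, and $\mathbf{1}_{\Phi_{C_X ee' C_X', c}}$ is $G_X$-invariant because $T_{C_X ee' C_X'}$ is $G_X$-invariant and $\Phi_{C_X ee' C_X', c}$ is one of its superlevel sets. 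Hence, by the invariance fact recalled in Remark~\ref{rem:numericcv}, its rejection probability under $P_{\beta,\sigma,0}$ does not depend on $(\beta,\sigma)$, so that, with a generic $c$ in the role of $c(\alpha,\varepsilon)$, the left-hand side of~\eqref{eqn:defc} equals
\begin{equation*}
\psi(c):=E_{0,1,0}\left[\min\left(\varphi_{\alpha-\varepsilon}+\mathbf{1}_{\Phi_{C_X ee' C_X',c}},1\right)\right]=(\alpha-\varepsilon)+\nu\big((c,\infty)\big),
\end{equation*}
where I have used the identity $\min(\varphi+\mathbf{1}_A,1)=\varphi+(1-\varphi)\mathbf{1}_A$ valid for $[0,1]$-valued $\varphi$, the equality $E_{0,1,0}[\varphi_{\alpha-\varepsilon}]=\alpha-\varepsilon$ (Properties~A.1 and~A.2), and the notation $\nu(\cdot):=E_{0,1,0}\big[(1-\varphi_{\alpha-\varepsilon})\mathbf{1}_{\{T_{C_X ee' C_X'}\in\,\cdot\,\}}\big]$ for the finite Borel measure of total mass $1-(\alpha-\varepsilon)$. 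The proposition then reduces to showing that $\psi$ attains the value $\alpha$ on $(0,\kappa(\varepsilon)]$ and to taking $c(\alpha,\varepsilon)$ to be the smallest such point.

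Second I would establish three facts about $\psi$, regarded as a function on $[0,\kappa(\varepsilon)]$. (i)~$\psi$ is nonincreasing, since $c\mapsto\nu((c,\infty))$ is. (ii)~$\psi$ is continuous: because $k<n-1$ and $e\notin\limfunc{span}(X)$ one has $C_X e\neq 0$ and $\lambda_1(C_X ee' C_X')=0<\|C_X e\|^2=\lambda_{n-k}(C_X ee' C_X')$, so the law $P_{0,1,0}\circ T_{C_X ee' C_X'}$ has no atoms --- this is the same fact used in Example~\ref{rem:exenh} (via Lemma~B.4 in \cite{PP17}), and it can also be seen directly since $C_X y\sim N(0,I_{n-k})$ under $P_{0,1,0}$ makes $T_{C_X ee' C_X'}(y)=\big((C_X e)'C_X y\big)^2/\|C_X y\|^2$ a scaled Beta-distributed variable; consequently $\nu$ is atomless and the survival function $c\mapsto\nu((c,\infty))$, being nonincreasing, right-continuous and (by atomlessness) left-continuous, is continuous. (iii)~the boundary values: on the one hand $\nu((\kappa(\varepsilon),\infty))\le P_{0,1,0}(T_{C_X ee' C_X'}>\kappa(\varepsilon))=P_{0,1,0}(\Phi_{C_X ee' C_X',\kappa(\varepsilon)})=\varepsilon$ (the size of that $G_X$-invariant test), so $\psi(\kappa(\varepsilon))\le(\alpha-\varepsilon)+\varepsilon=\alpha$; on the other hand $T_{C_X ee' C_X'}(y)=\big((C_X e)'C_X y\big)^2/\|C_X y\|^2>0$ for $P_{0,1,0}$-almost every $y$, since $(C_X e)'C_X y$ is a nondegenerate Gaussian under $P_{0,1,0}$, whence $\mathbf{1}_{\Phi_{C_X ee' C_X',c}}=1$ holds $P_{0,1,0}$-a.s.\ for every $c\le 0$ and $\psi(0)=1$.

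Finally I would conclude by the intermediate value theorem. Since $\psi$ is continuous and nonincreasing on $[0,\kappa(\varepsilon)]$ with $\psi(0)=1>\alpha$ (using $\alpha<1$) and $\psi(\kappa(\varepsilon))\le\alpha$, the level set $S:=\{c\in[0,\kappa(\varepsilon)]:\psi(c)=\alpha\}$ is nonempty and closed, and continuity together with $\psi(0)>\alpha$ forces $\inf S>0$. Setting $c(\alpha,\varepsilon):=\min S$ therefore yields $c(\alpha,\varepsilon)\in(0,\kappa(\varepsilon)]$ with $\psi(c(\alpha,\varepsilon))=\alpha$, which is exactly~\eqref{eqn:defc}; and for $c'\in(0,c(\alpha,\varepsilon))$ monotonicity gives $\psi(c')\ge\psi(c(\alpha,\varepsilon))=\alpha$ while $c'<\min S$ gives $c'\notin S$, hence $\psi(c')>\alpha$, so the supremum in~\eqref{eqn:defc} (with $c'$ in place of $c(\alpha,\varepsilon)$) exceeds $\alpha$; this also shows $c(\alpha,\varepsilon)$ is the smallest admissible constant. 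I expect the only genuinely non-routine step to be the atomlessness of $P_{0,1,0}\circ T_{C_X ee' C_X'}$ used for the continuity of $\psi$ in step~(ii) --- but this is precisely the fact already invoked in Example~\ref{rem:exenh}, so the remaining work is elementary manipulation of monotone functions and I do not anticipate a serious obstacle.
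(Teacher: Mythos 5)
Your proof is correct and follows essentially the same route as the paper's: reduce to $E_{0,1,0}$ by $G_X$-invariance, show the one-dimensional function of $c$ is continuous and nonincreasing (both arguments ultimately rest on the atomlessness of $P_{0,1,0}\circ T_{C_Xee'C_X'}$ supplied by Lemma~B.4 of \cite{PP17}), pin down boundary values, and apply the intermediate value theorem, taking $c(\alpha,\varepsilon)$ to be the left endpoint of the level set. The one small organizational difference is that you restrict attention to $[0,\kappa(\varepsilon)]$ from the outset and show $\psi(\kappa(\varepsilon))\le\alpha$ directly, whereas the paper works on $[0,\|C_Xe\|^2]$ and then establishes $c(\alpha,\varepsilon)\le\kappa(\varepsilon)$ by a separate short contradiction argument; your variant streamlines this last step without changing the substance.
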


Note that the critical value~$c(\alpha, \varepsilon)$ can be easily determined numerically by a simple line search algorithm,~cf.~also Remark~\ref{rem:numericcv}.

Having established that the test in Equation~\eqref{eqn:enhtest} is actually well-defined, we now prove that it does not suffer from the zero-power trap but has limiting power~$1$ for any choice of~$\varepsilon$. Furthermore, we show that the power function of~$\varphi^*_{\alpha, \varepsilon}$ approximates (even uniformly over suitable subsets of the parameter space) the power function of~$\varphi_{\alpha}$ as~$\varepsilon$ converges to~$0$. In this sense, choosing~$\varepsilon > 0$ small, the test~$\varphi^*_{\alpha, \varepsilon}$ preserves ``optimal'' power properties (such as point-optimal invariance, or locally best invariance, cf.~Example~\ref{rem:exenh} above) from~$\varphi_{\alpha}$ at least approximately. Furthermore, the degree of approximation can be tuned by the user via~$\varepsilon$.

\begin{theorem}\label{thm:enhthm}
Suppose that~$k < n-1$, that Assumption~\ref{ASC} holds and that~$e \notin \limfunc{span}(X)$, where~$e$ is the vector figuring in Assumption~\ref{ASC}. Assume that the family~$\{\varphi_{\alpha}\}$ satisfies Properties~A.1 and~A.2. Let~$\alpha \in (0, 1)$. Then, the following holds:
\begin{enumerate}
\item For every~$\varepsilon \in (0, \alpha)$, every~$\beta \in \R^k$ and every~$\sigma \in (0, \infty)$ we have~$$\lim_{\rho \to a} E_{\beta, \sigma, \rho}(\varphi^*_{\alpha, \varepsilon}) = 1;$$ in particular~$\varphi^*_{\alpha, \varepsilon}$ does \emph{not} suffer from the zero-power trap.
\item Suppose that the family~$\{\varphi_{\alpha}\}$ also satisfies Property~A.3. Let~$A \subseteq [0, a)$ be such that the closure of the set
\begin{equation}
\{\Sigma(\rho)/\|\Sigma(\rho)\|: \rho \in A\}
\end{equation}
is contained in the set of positive definite symmetric matrices. Then
\begin{equation}
\lim_{\varepsilon \to 0^+} \sup_{\beta \in \R^k} \sup_{\sigma \in (0, \infty)} \sup_{\rho \in A} |E_{\beta, \sigma, \rho}(\varphi^*_{\alpha, \varepsilon}) - E_{\beta, \sigma, \rho}(\varphi_{\alpha})| = 0.
\end{equation}
\end{enumerate}
\end{theorem}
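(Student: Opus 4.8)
The plan is to prove the two parts separately, exploiting the decomposition
$\varphi^*_{\alpha, \varepsilon} = \varphi_{\alpha - \varepsilon} + (1-\varphi_{\alpha - \varepsilon})\mathbf{1}_{\Phi_{C_Xee'C_X', c(\alpha, \varepsilon)}}$,
which makes the ``enhancement'' structure transparent. For Part~1, fix $\varepsilon \in (0, \alpha)$, $\beta \in \R^k$ and $\sigma \in (0,\infty)$. The key observation is the pointwise bound $\varphi^*_{\alpha, \varepsilon} \geq \mathbf{1}_{\Phi_{C_Xee'C_X', c(\alpha, \varepsilon)}}$, which holds everywhere because $\varphi_{\alpha-\varepsilon} \geq 0$ and the second summand already dominates the indicator. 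Hence $E_{\beta, \sigma, \rho}(\varphi^*_{\alpha, \varepsilon}) \geq P_{\beta, \sigma, \rho}(\Phi_{C_Xee'C_X', c(\alpha, \varepsilon)})$. Since $c(\alpha, \varepsilon) \leq \kappa(\varepsilon)$, monotonicity of the rejection region in the critical value gives $\Phi_{C_Xee'C_X', c(\alpha, \varepsilon)} \supseteq \Phi_{C_Xee'C_X', \kappa(\varepsilon)}$, so $E_{\beta, \sigma, \rho}(\varphi^*_{\alpha, \varepsilon}) \geq P_{\beta, \sigma, \rho}(\Phi_{C_Xee'C_X', \kappa(\varepsilon)})$. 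Now apply Theorem~\ref{thm:ee} (whose hypotheses $k<n-1$, Assumption~\ref{ASC}, $e\notin\limfunc{span}(X)$ are exactly those assumed here) to conclude that the right-hand side tends to $1$ as $\rho \to a$. Since the power function is bounded above by $1$, this forces $\lim_{\rho\to a} E_{\beta,\sigma,\rho}(\varphi^*_{\alpha,\varepsilon}) = 1$, which in particular rules out the zero-power trap.

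For Part~2, the strategy is to bound the difference $|E_{\beta,\sigma,\rho}(\varphi^*_{\alpha,\varepsilon}) - E_{\beta,\sigma,\rho}(\varphi_\alpha)|$ by two pieces via the triangle inequality through the intermediate quantity $E_{\beta,\sigma,\rho}(\varphi_{\alpha-\varepsilon})$. From the decomposition, $0 \leq \varphi^*_{\alpha,\varepsilon} - \varphi_{\alpha-\varepsilon} = (1-\varphi_{\alpha-\varepsilon})\mathbf{1}_{\Phi_{C_Xee'C_X', c(\alpha, \varepsilon)}} \leq \mathbf{1}_{\Phi_{C_Xee'C_X', c(\alpha, \varepsilon)}}$, so the first piece is controlled by $P_{\beta,\sigma,\rho}(\Phi_{C_Xee'C_X', c(\alpha,\varepsilon)})$. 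The crucial point is that, by $G_X$-invariance, this probability does not depend on $\beta,\sigma$, and one needs it to go to $0$ uniformly over $\rho \in A$ as $\varepsilon \to 0^+$. Here I would use the fact that $c(\alpha,\varepsilon) \leq \kappa(\varepsilon)$ together with the construction of $c(\alpha,\varepsilon)$ from Proposition~\ref{prop:excrit}: as $\varepsilon \to 0$, we have $\kappa(\varepsilon) \to \|C_Xe\|^2 = \lambda_{n-k}(C_Xee'C_X')$ by continuity of $\kappa(\cdot)$ (Lemma~\ref{lem:sizecontrol}), so the rejection region $\Phi_{C_Xee'C_X', c(\alpha,\varepsilon)}$ — while possibly larger than $\Phi_{C_Xee'C_X', \kappa(\varepsilon)}$ — still has probability (under the relevant rescaled Gaussian laws indexed by $\rho \in A$, which lie in a compact set of positive definite covariance directions by hypothesis on $A$) converging to $0$ uniformly; one expresses $P_{0,1,\rho}(\Phi_{C_Xee'C_X', c})$ as $\mathrm{Pr}(\mathbf{G}'[C_X\Sigma(\rho)C_X' / \|\cdot\| - \text{scaling}]\mathbf{G} > 0)$ type expressions, uses the compactness of $\{\Sigma(\rho)/\|\Sigma(\rho)\|:\rho\in A\}^{-}$ in the positive definite cone, and a continuity/dominated-convergence argument to get the uniform bound.

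The second piece, $|E_{\beta,\sigma,\rho}(\varphi_{\alpha-\varepsilon}) - E_{\beta,\sigma,\rho}(\varphi_\alpha)|$, is where Property~A.3 enters. By the hypothesis on $A$, the rescaled measures $P_{\beta,\sigma,\rho}(\cdot)$ for $\rho \in A$, suitably normalized, form a family whose densities are locally uniformly controlled, and since $\varphi_{\alpha-\varepsilon} \to \varphi_\alpha$ $\mu_{\R^n}$-a.e.\ as $\varepsilon \to 0$ (Property~A.3, using that $\alpha - \varepsilon \to \alpha$ from below), a dominated-convergence argument — made uniform in $(\beta,\sigma,\rho)$ by first reducing to $\beta = 0$, $\sigma = 1$ via $G_X$-invariance and then handling the $\rho$-dependence through the compactness of the normalized covariance set — yields $\sup_{\beta,\sigma}\sup_{\rho\in A}|E_{\beta,\sigma,\rho}(\varphi_{\alpha-\varepsilon}) - E_{\beta,\sigma,\rho}(\varphi_\alpha)| \to 0$. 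I expect this uniform-integrability/uniform-convergence step to be the main obstacle: one must argue that a.e.\ convergence of the $\varphi_{\alpha_m}$ upgrades to convergence of expectations \emph{uniformly} over the three-parameter family, and the right technical device is to transform all these expectations into integrals against a single reference measure on the sphere in $\R^{n-k}$ (exploiting $G_X$-invariance to quotient out $\beta$ and $\sigma$), on which the normalized alternative distributions vary continuously over the compact parameter set $A^{-}$, so that Egorov/dominated convergence applies uniformly. Combining the two pieces via the triangle inequality and letting $\varepsilon \to 0^+$ completes the proof.
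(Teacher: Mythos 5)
Your Part~1 is correct and essentially the paper's argument, modulo a mild detour: the paper observes directly that $c(\alpha,\varepsilon)\in(0,\|C_Xe\|^2)$ equals $\kappa(\alpha^*)$ for some $\alpha^*\in(0,1)$ and applies Theorem~\ref{thm:ee} to $\Phi_{C_Xee'C_X',c(\alpha,\varepsilon)}$ itself, whereas you use $c(\alpha,\varepsilon)\leq\kappa(\varepsilon)$ to pass to the smaller region $\Phi_{C_Xee'C_X',\kappa(\varepsilon)}$. Both routes land on the same limit.

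Your Part~2, however, has a genuine gap in the first piece of the triangle-inequality decomposition. You write $0 \leq \varphi^*_{\alpha,\varepsilon}-\varphi_{\alpha-\varepsilon} = (1-\varphi_{\alpha-\varepsilon})\mathbf{1}_{\Phi_{C_Xee'C_X',c(\alpha,\varepsilon)}}$, drop the factor $(1-\varphi_{\alpha-\varepsilon})$, and then assert that $P_\rho(\Phi_{C_Xee'C_X',c(\alpha,\varepsilon)})\to 0$ uniformly over $\rho\in A$ as $\varepsilon\to 0^+$. This is false in general. The only control one has on $c(\alpha,\varepsilon)$ is $\kappa(\alpha)\leq c(\alpha,\varepsilon)\leq\kappa(\varepsilon)$ (the lower bound follows from $\alpha = E_{0,1,0}(\varphi^*_{\alpha,\varepsilon})\geq P_{0,1,0}(\Phi_{C_Xee'C_X',c(\alpha,\varepsilon)})$), and there is no reason $c(\alpha,\varepsilon)$ should tend to $\|C_Xe\|^2$. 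A concrete counterexample is $\varphi_\alpha=\mathbf{1}_{\Phi_{C_Xee'C_X',\kappa(\alpha)}}$ (i.e.\ the initial test is itself based on $B=C_Xee'C_X'$): then a direct computation gives $c(\alpha,\varepsilon)=\kappa(\alpha)$ for all $\varepsilon\in(0,\alpha)$, so $P_\rho(\Phi_{C_Xee'C_X',c(\alpha,\varepsilon)})=P_\rho(\Phi_{C_Xee'C_X',\kappa(\alpha)})$ is constant in $\varepsilon$ and typically bounded away from zero for $\rho>0$. Your bound is too lossy precisely because you discard the factor $(1-\varphi_{\alpha-\varepsilon})$; the actual quantity $E_\rho[(1-\varphi_{\alpha-\varepsilon})\mathbf{1}_{\Phi_{C_Xee'C_X',c(\alpha,\varepsilon)}}]$ does go to zero, but only thanks to the near-complementarity between $\Phi_{C_Xee'C_X',c(\alpha,\varepsilon)}$ and the region where $\varphi_{\alpha-\varepsilon}$ is small.

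The paper circumvents this by never splitting via $E_\rho(\varphi_{\alpha-\varepsilon})$ and then bounding crudely. Instead it reduces, via $G_X$-invariance, to $\beta=0$, $\sigma=\|\Sigma(\rho)\|^{-1/2}$, passes along a subsequence to a total-variation limit $Q$ (Scheff\'e, using the compactness/positive-definiteness hypothesis on $A$), and then establishes $E_{0,1,0}(|\varphi^*_{\alpha,\varepsilon_m}-\varphi_\alpha|)\to 0$ \emph{under the null} by using the exact size identity from Property~A.2 together with the construction of $c(\alpha,\varepsilon)$: writing $\psi_m := (1-\varphi_{\alpha-\varepsilon_m})\mathbf{1}_{\Phi_{C_Xee'C_X',c(\alpha,\varepsilon_m)}}\geq 0$, one has $E_{0,1,0}(\psi_m) = -E_{0,1,0}[\varphi_{\alpha-\varepsilon_m}-\varphi_\alpha]\to 0$ by A.3 and dominated convergence, which controls the first piece without ever invoking a uniform-in-$\rho$ smallness of $P_\rho(\Phi_{C_Xee'C_X',c(\alpha,\varepsilon)})$. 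The transfer from $P_{0,1,0}$ to $Q$ then proceeds through an $L^1\Rightarrow$ a.e.\ (along a subsequence) argument and equivalence of the measures, not through dominated convergence against a single reference measure with uniformly bounded densities (the densities here are in general \emph{not} uniformly bounded, which is a second, more minor, issue with the sketch of your second piece). Your second piece could be repaired along those lines, but the first-piece bound would still be wrong and would need to be replaced by the exact-size argument.
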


\begin{remark}
In the leading case~$\Sigma(.)$ is a continuous function. In this case one can choose the set~$A$ in the second part of Theorem \ref{thm:enhthm} equal to~$[0, c]$ for any~$0 < c < a$ [recall that~$\Sigma(\rho)$ is positive definite for every~$\rho \in [0, a)$ by assumption]. Note further that since we are primarily interested in situations where the initial test~$\varphi_{\alpha}$ suffers from the zero-power trap, while the adjusted tests~$\varphi_{\alpha, \varepsilon}^*$ have limiting power~$1$, it is not restrictive to confine ourselves to intervals~$[0, c]$ as above, as we do \emph{not} want the power of the adjusted test to be close to the power of the initial test in a neighborhood of~$a$. Furthermore, the optimality properties of point-optimal invariant tests (against an alternative~$\bar{\rho} \in (0, a)$) or of locally best invariant tests (which are characterized by favorable power properties in the neighborhood of~$0$) concern only the power function over~$[0, c]$ for a suitably chosen~$c < a$.
\end{remark}

\begin{remark}
The tuning parameter~$\varepsilon$ needs to be chosen by the user in each particular application. In principle, the user can plot the power functions for various values of~$\varepsilon$, and can then decide upon inspection, which value of~$\varepsilon$ provides the best solution. For a specific example we refer to Section~\ref{sec:num} below.
\end{remark}

\begin{remark}
Finally, we point out that the construction of~$\varphi^*_{\alpha, \varepsilon}$ in Equation~\eqref{eqn:enhtest} and the conditions in Proposition~\ref{prop:excrit} and  Theorem~\ref{thm:enhthm} do not require the initial test~$\varphi_{\alpha}$ to suffer from the zero-power trap. While this is clearly our main focus, this observation shows that our method can also be applied in case the limiting-power of~$\varphi_{\alpha}$ is greater than~$0$ but smaller than one. In such a situation, using~$\varphi_{\alpha, \varepsilon}^*$ instead of~$\varphi_{\alpha}$ can be advantageous as well.
\end{remark}

\section{Numerical results}\label{sec:num}

In order to illustrate and compare the power properties of the tests introduced in Section~\ref{sec:avoid}, we now consider a simple example from spatial econometrics in which the zero-power trap occurs for a popular test. We focus on a situation where the correlation between the observations is a consequence of their proximity, which might be spatial, but could also be, e.g., social, and which is encoded in the adjacency (``weights'') matrix of a graph. 

One important model in this case is the spatial (autoregressive) error model, which leads to $$\Sigma(\rho) = [(I-\rho W')(I-\rho W)]^{-1},$$ for~$W$ a fixed weights matrix which is assumed to be (elementwise) nonnegative and irreducible with zero elements on the main diagonal. By the Perron-Frobenius theorem (e.g., \cite{HJ1985}, Theorem~8.4.4), the matrix~$W$ then has a positive (real) eigenvalue~$\lambda_{\max}(W)$, say, with algebraic multiplicity (and thus also geometric multiplicity) equal to~1, such that any other real or complex zero of the characteristic polynomial of~$W$ is in absolute value not larger than~$\lambda_{\max}(W)$. We assume that the parameter~$\rho \in [0, \lambda_{\max}(W)^{-1})$. For~$f_{\max}$ a normalized eigenvector of~$W$ w.r.t.~$\lambda_{\max}(W)$ it is not too difficult to see that Assumption~\ref{ASC} is satisfied (with~$e = f_{\max}$), and that Assumption~\ref{as:add} is satisfied. For details we refer to Section 4.1 in \cite{PP17}. 

The model depends, besides the design matrix~$X$, on the specific form of the weights matrix~$W$, which encodes the dependence relation of the observations. Subsequently we reconsider a simple example considered in Section 3 of \cite{kramer2005finite}, who has observed (cf.~his Figure~1) that for a weights matrix derived by the Queen criterion from a~$4 \times 4$ regular lattice, and for~$X = (1, \hdots, 1)' \in \R^{16}$ the Cliff-Ord test suffers from the zero-power trap for~$\alpha = 5\%$. We recall that the Cliff-Ord test is based on a test statistic as in Equation~\eqref{T_quadratic} and with~$B = C_X(W + W')C_X'$. 

The power function of the Cliff-Ord test and the power functions of the tests described in Section~\ref{sec:avoid} were obtained numerically (cf.~also Remark~\ref{rem:numericcv}), and are shown in Figure~\ref{fig:powqueen}. The figure also shows the power \emph{envelope} in the class of~$G_X$-invariant tests. That is, for each alternative~$\bar{\rho} \in (0, \lambda_{\max}(W)^{-1})$ Figure~\ref{fig:powqueen} shows the power of the point-optimal~$G_X$-invariant level~$\alpha = 5\%$ test against the alternative~$\bar{\rho}$. Recall from Remark~\ref{rem:opttest} that the point-optimal invariant test against alternative~$\bar{\rho}$ is based on a test statistic as in~\eqref{T_quadratic} and with~$B = -[C_X\Sigma(\bar{\rho})C_X']^{-1}$. In this example the power envelope is not attained by any~$G_X$-invariant test, but it serves the purpose of providing an upper bound for comparison. 

While Figure~\ref{fig:powqueen} illustrates that the approaches discussed in Sections~\ref{sec:ee} and~\ref{sec:artreg} avoid the zero-power trap, it reveals at the same time that the power functions of these tests are not completely satisfying. On the one hand, even though the test introduced in Section~\ref{sec:ee} does not suffer from the zero-power trap, it has low power in a large region of the alternative. On the other hand, the test from Section~\ref{sec:artreg} based on the Cliff-Ord test (i.e., as in Equation \eqref{eqn:rejadj} with~$\bar{B} = C_{(X, e)} (W + W') C_{(X, e)}'$) with artificial regressor~$e = f_{\max}$ avoids the zero-power trap as well and has a power function that practically coincides with the power envelope for small values of~$\rho$. But its limiting power is smaller than one (in fact is only~$0.619$).

\begin{figure}[!ht]
\centering
\includegraphics[width=0.8\linewidth]{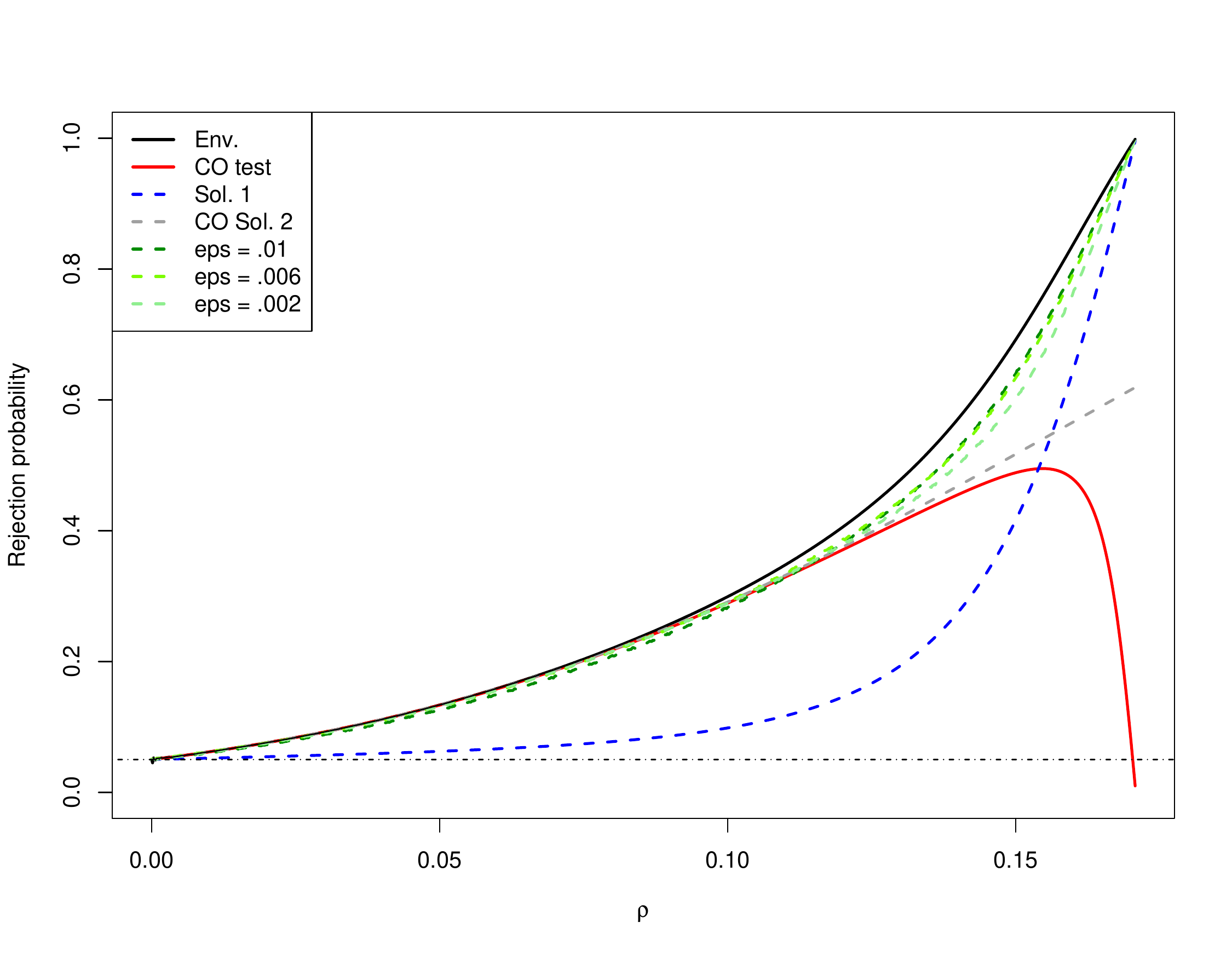}
\caption{Comparison of power functions. The horizontal line corresponds to~$\alpha = .05$. Env.~corresponds to the power envelope; CO test to the power function of the Cliff-Ord test; Sol.~1 to the power function of the test from Section~\ref{sec:ee}; CO Sol.~2 to the power function of the artificial regressor based Cliff-Ord test as discussed in Section~\ref{sec:artreg};~$\text{eps} = .01$ corresponds to the test in Section~\ref{subs:approx} with~$\varphi_{\alpha}$ the size~$\alpha$ Cliff-Ord test and~$\varepsilon = .01$; the remaining~$\text{eps} = .006$ and~$\text{eps} = .002$ correspond likewise to the tests in Section~\ref{subs:approx},  but for the corresponding values of~$\varepsilon$.}
\label{fig:powqueen}
\end{figure}

Figure~\ref{fig:powqueen} also contains the power function of some tests corresponding to the procedure outlined in Section~\ref{subs:approx} applied to the family~$\varphi_{\alpha}$ of level-$\alpha$ Cliff-Ord tests (cf.~Example~\ref{rem:exenh}). It shows the power functions corresponding to~$\varepsilon \in \{.002, .006, .01\}$. These tests have very good power properties. The power functions are practically identical to the one of the Cliff-Ord test (and hence to the power envelope) for small values of~$\rho$. But for larger values of~$\rho$ their power function is much closer to the power envelope than the power of the Cliff-Ord test. In particular, by construction, their power converges to~$1$ as~$\rho$ gets close to~$a$. One can also observe that smaller values of~$\varepsilon$ lead to power functions that are closer to the power function of the Cliff-Ord test for~$\rho$ close to~$0$, whereas larger values of~$\varepsilon$ lead to power functions that are closer to the power envelope for~$\rho$ close to~$a$.

\section{Conclusion}\label{sec:concl}

In the present article we have re-considered the zero-power trap phenomenon in testing for correlation in a general framework. Most importantly, we have suggested a way to construct ``approximately optimal tests'' that avoid the trap. For practical purposes, if an initial test, such as the Cliff-Ord test in the example discussed in Section~\ref{sec:num}, turns out to suffer from the zero-power trap, we suggest to use the method introduced in Section~\ref{subs:approx} to obtain a modified test with the following properties: (i) it has a similar power function as the initial test, (ii) it does not suffer from the zero-power trap, and (iii) its limiting power equals one. The tuning parameter~$\varepsilon$ involved in the construction of the modified test can be chosen by graphically comparing the power functions of modified tests corresponding to different values of the tuning parameter with the power envelope and the power function of the initial test. The heuristic underlying our construction can be interpreted as a finite sample variant of the power enhancement principle of \cite{fan2015power}. The approach, which is not restricted to the testing problem under consideration, might be of some interest in its own right.

\bibliographystyle{chicago}	
\bibliography{refs}		% expects file "refs.bib"

\begin{center}
\begin{LARGE}
\textbf{Appendices}
\end{LARGE}
\end{center}

\begin{appendix}
	
\section{Proofs for results in Section~\ref{sec:intro}} \label{app:intro}

\begin{proof}[Proof of Lemma~\ref{lem:sizecontrol}:]
Lemma~B.4 in \cite{PP17} shows that the cdf.~$F$, say, corresponding to~$P_{0, 1, 0} \circ T_{B}$ is continuous, that~$F(\lambda_1(B)) = 0$,~$F(\lambda_{n-k}(B)) = 1$, and that~$F$ is strictly increasing on~$[\lambda_1(B), \lambda_{n-k}(B)]$. Hence, the function~$f: [\lambda_1(B), \lambda_{n-k}(B)] \to [0, 1]$ defined via
\begin{equation}
c \mapsto P_{0, 1, 0}(\Phi_{B, c}) = 1-F(c)
\end{equation}
is continuous, strictly decreasing, and satisfies~$f(\lambda_1(B)) = 1$ and~$f(\lambda_{n-k}(B)) = 0$. Set~$\kappa = f^{-1}$, i.e., the inverse of~$f$, which is continuous, strictly decreasing, and obviously satisfies~$\kappa(0) = \lambda_{n-k}(B)$ and~$\kappa(1) = \lambda_{1}(B)$. Then,~$P_{0, 1, 0}\left(\Phi_{B, \kappa(\alpha)}\right) = \alpha$ for every~$\alpha \in [0, 1]$. Finally, recall that~$T_{B}$ is~$G_X$-invariant, from which it follows (cf.~Remark~2.3 in \cite{PP17}) that for every~$c\in \R$ every~$\beta \in \R^k$ and every~$\sigma \in (0, \infty)$ we have~$P_{\beta, \sigma, 0}(\Phi_{B, c}) = P_{0, 1, 0}(\Phi_{B,  c}).$ Hence,~$P_{\beta, \sigma, 0}\left(\Phi_{B, \kappa(\alpha)}\right) = \alpha$ holds for every~$\beta \in \R^k$, every~$\sigma \in (0, \infty)$, and every~$\alpha \in [0, 1]$. The uniqueness part is obvious.
\end{proof}
	
\section{Proofs for results in Section~\ref{sec:ZPT}}\label{ZPTAPP}

\begin{proof}[Proof of Theorem~\ref{thm:ZPThl}:]
We apply Theorem~2.7 in \cite{PP17}. Their Assumption 1 coincides with ours and is thus satisfied. Furthermore, by our Gaussianity assumption, their Assumption~3 is satisfied in our framework (with~$\mathbf{z}$ a normally distributed random vector with mean~$0$ and covariance matrix~$I_n$), and we can use Part~1 of their Proposition~2.6 to conclude that their Assumption~2 is satisfied. The statement now follows from Theorem~2.7 in \cite{PP17} for the special case~$\varphi(e) = 0$. The last statement follows from Remark~2.8(i) in the same reference.
\end{proof}

\begin{proof}[Proof of Theorem~\ref{lem:zptphiPP17}:]
%The first statement of Part~2 follows from Part 1 of Proposition~2.26 in \cite{PP17}, cf.~also their Equation~(23). Note that this Proposition is applicable as Assumptions~1 and~2 in \cite{PP17} are satisfied (cf.~the proof of Theorem~\ref{thm:ZPThl} above and recall that in our framework~$P_{0, 1, 0}$ is the~$n$-variate Gaussian measure with mean~$0$ and covariance matrix~$I_n$). To verify the claimed form of~$\alpha^*$ in case~$C_X e \notin \limfunc{Eig}(B, \lambda_{n-k}(B))$ we consider two cases: let first~$C_X e \notin \limfunc{Eig}(B, \lambda_1(B))$; then the statement is contained in Part~3 of Proposition~2.26 in \cite{PP17}. In case~$C_X e \notin \limfunc{Eig}(B, \lambda_1(B))$ Part~2 of Proposition~2.26 in \cite{PP17} shows that the statement in Part~2 of the present Theorem actually holds with~$\alpha^* = 1$. It remains to verify that~$1 = P_{0, 1, 0}(\Phi_{B, T_{B}(e)})$. But~$C_X e \notin \limfunc{Eig}(B, \lambda_1(B))$ implies~$T_{B}(e) = \lambda_1(B)$, and it thus follows from Lemma~\ref{lem:sizecontrol} that~$1 = P_{0, 1, 0}(\Phi_{B, T_{B}(e)})$. 
%For the first statement, we
We use Corollary~2.21 in \cite{PP17}. That their Assumptions~1 and~2 are satisfied follows as in the proof of Theorem~\ref{thm:ZPThl} above. Recall from Lemma~\ref{lem:sizecontrol} that~$\kappa$ is a strictly decreasing and continuous bijection from~$[0, 1]$ to~$[\lambda_1(B), \lambda_{n-k}(B)]$, implying that for~$\alpha \in (0, 1)$ we have~$\kappa(\alpha) \in (\lambda_1(B), \lambda_{n-k}(B))$. We can hence apply Corollary~2.21 in \cite{PP17} to conclude that (under our assumptions) for~$\alpha \in (0, 1)$ such that~$T_{B}(e) < \kappa(\alpha)$ we have~\eqref{eqn:limp}.
\end{proof}

\begin{proof}[Proof of Lemma~\ref{thm:conlcsuff}:]
Noting that both~$e \notin \limfunc{span}(X)$ and~$\lambda_{1}(B(X)) < \lambda_{n-k}(B(X))$ follow from~$C_Xe \notin \limfunc{Eig}(B(X), \lambda_{n-k}(B(X)))$, Condition~\eqref{eqn:concl} together with the definition of~$T_{B(X)}$ in Equation~\eqref{T_quadratic} can be used to verify~$\lambda_1(B(X)) \leq T_{B(X)}(e) < \lambda_{n-k}(B(X))$.  Thus, Lemma~\ref{lem:sizecontrol} gives~$P_{0, 1, 0}^X(\Phi_{B(X), T_{B(X)}(e)}) \in (0, 1]$ and~$T_{B(X)}(e) < \kappa(\alpha)$ for every~$\alpha \in (0, P_{0, 1, 0}^X(\Phi_{B(X), T_{B(X)}(e)}))$. We can now apply Theorem~\ref{lem:zptphiPP17} to conclude.
%The first statement follows from Part 1 of Proposition~2.26 in \cite{PP17}, cf.~also their Equation~(23). Note that this Proposition is applicable as Assumptions~1 and~2 in \cite{PP17} are satisfied (cf.~the proof of Theorem~\ref{thm:ZPThl} above and recall that in our framework~$P^X_{0, 1, 0}$ is the~$n$-variate Gaussian measure with mean~$0$ and covariance matrix~$I_n$). To verify the remaining claim concerning~$\alpha^*(X)$, we consider two cases: let first~$C_X e \notin \limfunc{Eig}(B(X), \lambda_1(B(X)))$; then the statement is contained in Part~3 of Proposition~2.26 in \cite{PP17}. In case~$C_X e \notin \limfunc{Eig}(B(X), \lambda_1(B(X)))$ Part~2 of Proposition~2.26 in \cite{PP17} shows that the statement in the present lemma actually holds with~$\alpha^*(X) = 1$. It remains to verify that~$1 = P^X_{0, 1, 0}(\Phi_{B(X), T_{B(X)}(e)})$. But~$C_X e \notin \limfunc{Eig}(B(X), \lambda_1(B(X)))$ implies~$T_{B(X)}(e) = \lambda_1(B(X))$, and it thus follows from Lemma~\ref{lem:sizecontrol} that~$1 = P^X_{0, 1, 0}(\Phi_{B(X), T_{B(X)}(e)})$. 
\end{proof}

\begin{lemma}\label{lem:algebraic}
Let~$M \in \R^{n \times n}$ be symmetric, let~$v \in \R^n$ be such that~$\|v\| = 1$, and suppose that~$1 \leq d < n-1$. Then, 
\begin{equation*}
\mathscr{D}(n,d) := \{L \in \R^{n \times d}: \limfunc{rank}(L) = d, ~ \Pi_{\limfunc{span}(L)^{\bot}}v \text{ is an eigenvector of } \Pi_{\limfunc{span}(L)^{\bot}}M\Pi_{\limfunc{span}(L)^{\bot}}\}
\end{equation*}
can be written as
\begin{equation}
\{L \in \R^{n \times d}: \det(L'L) \neq 0,~\|\Pi_{\limfunc{span}(L)^{\bot}}v\| \neq 0 \} \cap \{L \in \R^{n \times d}: p_M(L) = 0 \}.
\end{equation}
for~$p_M: \R^{n \times d} \to \R$ a multivariate polynomial, which is given in the proof. Furthermore,~$p_M \equiv 0$ if and only if~$M = c_1 I_n + c_2 vv'$ holds for real numbers~$c_1$ and~$c_2$.
\end{lemma}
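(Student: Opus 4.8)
The plan is to turn the eigenvector condition defining $\mathscr{D}(n,d)$ into the vanishing of an explicit polynomial in the entries of $L$ by clearing the denominators coming from the projection onto $\limfunc{span}(L)^{\bot}$, and then to decide when this polynomial is identically zero. On the set $\{L\in\R^{n\times d}:\det(L'L)\neq 0\}$ — which is precisely the set of $L$ of rank $d$, since $d\le n$ — the projection $P(L):=\Pi_{\limfunc{span}(L)^{\bot}}=I_n-L(L'L)^{-1}L'$ equals $Q(L)/\det(L'L)$, where $Q(L):=\det(L'L)\,I_n-L\,\mathrm{adj}(L'L)\,L'$ has polynomial entries. I will use two elementary facts: first, for fixed $w\neq 0$ a vector $u$ is a scalar multiple of $w$ if and only if $uw'-wu'=0$; second, since $P(L)^2=P(L)$, the vector $\Pi_{\limfunc{span}(L)^{\bot}}v$ is an eigenvector of $\Pi_{\limfunc{span}(L)^{\bot}}M\Pi_{\limfunc{span}(L)^{\bot}}$ exactly when $\Pi_{\limfunc{span}(L)^{\bot}}v\neq 0$ and $P(L)MP(L)v$ is a scalar multiple of $P(L)v$. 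Writing $R(L):=(QMQv)(Qv)'-(Qv)(QMQv)'$, a matrix with polynomial entries, one checks that $R(L)=\det(L'L)^{3}\big[(PMPv)(Pv)'-(Pv)(PMPv)'\big]$ on $\{\det(L'L)\neq 0\}$; hence, for $L$ of rank $d$, membership in $\mathscr{D}(n,d)$ is equivalent to $\|\Pi_{\limfunc{span}(L)^{\bot}}v\|\neq 0$ together with $R(L)=0$. Taking $p_M(L):=\sum_{i,j}R(L)_{ij}^{2}$ — a polynomial, with $\{p_M=0\}=\{R=0\}$ — yields the asserted description
\[
\mathscr{D}(n,d)=\{L:\det(L'L)\neq 0,\ \|\Pi_{\limfunc{span}(L)^{\bot}}v\|\neq 0\}\cap\{L:p_M(L)=0\}.
\]

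\textbf{The equivalence.} Since a finite sum of squares of real polynomials vanishes identically iff each summand does, $p_M\equiv 0$ iff $R\equiv 0$. If $M=c_1I_n+c_2vv'$, then on $\{\det(L'L)\neq 0\}$ we get $PMP=c_1P+c_2(Pv)(Pv)'$, so $PMPv=(c_1+c_2\|Pv\|^2)Pv$ is parallel to $Pv$ and $R(L)=0$ there; as $\{\det(L'L)\neq 0\}$ is dense, $R\equiv 0$. For the converse, assume $R\equiv 0$; equivalently, for every rank-$d$ matrix $L$ with $\Pi_Tv\neq 0$, where $T:=\limfunc{span}(L)^{\bot}$ has dimension $m:=n-d$ and $2\le m\le n-1$ by hypothesis, the vector $\Pi_Tv$ is an eigenvector of $\Pi_TM\Pi_T$. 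Running $L$ through all matrices with $v\in T$ gives $\Pi_Tv=v$, hence $Mv-\mu_Tv\perp T$; pairing with $v\in T$ forces $\mu_T=v'Mv=:\alpha$ independently of $T$, so $Mv-\alpha v\perp T$ for every $m$-dimensional $T$ containing $v$. As $m\ge 2$, these subspaces cover $\R^n$, so $Mv=\alpha v$; in particular $v^{\bot}$ is $M$-invariant and $M_0:=M|_{v^{\bot}}$ is symmetric. Now fix a unit $s\in v^{\bot}$ and a unit $g\in v^{\bot}\cap s^{\bot}$; since $\dim(v^{\bot}\cap s^{\bot})=n-2\ge m-1$, complete $\{g\}$ to an orthonormal family $g,f_3,\dots,f_m$ in $v^{\bot}\cap s^{\bot}$ and put $T:=\limfunc{span}(v+s)\oplus\limfunc{span}(g,f_3,\dots,f_m)$, which has dimension $m$ and satisfies $\Pi_Tv=\tfrac12(v+s)\neq 0$. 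The eigenvector condition for this $T$ reads $M(v+s)-\mu(v+s)\perp T$; substituting $Mv=\alpha v$ (so $M(v+s)=\alpha v+M_0s$) and pairing with $g\in T$ gives $g\perp M_0s$. Letting $g$ range over all unit vectors of $v^{\bot}\cap s^{\bot}$ shows $M_0s\in\limfunc{span}(s)$ (recall $M_0s\in v^{\bot}=\limfunc{span}(s)\oplus(v^{\bot}\cap s^{\bot})$), so every unit vector of $v^{\bot}$ is an eigenvector of $M_0$; since $\dim v^{\bot}=n-1\ge 2$, this forces $M_0=cI$ on $v^{\bot}$. Therefore $M=\alpha vv'+c(I_n-vv')=cI_n+(\alpha-c)vv'$, of the claimed form.

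\textbf{Main obstacle.} The only delicate point is the second family of subspaces: one must choose $T$ with $\dim T=m$ and $\Pi_Tv\neq 0$ in such a way that, after inserting $Mv=\alpha v$, the eigenvector equation isolates exactly $M_0s$ modulo $\limfunc{span}(s)$ when tested against a freely varying $g\in v^{\bot}\cap s^{\bot}$. This is where the hypothesis $1\le d<n-1$ enters: $d\ge 1$ gives $m-1\le n-2$, so $g$ and the completion $f_3,\dots,f_m$ fit inside $v^{\bot}\cap s^{\bot}$; and $d<n-1$ gives $m\ge 2$, which is needed both for the $m$-dimensional subspaces through $v$ to span $\R^n$ (in deriving $Mv=\alpha v$) and to have $\dim T\ge 2$. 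The remaining ingredients — computing $Q(L)$, the rank-one parallelism criterion, density of $\{\det(L'L)\neq 0\}$, and the fact that a symmetric operator all of whose vectors are eigenvectors is a scalar — are routine.
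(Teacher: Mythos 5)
Your proof is correct, and the converse direction (``$p_M\equiv 0$ implies $M=c_1I_n+c_2vv'$'') is argued by a genuinely different route than the paper's. You both begin by clearing denominators through $\Pi_{\limfunc{span}(L)^{\bot}}=Q(L)/\det(L'L)$ and encoding the parallelism of $Pv$ and $PMPv$ as the vanishing of a polynomial, but the polynomial itself differs: the paper takes the Gram determinant of the pair $(\det(L'L)Q(L)v,\ Q(L)MQ(L)v)$, whereas you take $p_M(L)=\sum_{i,j}R(L)_{ij}^2$ with $R(L)=(QMQv)(Qv)'-(Qv)(QMQv)'$. Both are valid reformulations of the rank-one condition. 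The substantive difference is in the ``only if'' direction. The paper argues contrapositively: assuming $M$ is not of the form $c_1I_n+c_2vv'$, it exhibits a single explicit $L$ with $p_M(L)\neq 0$, splitting into two cases according to whether $v$ is an eigenvector of $M$ and constructing the witness $L$ from a suitable eigenbasis of $M$. You instead argue directly: from $p_M\equiv 0$ you run over subspaces $T=\limfunc{span}(L)^{\bot}$ containing $v$ to extract $Mv=\alpha v$, then over the family $T=\limfunc{span}(v+s)\oplus\limfunc{span}(g,f_3,\dots,f_m)$ to show that every $s\in v^{\bot}$ is an eigenvector of $M|_{v^{\bot}}$, forcing $M|_{v^{\bot}}=cI$. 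Your argument is conceptually cleaner and avoids the case split; it also makes transparent exactly where the two-sided hypothesis $1\le d<n-1$ enters (needing $m\ge 2$ so that subspaces through $v$ are ample, and $m-1\le n-2$ so that the completion inside $v^{\bot}\cap s^{\bot}$ exists). The paper's construction has the mild advantage of producing a concrete witness matrix, which could be useful elsewhere. I checked the numerics of your $R(L)=\det(L'L)^{3}[(PMPv)(Pv)'-(Pv)(PMPv)']$ identity, the disjointness $(v+s)\notin v^{\bot}\cap s^{\bot}$, the projection $\Pi_Tv=\tfrac12(v+s)$, and the density-of-$\{\det(L'L)\neq 0\}$ argument in the $\Leftarrow$ direction; all hold.
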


\begin{proof}
%[Proof of Lemma~\ref{lem:algebraic}]
Let~$L \in \R^{n \times d}$ satisfy~$\limfunc{rank}(L) = d$, or equivalently~$\limfunc{det}(L'L) \neq 0$. If~$\Pi_{\limfunc{span}(L)^{\bot}}v = 0$, the vector~$\Pi_{\limfunc{span}(L)^{\bot}}v$ can not be an eigenvector of~$\Pi_{\limfunc{span}(L)^{\bot}} M \Pi_{\limfunc{span}(L)^{\bot}}$. If~$\Pi_{\limfunc{span}(L)^{\bot}}v \neq 0$,~$\Pi_{\limfunc{span}(L)^{\bot}}v$ is an eigenvector of the symmetric matrix~$\Pi_{\limfunc{span}(L)^{\bot}} M \Pi_{\limfunc{span}(L)^{\bot}}$ if and only if~
\begin{equation}\label{eqn:rankcondnew}
\limfunc{rank}\left((\Pi_{\limfunc{span}(L)^{\bot}} v, \Pi_{\limfunc{span}(L)^{\bot}} M \Pi_{\limfunc{span}(L)^{\bot}}v) \right) < 2.
\end{equation}
We can write this rank condition equivalently as
\begin{equation}\label{eqn:proofcond}
0 = \limfunc{det}\left[
(\Pi_{\limfunc{span}(L)^{\bot}} v, \Pi_{\limfunc{span}(L)^{\bot}} M \Pi_{\limfunc{span}(L)^{\bot}}v )'
(\Pi_{\limfunc{span}(L)^{\bot}} v, \Pi_{\limfunc{span}(L)^{\bot}} M \Pi_{\limfunc{span}(L)^{\bot}}v )   \right].
\end{equation}
Writing~$\Pi_{\limfunc{span}(L)^{\bot}} = I_n - \det(L'L)^{-1} L\limfunc{adj}(L'L)L'$ (throughout we use the convention that the adjoint of a~$1\times 1$ matrix equals~$1$), and premultiplying~\eqref{eqn:proofcond} by~$\det(L'L)^{16} \neq 0$, one sees that~\eqref{eqn:proofcond} is equivalent to
$$0 = \limfunc{det}\left[
(\det(L'L)Q(L) v, Q(L) M Q(L)v )'
(\det(L'L)Q(L) v, Q(L) M Q(L)v )   \right]=:p_M(L),$$
where~$Q(L) := \det(L'L) I_n - L\limfunc{adj}(L'L)L'$. Note that~$L \mapsto p(L)$ defines a multivariate polynomial on~$\R^{n \times d}$. It follows that~$\mathscr{D}(n,d)$ has the claimed form. 

To prove the second statement, note that if~$M$ is of the specific form~$c_1 I_n + c_2 vv'$ for real numbers~$c_1$ and~$c_2$, one has for every~$L \in \R^{n \times d}$ that
\begin{equation}
\Pi_{\limfunc{span}(L)^{\bot}} M \Pi_{\limfunc{span}(L)^{\bot}} v = (c_1 + c_2 v'\Pi_{\limfunc{span}(L)^{\bot}}v) \Pi_{\limfunc{span}(L)^{\bot}} v.
\end{equation}
For~$L$ such that~$\det(L'L) \neq 0$ the statement~$p_M(L) = 0$ is equivalent to~\eqref{eqn:rankcondnew}. But~\eqref{eqn:rankcondnew} holds because of the previous display. If~$L$ satisfies~$\det(L'L) = 0$ we obviously have~$p_M(L) = 0$. Thus,~$p_M \equiv 0$ for all~$M$ of this specific form.

Now assume that~$M$ can not be written as~$c_1 I_n + c_2 vv'$ for real numbers~$c_1$ and~$c_2$. It suffices to construct a single~$L$ such that~$p_M(L) \neq 0$ holds. We consider two cases:

\smallskip

(a) We first show that one can find an~$L$ as required in the special case where~$v$ is not an eigenvector of~$M$. Let~$u_1, \hdots, u_n$ be an orthonormal basis of eigenvectors of~$M$ with corresponding eigenvalues~$\lambda_{1}(M), \hdots, \lambda_n(M)$. Note that there then exist two indices~$j \neq l$, say, such that~$\lambda_{j}(M) \neq \lambda_{l}(M)$ and such that~$v'u_j \neq 0$ and~$v'u_l \neq 0$ (otherwise~$v$ would be an eigenvector of~$M$; recall that~$v \neq 0$). Now, define the matrix~$L_{\bot} = (u_j, u_l, z_1, \hdots, z_{n-d-2})$ for~$z_1, \hdots, z_{n-d-2}$ linearly independent elements of~$\limfunc{span}(u_j, u_l, v)^{\bot}$ (with the convention that~$L_{\bot} = (u_j, u_l)$ if~$n-d = 2$; note that~$n-d \geq 2$ holds by assumption). Such a choice of~$z_1, \hdots, z_{n-d-2}$ is possible as~$d \geq 1$ by assumption. Note that~$\limfunc{rank}(L_{\bot}) = n-d$. Next, let~$L$ be an~$n \times d$ matrix with~$\limfunc{span}(L)=\limfunc{span}(L_{\bot})^{\bot}$. Then,~$L$ is of full column rank, and~$\Pi_{\limfunc{span}(L)^{\bot}}v \neq 0$. From the discussion preceding the definition of~$p_M$ we see that it thus remains to verify that~$\Pi_{\limfunc{span}(L)^{\bot}} v$ is not an eigenvector of~$\Pi_{\limfunc{span}(L)^{\bot}} M \Pi_{\limfunc{span}(L)^{\bot}}$. But~$\Pi_{\limfunc{span}(L)^{\bot}} v = \Pi_{\limfunc{span}((u_j, u_l))} v = u_j'v u_j + u_l'v u_l$, implying~$\Pi_{\limfunc{span}(L)^{\bot}} M \Pi_{\limfunc{span}(L)^{\bot}}v = \lambda_j(M) u_j'v u_j + \lambda_l(M) u_l'v u_l$. Hence, if~$\Pi_{\limfunc{span}(L)^{\bot}} v$ was an eigenvector of~$\Pi_{\limfunc{span}(L)^{\bot}} M \Pi_{\limfunc{span}(L)^{\bot}}$, we would have
\begin{equation}
\lambda_j(M) u_j'v u_j + \lambda_l(M) u_l'v u_l = c(u_j'v u_j + u_l'v u_l)
\end{equation}
for some~$c \in \R$, which gives the contradiction~$\lambda_j(M) = \lambda_l(M) = c$. 

\smallskip

(b) Next we consider the case where~$v$ is an eigenvector of~$M$ to the eigenvalue~$\lambda_{i}(M)$, say. Let~$u_1, \hdots, u_n$ be an orthonormal basis of eigenvectors of~$M$ corresponding to its eigenvalues~$\lambda_1(M), \hdots, \lambda_n(M)$, and where~$u_i = v$ holds. By assumption,~$M$ is not of the form~$c_1 I_n + c_2 vv'$. Together with~$v$ being an eigenvector of~$M$ this implies (via a diagonalization argument) existence of two indices~$j$ and~$l$, say, such that~$i,j,l$ are pairwise distinct and such that~$\lambda_j(M) \neq \lambda_l(M)$. Now, define~$L_{\bot} = (x, y, z_1, \hdots, z_{n-d-2})$ where~$x = v + u_j$,~$y = v + u_l$ and where~$z_1, \hdots, z_{n-d-2}$ are linearly independent elements of~$\limfunc{span}(u_j, u_l, v)^{\bot}$ (with the convention that~$L_{\bot} = (x, y)$ if~$n-d = 2$; recall that~$n-d \geq 2$ holds by assumption). Such a construction is possible as~$d\geq 1$ by assumption. Note that~$\limfunc{rank}(L_{\bot}) = n-d$. Define~$L$ as an~$n \times d$ matrix with~$\limfunc{span}(L) = \limfunc{span}(L_{\bot})^{\bot}$. Then,~$L$ is of full column rank, and~$\Pi_{\limfunc{span}(L)^{\bot}} v \neq 0$. Arguing as in (a) it now remains to verify that~$\Pi_{\limfunc{span}(L)^{\bot}} v$ is not an eigenvector of~$\Pi_{\limfunc{span}(L)^{\bot}} M \Pi_{\limfunc{span}(L)^{\bot}}$: It is easy to see that~$$\Pi_{\limfunc{span}(L)^{\bot}}v = \Pi_{\limfunc{span}((x,y))}v = 3^{-1}(x+y),$$ and that, using the expression in the previous display and a simple computation,~$$\Pi_{\limfunc{span}(L)^{\bot}} M \Pi_{\limfunc{span}(L)^{\bot}}v = 9^{-1} \left[ (2\lambda_i(M) + 2\lambda_j(M) - \lambda_l(M))x + (2\lambda_i(M) -\lambda_j(M) + 2\lambda_l(M))y \right].$$ Hence, for this choice of~$L$ the vector~$\Pi_{\limfunc{span}(L)^{\bot}}v$ is an eigenvector of~$\Pi_{\limfunc{span}(L)^{\bot}} M \Pi_{\limfunc{span}(L)^{\bot}}$ if and only if
\begin{equation}\label{eqn:newlasteqconclu}
3^{-1}(x+y) = c~9^{-1} \left[ (2\lambda_i(M) + 2\lambda_j(M) - \lambda_l(M))x + (2\lambda_i(M) + 2\lambda_l(M) -\lambda_j(M))y \right]
\end{equation}
for some~$c \in \R$. The number~$c$ must then necessarily be nonzero. But this implies (premultiply both sides of~\eqref{eqn:newlasteqconclu} by~$u_j'$, then by~$u_l'$, and compare the two equations obtained) that~$\lambda_j(M) = \lambda_l(M)$, a contradiction.
\end{proof}

\begin{proof}[Proof of Proposition~\ref{prop:smallalpha}:]
We start with the claim that up to a~$\mu_{\R^{n \times k}}$-null set of exceptional matrices, every~$X \in \R^{n \times k}$ satisfies~\eqref{eqn:concl}. From~$k<n$ it follows that~$\mu_{\R^{n \times k}}(\{X \in \R^{n \times k}:\mathrm{rank}(X) < k\}) = 0$. Hence, it suffices to show that
\begin{equation}\label{eqn:setX}
\{X \in \R^{n \times k}: \limfunc{rank}(X) = k \text{ and } C_Xe \in \mathrm{Eig}(B(X), \lambda_{n-k}(B(X)))\}
\end{equation}
is a~$\mu_{\R^{n \times k}}$-null set. We consider two cases:

\smallskip

(a) Suppose first that~$M = c_1 I_n + c_2 ee'$ for real numbers~$c_1, c_2$ where~$c_2 < 0$. Then, the set in Equation~\eqref{eqn:setX} simplifies to 
\begin{equation}\label{eqn:firstattempt}
\{X \in \R^{n \times k}: \limfunc{rank}(X) = k \text{ and } e \in \limfunc{span}(X)\}.
\end{equation}
To see this note that in this case and for~$X \in \R^{n \times k}$ so that~$\limfunc{rank}(X) = k$ we have
\begin{equation}\label{eqn:botstruc}
\limfunc{Eig}\left(B(X), ~\lambda_{n-k}(B(X))\right) = \limfunc{Eig}\left(C_{X}MC_{X}^{\prime }, ~\lambda_{n-k}(C_{X}MC_{X}^{\prime })\right) = \limfunc{span}(C_x e)^{\bot},
\end{equation}
where we used the assumption in~\eqref{eqn:asEig} to obtain the first equality, and the specific structure of~$M$ and~$c_2 < 0$ to obtain the second equality. Thus,~$C_Xe \in \limfunc{Eig}\left(B(X), ~\lambda_{n-k}(B(X))\right)$ is possible only if~$C_X e = 0$, which is equivalent to~$e \in \limfunc{span}(X)$. Therefore,~\eqref{eqn:setX} simplifies to~\eqref{eqn:firstattempt}. But, by assumption~$1 \leq k < n$ holds, from which it is easy to see, noting that~$\|e\| = 1$, that~$\mu_{\R^{n \times k}}(\{X \in \R^{n \times k}:  e \in \limfunc{span}(X)\}) = 0$. Therefore, the set in~\eqref{eqn:firstattempt}, and equivalently the set in~\eqref{eqn:setX}, is a~$\mu_{\R^{n \times k}}$-null set in this case.

\smallskip

(b) Consider now the case where~$M$ is not a linear combination of~$I_n$ and~$ee'$. Using Equation~\eqref{eqn:asEig} we can write the set defined in~\eqref{eqn:setX} equivalently as
\begin{equation}\label{eqn:set2}
\{X \in \R^{n \times k}: \limfunc{rank}(X) = k  \text{ and } C_Xe \in \limfunc{Eig}\left(C_{X}MC_{X}^{\prime }, ~\lambda_{n-k}(C_{X}MC_{X}^{\prime })\right)\}.
\end{equation}
For~$X \in \R^{n \times k}$ of full column rank the property~$C_X'C_X = \Pi_{\limfunc{span}(X)^{\bot}} = \Pi_{\limfunc{span}(X)^{\bot}}^2$ can be used to verify that~$$C_Xe \in \mathrm{Eig}(C_XMC_X', \lambda_{n-k}(C_XMC_X'))$$ implies~$$\Pi_{\limfunc{span}(X)^{\bot}}e \in \mathrm{Eig}(\Pi_{\limfunc{span}(X)^{\bot}}M\Pi_{\limfunc{span}(X)^{\bot}}, \lambda_{n-k}(C_XMC_X')).$$ Thus, if~$e \notin \limfunc{span}(X)$ then ~$\Pi_{\limfunc{span}(X)^{\bot}}e\neq 0$, and~$C_Xe \in \mathrm{Eig}(C_XMC_X', \lambda_{n-k}(C_XMC_X'))$ implies that~$\Pi_{\limfunc{span}(X)^{\bot}}e$ is an eigenvector of~$\Pi_{\limfunc{span}(X)^{\bot}}M\Pi_{\limfunc{span}(X)^{\bot}}$. Thus, the set in Equation~\eqref{eqn:set2} is contained in the union of the~$\mu_{\R^{n \times k}}$-null set~$\{X \in \R^{n \times k}: e \in \limfunc{span}(X) \}$ and the set
\begin{equation}\label{eqn:2ndset}
\{X \in \R^{n \times k}: \limfunc{rank}(X) = k, ~ \Pi_{\limfunc{span}(X)^{\bot}}e \text{ is an eigenvector of } \Pi_{\limfunc{span}(X)^{\bot}}M\Pi_{\limfunc{span}(X)^{\bot}}\}.
\end{equation}
It thus remains to verify that the set in~\eqref{eqn:2ndset} is a~$\mu_{\R^{n \times k}}$-null set. Lemma~\ref{lem:algebraic} (applied with~$k = d$ and~$v = e$) shows that~\eqref{eqn:2ndset} is the subset of an algebraic set. Note that the assumptions in Lemma~\ref{lem:algebraic} are satisfied as~$1 \leq k < n-1$ is assumed. The lemma also provides the information that a multivariate polynomial defining this algebraic set does not vanish everywhere. Hence, it follows that the set in the previous display is contained in a~$\mu_{\R^{n \times k}}$-null set. Since the set is Borel measurable (cf., e.g., the representation obtained via Lemma~\ref{lem:algebraic}), it follows that it is itself a~$\mu_{\R^{n \times k}}$-null set.

We now prove the two remaining claims concerning~$\mathscr{X}(\alpha; B)$. For the monotonicity claim: If~$\mathscr{X}(\alpha_2; B)$ is empty, there is nothing to prove. Consider the case where~$\mathscr{X}(\alpha_2; B) \neq \emptyset$. Let~$X \in \mathscr{X}(\alpha_2; B)$. By definition of~$\mathscr{X}(\alpha_2; B)$ the matrix~$X$ has full column rank and~$\lambda_1(B(X)) < \lambda_{n-k}(B(X))$. From~$0 < \alpha_1 \leq \alpha_2 < 1$ it thus follows from Lemma~\ref{lem:sizecontrol} that~$\kappa(\alpha_2) \leq \kappa(\alpha_1)$. Hence,~$\Phi_{B(X), C_X, \kappa(\alpha_1)} \subseteq \Phi_{B(X),C_X, \kappa(\alpha_2)}$ and one obtains~$X \in \mathscr{X}(\alpha_1; B)$. Finally, note that Lemma~\ref{thm:conlcsuff} shows that~if~$X$ satisfies \eqref{eqn:concl}, then~$X \in \bigcup_{m \in \N} \mathscr{X}(\alpha_m; B)$. The first (already established) part of the current proposition hence proves the last claim.
\end{proof}

\begin{lemma}\label{lem:exseq}
Let~$M \in \R^{n \times n}$ be symmetric, let~$v \in \R^n$ such that~$\|v\|=1$, and suppose that~$M$ can \emph{not} be written as~$c_1I_n + c_2vv'$ for real numbers~$c_1, c_2$ where~$c_2 \geq 0$. Let~$d \in \N$ such that~$d < n-1$. Then:
\begin{enumerate}
\item There exists a sequence~$L_m \in \R^{n \times d}$ such that~$L_m'L_m = I_d$ and~$L_m \to L^*$ as~$m \to \infty$, a vector~$u \in \R^n$ with~$\|u\| = 1$ and a real number~$c > \lambda_{\min}(M)$, such that:~$\Pi_{\limfunc{span}(L_m)^{\bot}}v \neq 0$ and~$\Pi_{\limfunc{span}(L_m)^{\bot}}u \neq 0$ holds for every~$m \in \N$, such that
\begin{equation}
\quad \label{eqn:eigconv}
\lim_{m \to \infty} v'\Pi_{\limfunc{span}(L_m)^{\bot}}M\Pi_{\limfunc{span}(L_m)^{\bot}}v/v'\Pi_{\limfunc{span}(L_m)^{\bot}}v = \lambda_{\min}(M), 
\end{equation}
and such that for every~$m \in \N$ we have
\begin{equation}
\label{eqn:maxconv}
\quad  u'\Pi_{\limfunc{span}(L_m)^{\bot}}M\Pi_{\limfunc{span}(L_m)^{\bot}}u/u'\Pi_{\limfunc{span}(L_m)^{\bot}}u = c.
\end{equation}
\item Let~$B$ be a function from the set of full column rank~$n \times d$ matrices to the set of symmetric~$(n-d) \times (n-d)$-dimensional matrices. Suppose there exists a function~$F$ from the set of~$(n-d) \times (n-d)$ matrices to itself, such that for every~$L \in \R^{n \times d}$ of full column rank~$B(L) = F(C_LM C_L')$ holds for  a suitable choice of~$C_L \in \R^{(n-d) \times n}$ satisfying~$C_LC_L' = I_{n-d}$ and~$C_L'C_L = \Pi_{\limfunc{span}(L)^{\bot}}$. Suppose further that~$F$ is continuous at every element~$A$, say, of the closure of~$\{C_L MC_L': L \in \R^{n \times d},~\limfunc{rank}(L) = d\} \subseteq \R^{(n-d) \times (n-d)}$, and that for every such~$A$ we have
\begin{equation}\label{eqn:eigenspaceeq}
\limfunc{Eig}\left(F(A), ~\lambda_{1}(F(A))\right) = \limfunc{Eig}\left(A, ~\lambda_{1}(A)\right).
\end{equation}
Then, the sequence~$L_m$ obtained in Part 1 satisfies~$C_{L_m}v \neq 0$ for every~$m \in \N$, 
\begin{equation}
\label{eqn:eigconv2}
\lim_{m \to \infty} \left[v'C'_{L_m} B(L_m) C_{L_m}v/\|C_{L_m}v\|^2 - \lambda_{1}(B(L_m)) \right] = 0, 
\end{equation}
and 
\begin{equation}
\label{eqn:maxconv2}
\liminf_{m \to \infty}  \left[ \lambda_{n-k}(B(L_m)) - \lambda_{1}(B(L_m)) \right] = \delta
\end{equation}
for some positive real number~$\delta$.
\end{enumerate}
\end{lemma}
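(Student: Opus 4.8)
The plan is to rephrase both conditions as statements about Rayleigh quotients of orthogonal projections of $v$ and $u$, to construct $L_m$ by a short case analysis on the position of $v$ relative to $\limfunc{Eig}(M,\lambda_{\min}(M))$, and then to obtain Part~2 by passing to the limit $L_m\to L^*$ using continuity of $F$, continuity of eigenvalues, and the eigenspace identity \eqref{eqn:eigenspaceeq} at the limiting matrix. I would first record the facts used throughout: writing $R_A(x):=x'Ax/\Vert x\Vert^2$ for symmetric $A$ and $x\neq0$, idempotency and symmetry of $\Pi_{\limfunc{span}(L)^{\bot}}$ give $v'\Pi_{\limfunc{span}(L)^{\bot}}M\Pi_{\limfunc{span}(L)^{\bot}}v/(v'\Pi_{\limfunc{span}(L)^{\bot}}v)=R_M(\Pi_{\limfunc{span}(L)^{\bot}}v)$, and similarly for $u$; from $C_L'C_L=\Pi_{\limfunc{span}(L)^{\bot}}$ and $C_LC_L'=I$ one has $\Vert C_Lv\Vert=\Vert\Pi_{\limfunc{span}(L)^{\bot}}v\Vert$ (so $C_Lv\neq0\iff v\notin\limfunc{span}(L)$), $C_Lv=C_L\Pi_{\limfunc{span}(L)^{\bot}}v$, $v'C_L'B(L)C_Lv/\Vert C_Lv\Vert^2=R_{B(L)}(C_Lv)$, and $C_LMC_L'$ is — via the isometry $C_L'$ onto $\limfunc{span}(L)^{\bot}$ — conjugate to the compression of $M$ to $\limfunc{span}(L)^{\bot}$; in particular $\lambda_1(C_LMC_L')\ge\lambda_{\min}(M)$, with equality, and $C_Lx$ a bottom eigenvector of $C_LMC_L'$, whenever $x\in\limfunc{span}(L)^{\bot}$ is a $\lambda_{\min}(M)$-eigenvector of $M$.

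For the construction in Part~1, put $E:=\limfunc{Eig}(M,\lambda_{\min}(M))$; the hypothesis rules out $M=\lambda_{\min}(M)I_n$, so $E\neq\R^n$. If $v\notin E^{\bot}$, one can choose a $d$-dimensional $S^*$ (with orthonormal-column $L^*$, $\limfunc{span}(L^*)=S^*$) such that $v\notin S^*$, $S^{*\bot}\not\subseteq E$, and $\Pi_{S^{*\bot}}v$ is a nonzero $\lambda_{\min}(M)$-eigenvector of $M$: when $\dim E^{\bot}\ge2$ take $S^*\perp\Pi_Ev$ with $\Pi_{E^{\bot}}v\in S^*$, giving $\Pi_{S^{*\bot}}v=\Pi_Ev$; when $\dim E^{\bot}=1$, i.e.\ $M=c_1I_n+c_2w_0w_0'$ with $c_2>0$ and $w_0\not\parallel v$, a short rotation argument produces $S^*$ with $v,w_0\notin S^*$ and $\Pi_{S^{*\bot}}v\perp w_0$; in both situations $d<n-1$ leaves room to arrange also $S^{*\bot}\not\subseteq E$. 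Then take the constant sequence $L_m\equiv L^*$ and a unit $u\in S^{*\bot}$ with $u'Mu>\lambda_{\min}(M)$; since $\Pi_{S^{*\bot}}u=u$, $R_M(\Pi_{S^{*\bot}}u)=u'Mu=:c$ is constant and \eqref{eqn:eigconv} is trivial. If $v\in E^{\bot}$, no fixed subspace works, because $R_M(\Pi_{S^{\bot}}v)=\lambda_{\min}(M)$ forces $\Pi_{S^{\bot}}v\in E$, hence $0=v\cdot\Pi_{S^{\bot}}v=\Vert\Pi_{S^{\bot}}v\Vert^2$; so I would use a genuinely rotating sequence. Fix a unit $w\in E$ and an orthonormal basis $v,s_2,\dots,s_d$ of a $d$-dimensional $S^*$ with $w\perp S^*$, chosen so that $(S^*+\limfunc{span}(w))^{\bot}\not\subseteq E$, and set $L_m:=(\cos(1/m)v+\sin(1/m)w,\,s_2,\dots,s_d)$. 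A direct computation gives $\Pi_{\limfunc{span}(L_m)^{\bot}}v=\sin(1/m)\bigl(\sin(1/m)v-\cos(1/m)w\bigr)\ne0$ for large $m$, with $R_M(\Pi_{\limfunc{span}(L_m)^{\bot}}v)=\sin^2(1/m)\,v'Mv+\cos^2(1/m)\,\lambda_{\min}(M)\to\lambda_{\min}(M)$, $\Pi_{\limfunc{span}(L_m)^{\bot}}v/\Vert\Pi_{\limfunc{span}(L_m)^{\bot}}v\Vert\to-w$, and $L_m\to L^*=(v,s_2,\dots,s_d)$; a unit $u\in(S^*+\limfunc{span}(w))^{\bot}$ with $u'Mu>\lambda_{\min}(M)$ is orthogonal to every $\limfunc{span}(L_m)$, so $R_M(\Pi_{\limfunc{span}(L_m)^{\bot}}u)=u'Mu=:c$ is constant, giving \eqref{eqn:maxconv}. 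In either case $v\notin\limfunc{span}(L_m)$ for all $m$ and $\Pi_{\limfunc{span}(L_m)^{\bot}}v/\Vert\Pi_{\limfunc{span}(L_m)^{\bot}}v\Vert\to w^{\sharp}$ for some unit $\lambda_{\min}(M)$-eigenvector $w^{\sharp}$ of $M$ lying in $\limfunc{span}(L^*)^{\bot}$.

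For Part~2, choose the $C_{L_m}$ compatibly so that $C_{L_m}\to C_{L^*}$ (possible since $\limfunc{span}(L_m)^{\bot}\to\limfunc{span}(L^*)^{\bot}$). Then $A_m:=C_{L_m}MC_{L_m}'\to A^*:=C_{L^*}MC_{L^*}'$, which lies in (the closure of) the set figuring in the hypothesis, so continuity of $F$ there gives $B(L_m)=F(A_m)\to B^*:=F(A^*)$, hence $\lambda_1(B(L_m))\to\lambda_1(B^*)$ and $\lambda_{n-k}(B(L_m))\to\lambda_{n-k}(B^*)$. From the construction $C_{L_m}v\neq0$ and $C_{L_m}v/\Vert C_{L_m}v\Vert=C_{L_m}\bigl(\Pi_{\limfunc{span}(L_m)^{\bot}}v/\Vert\Pi_{\limfunc{span}(L_m)^{\bot}}v\Vert\bigr)\to C_{L^*}w^{\sharp}=:\hat v^*$, a unit vector (as $w^{\sharp}\in\limfunc{span}(L^*)^{\bot}$). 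Since $w^{\sharp}$ is a $\lambda_{\min}(M)$-eigenvector of $M$ in $\limfunc{span}(L^*)^{\bot}$, $A^*\hat v^*=C_{L^*}M\Pi_{\limfunc{span}(L^*)^{\bot}}w^{\sharp}=\lambda_{\min}(M)\hat v^*$, so $\lambda_1(A^*)=\lambda_{\min}(M)$ and, by \eqref{eqn:eigenspaceeq} at $A^*$, $\hat v^*\in\limfunc{Eig}(A^*,\lambda_1(A^*))=\limfunc{Eig}(B^*,\lambda_1(B^*))$; hence $R_{B(L_m)}(C_{L_m}v)\to R_{B^*}(\hat v^*)=\lambda_1(B^*)$, which with $\lambda_1(B(L_m))\to\lambda_1(B^*)$ is exactly \eqref{eqn:eigconv2}. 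For \eqref{eqn:maxconv2}, $\lambda_{n-k}(B(L_m))-\lambda_1(B(L_m))\to\lambda_{n-k}(B^*)-\lambda_1(B^*)=:\delta\ge0$; and $\delta>0$, for if $B^*$ were a scalar matrix then by \eqref{eqn:eigenspaceeq} so would be $A^*$, making every Rayleigh quotient with respect to $A^*$ equal $\lambda_1(A^*)=\lambda_{\min}(M)$ and contradicting $R_{A^*}(C_{L^*}u)=u'Mu=c>\lambda_{\min}(M)$ (here $u\in\limfunc{span}(L^*)^{\bot}$, so $\Vert C_{L^*}u\Vert=1$ and $R_{A^*}(C_{L^*}u)=R_M(u)$). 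Since the limit exists, $\liminf_{m\to\infty}\bigl[\lambda_{n-k}(B(L_m))-\lambda_1(B(L_m))\bigr]=\delta>0$.

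The hard part is Part~1: one must simultaneously make the projection of $v$ into $\limfunc{span}(L_m)^{\bot}$ asymptotically a bottom eigenvector of the compressed $M$, keep that projection nonzero for every $m$, and still reserve enough of $\limfunc{span}(L_m)^{\bot}$ for a witness $u$ with strictly larger Rayleigh quotient — and it is exactly the exclusion of $M=c_1I_n+c_2vv'$ with $c_2\ge0$ that makes all three compatible (in the excluded case $E\supseteq v^{\bot}$, so $\limfunc{span}(L)^{\bot}\subseteq v^{\bot}\subseteq E$ whenever $v\in\limfunc{span}(L)$, leaving no such $u$; consistently, \eqref{eqn:eigconv2} and \eqref{eqn:maxconv2} would then be mutually contradictory). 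The delicate sub-points are the dimension count/rotation producing $S^*$ when $v\notin E^{\bot}$ (particularly $\dim E^{\bot}=1$) and the fact that $v\in E^{\bot}$ genuinely requires a varying sequence rather than a fixed subspace; once the sequence is in hand, Part~2 is a pure continuity argument.
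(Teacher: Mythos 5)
Your Part~1 takes a genuinely different route from the paper. The paper splits on whether $v\in E:=\limfunc{Eig}(M,\lambda_{\min}(M))$ and always uses a rotating sequence $v_m=x+b_m v$ when $v\notin E$; you split instead on whether $v\in E^{\perp}$, and notice that when $v\notin E^{\perp}$ a \emph{constant} sequence already works (by arranging $\Pi_{S^{*\perp}}v$ to be a nonzero $\lambda_{\min}$-eigenvector of $M$), reserving the rotation only for $v\in E^{\perp}$ where you correctly observe that no fixed subspace can keep $\Pi_{S^{\perp}}v\neq 0$ while driving the Rayleigh quotient to $\lambda_{\min}(M)$. This is a cleaner decomposition and each step I checked is sound, but the sub-case $\dim E^{\perp}=1$, $v\notin E^{\perp}$ (so $M=c_1I_n+c_2w_0w_0'$, $c_2>0$, $w_0\not\parallel v$) is dispatched as ``a short rotation argument produces $S^*$'' with no computation; it does go through — writing $v=aw_0+bw_1$ with $a,b\neq 0$ and using an auxiliary $w_2\perp w_0,w_1$ (available since $n\geq d+2\geq 3$), one can solve for a direction $z=c_0w_0+c_1w_1+c_2w_2$ with $w_0'\Pi_{\limfunc{span}(z)^{\perp}}v=0$, $v,w_0\notin\limfunc{span}(z)$ — but this is exactly where the excluded form is doing hidden work and it should be written out.

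The gap that would need to be repaired is in Part~2. You write ``choose the $C_{L_m}$ compatibly so that $C_{L_m}\to C_{L^*}$,'' but the hypothesis of Part~2 only asserts that \emph{for each} $L$ there exists \emph{some} $C_L$ with $B(L)=F(C_LMC_L')$; you do not get to re-choose $C_{L_m}$, and for a general $F$ the quantity $v'C_{L_m}'B(L_m)C_{L_m}v/\|C_{L_m}v\|^2$ does depend on which $C_{L_m}$ is used (cf.\ Remark~\ref{rem:opttest}, which notes this dependence explicitly). Without the coherence you assume, neither $A_m=C_{L_m}MC_{L_m}'$, nor $B(L_m)=F(A_m)$, nor $C_{L_m}v/\|C_{L_m}v\|$ need converge, so your identification of $A^*$, $B^*$, and $\hat{v}^*$ is unjustified, and your proof of \eqref{eqn:maxconv2}, which takes the full limit $\lambda_{n-k}(B(L_m))-\lambda_1(B(L_m))\to\delta$, does not go through. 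The paper sidesteps this with the standard subsubsequence device: fix an arbitrary subsequence, extract a further subsequence along which the bounded sequences $v_m:=C_{L_m}v/\|C_{L_m}v\|$ and $A_m$ converge to some $v_*$ and $A$, use \eqref{eqn:eigconv} and the compression bound $\lambda_1(A_m)\geq\lambda_{\min}(M)$ to conclude $\lambda_1(A)=\lambda_{\min}(M)$ and $v_*\in\limfunc{Eig}(A,\lambda_1(A))$, then apply \eqref{eqn:eigenspaceeq}; the conclusion holds along every subsequence, hence for the full sequence. For \eqref{eqn:maxconv2} the paper argues by contradiction from a subsequence with spectral gap tending to zero, using a parallel extraction for $u_m:=C_{L_m}u/\|C_{L_m}u\|$. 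Your eigenvector-based identification of $\hat{v}^*$ and your $\delta>0$ argument are conceptually fine and would survive this repair, but the subsequence machinery is not optional here.
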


\begin{proof}
Before we prove Part 1, we note that it suffices to verify the existence claim without the requirement that~$L_m$ converges: Convergence of~$L_m$ can then be achieved by passing to a subsequence.

1.a) Consider first the case where~$v \in \limfunc{Eig}(M, \lambda_{\min}(M))$: Let~$u \in \limfunc{Eig}(M, \lambda_{\max}(M))$ such that~$\|u\| = 1$, and set~$L_{m,\bot} := (u,v,w_1, \hdots, w_{n-d-2})$ for~$w_1, \hdots, w_{n-d-2}$ linearly independent elements of~$\limfunc{span}((u,v))^{\bot}$ (with the implicit understanding that~$L_{m,\bot} = (u,v)$ in case~$d  = n-2$). By assumption~$M$ is not a multiple of~$I_n$, thus~$\lambda_{\min}(M) < \lambda_{\max}(M)$, from which it also follows that~$L_{m, \bot}$ has full column rank~$n-d\geq 2$ for every~$m \in \N$. For every~$m \in \N$ set~$L_m$ equal to an~$n \times d$ matrix such that~$L_m'L_m = I_d$ and~$\limfunc{span}(L_m)^{\bot} = \limfunc{span}(L_{m, \bot})$. Then Equations~\eqref{eqn:eigconv} and~\eqref{eqn:maxconv} (with~$c = \lambda_{\max}(M)$) follow immediately from~$\Pi_{\limfunc{span}(L_m)^{\bot}}v = v$ and~$\Pi_{\limfunc{span}(L_m)^{\bot}}u = u$.

1.b) Next, we consider the case where~$v \notin \limfunc{Eig}(M, \lambda_{\min}(M))$: We first claim that there must exist an~$x \in \limfunc{Eig}(M, \lambda_{\min}(M))$ such that~$\|x\| = 1$ and a vector~$u \in \limfunc{span}(v,x)^{\bot}$ such that~$\|u\| = 1$ and such that~$u'Mu > \lambda_{\min}(M)$. We argue by contradiction: First of all, if the claim was false, then~$\limfunc{dim}(\limfunc{Eig}(M, \lambda_{\min}(M))) = n-1$ would follow. %(using that~$M$ has at least two different eigenvalues)
We could then choose~$v_1, \hdots, v_{n-1}$ an orthonormal basis of~$\limfunc{Eig}(M, \lambda_{\min}(M))$.  Under the assumption that the above claim was wrong, it would further follow that~$\limfunc{span}(v, v_i)^{\bot} \subseteq \limfunc{Eig}(M, \lambda_{\min}(M))$ for every~$i = 1, \hdots, n-1$, implying~$\limfunc{span}(v, v_i)^{\bot} \subseteq \limfunc{span}(v_1, \hdots, v_{i-1}, v_{i+1}, \hdots, v_{n-1})$ for every~$i = 1, \hdots, n-1$, which, by a dimension argument using~$v \notin \limfunc{Eig}(M, \lambda_{\min}(M))$, is equivalent to \begin{equation}
\limfunc{span}(v, v_i)^{\bot} = \limfunc{span}(v_1, \hdots, v_{i-1}, v_{i+1}, \hdots, v_{n-1}) \quad \text{ for } i = 1, \hdots, n-1;
\end{equation}
or equivalently 
\begin{equation}
\limfunc{span}(v, v_i) = \limfunc{span}(v_1, \hdots, v_{i-1}, v_{i+1}, \hdots, v_{n-1})^{\bot} \quad \text{ for } i = 1, \hdots, n-1.
\end{equation}
Since~$n \geq 3$, setting~$i = 1$ and~$i = 2$ in the previous display then shows that~$v$ is orthogonal to~$v_1, \hdots, v_{n-1}$, and hence~$\limfunc{span}(v) = \limfunc{Eig}(M, \lambda_{\max}(M))$ would follow. But then we could conclude that~$M= \lambda_{\min}(M)I_n + (\lambda_{\max}(M)-\lambda_{\min}(M))vv'$, a contradiction. Now, let~$x \in \limfunc{Eig}(M, \lambda_{\min}(M))$ be such that~$\|x\| = 1$ and a corresponding~$u \in \limfunc{span}(v,x)^{\bot}$ such that~$\|u\| = 1$ and such that~$u'Mu > \lambda_{\min}(M)$. Let~$b_m \neq 0$ be a sequence that converges to~$0$ and such that~$b_m \neq -v'x$ holds for every~$m \in \N$. Then, we define~$v_m := x + b_m v ~\bot~ u$ and set~$L_{m, \bot} := (u, v_m, w_1, \hdots, w_{n-d-2})$ (with~$L_{m, \bot} = (u, v_m)$ in case~$d = n-2$), for~$w_1, \hdots, w_{n-d-2}$ linearly independent elements of~$\limfunc{span}(u, v, x)^{\bot}$ (which is possible as~$d \geq 1$). As~$v_m \neq 0$ follows from~$b_m \neq -v'x$, the matrix~$L_{m, \bot}$ has full column rank~$n-d \geq 2$ for every~$m \in \N$. Now, for every~$m \in \N$ set~$L_m$ equal to an~$n \times d$ matrix such that~$L_m'L_m = I_d$ and~$\limfunc{span}(L_m)^{\bot} = \limfunc{span}(L_{m, \bot})$. Then
\begin{equation}
\Pi_{\limfunc{span}(L_m)^{\bot}}v = \Pi_{\limfunc{span}(L_{m, \bot})}v = \Pi_{\limfunc{span}((u, v_m))}v = \Pi_{\limfunc{span}((v_m))}v = a_m v_m,
\end{equation}
where~$a_m = (v'x + b_m)/(v_m'v_m) \neq 0$ holds for all~$m$. From~$v_m \neq 0$, we thus obtain~$\Pi_{\limfunc{span}(L_m)^{\bot}}v \neq 0$ for every~$m \in \N$. But~$v_m \to x$ hence shows that
\begin{equation}
a_m^{-2} v'\Pi_{\limfunc{span}(L_m)^{\bot}} M\Pi_{\limfunc{span}(L_m)^{\bot}}v \to \lambda_{\min}(M) \quad \text{ and } \quad a_m^{-2}v'\Pi_{\limfunc{span}(L_m)^{\bot}} v \to 1,
\end{equation}
which implies~\eqref{eqn:eigconv}. Equation~\eqref{eqn:maxconv} follows because~$u \in \limfunc{span}(L_m)^{\bot}$ gives~$\Pi_{\limfunc{span}(L_m)^{\bot}}u = u$, and since~$u$ was chosen such that~$\|u\| = 1$ and~$u'Mu > \lambda_{\min}(M)$. 

2) Obviously,~$C_{L_m}v \neq 0$ follows from~$\Pi_{\limfunc{span}(L_m)^{\bot}}v \neq 0$. Consider first Equation~\eqref{eqn:eigconv2}. Let~$m'$ be an arbitrary subsequence of~$m$. Define~$v_m := C_{L_m} v /\|C_{L_m}v \|$ and~$A_m := C_{L_m} M C_{L_m}'$. Clearly~$\|v_m\|=1$, and~$A_m$ is a norm-bounded sequence because~$C_{L_m} C_{L_m}' = I_{n-d}$. The latter also implies 
\begin{equation}\label{eqn:eigenbound}
\lambda_{1}(M) \leq \lambda_{1}(A_m) \leq \lambda_{n-d}(A_m) \leq \lambda_n(M) \quad \text{ for every } m \in \N.
\end{equation}
Hence, we can choose a subsequence~$m''$ of~$m'$, say, along which~$v_m$ and~$A_m$ converge to~$v_*$ and~$A$, say, respectively. Note that~$\|v_*\| = 1$. Next, we use~$C_{L_m}'C_{L_m} = \Pi_{\limfunc{span}(L_m)^{\bot}}$ to rewrite~$$v' \Pi_{\limfunc{span}(L_m)^{\bot}} M  \Pi_{\limfunc{span}(L_m)^{\bot}} v / v' \Pi_{\limfunc{span}(L_m)^{\bot}}v = v_m' C_{L_m}M C_{L_m}' v_m = v_m' A_m v_m,$$ and use Equation~\eqref{eqn:eigconv} to conclude that along~$m''$ we have~$v_m' A_m v_m \to v_*'Av_* = \lambda_{\min}(M)$. From Equation~\eqref{eqn:eigenbound} we obtain~$\lambda_{\min}(M) = \lambda_{\min}(A)$, hence~$$v_* \in \limfunc{Eig}(A, \lambda_{1}(A)) = \limfunc{Eig}(F(A), \lambda_1(F(A))),$$ where the equality is obtained from~\eqref{eqn:eigenspaceeq}. Finally, we observe that along~$m''$ we have (using continuity of~$F$) that~$B(L_m) = F(A_m) \to F(A)$, from which 
\begin{equation}
v' C_{L_m}' B(L_m)C_{L_m}v /\|C_{L_m}v\|^2 = v_m' F(A_m) v_m \to v_*' F(A) v_* = \lambda_1(F(A)),
\end{equation}
and~$\lambda_{1}(B(L_m)) \to \lambda_{1}(F(A))$ follows (along~$m''$). Hence, we have shown that the statement in Equation~\eqref{eqn:eigconv2} holds along the subsequence~$m''$ of~$m'$. But~$m'$ was arbitrary. Therefore, we are done. 

For~\eqref{eqn:maxconv2} we argue by contradiction. Note first that the limit inferior in~\eqref{eqn:maxconv2} can not be infinite, because~$B(L_m) = F(C_{L_m} M C_{L_m}')$, and the continuity property of~$F$ together with boundedness of~$C_{L_m} M C_{L_m}'$. Now, assuming~\eqref{eqn:maxconv2} were false, we could choose a subsequence~$m'$ of~$m$ such that~$\lambda_{n-k}(B(L_{m'})) - \lambda_1(B(L_{m'})) \to 0$. Choose a subsequence~$m''$ of~$m'$ along which~$v_m$ just defined above,~$u_m := C_{L_m} u / \| C_{L_m} u\|$ (note that~$C_{L_m} u \neq 0$ follows from~$\Pi_{\limfunc{span}(L_m)^{\bot}}u \neq 0$) and~$A_m := C_{L_m}M C_{L_m}'$ converge to~$v_*$,~$u_*$ and~$A$, respectively (where~$v_*$ and~$A$ might differ from the limits in the preceding paragraph where we established Equation~\eqref{eqn:eigconv2}). Note also that~$\|v_*\| = \|u_*\| = 1$. Recall that~$B(L_m) = F(A_m)$, note that
\begin{equation}
\lambda_{n-k}(F(A_{m})) \geq u'C_{L_{m}}' F(A_{m}) C_{L_{m}} u / \| C_{L_{m}} u\|^2 = u_m'F(A_m)u_m \geq \lambda_{1}(F(A_{m})),
\end{equation}
and that, using~$\lambda_{n-k}(F(A_{m'})) - \lambda_1(F(A_{m'})) \to 0$ together with continuity of~$F$ at~$A$, the upper and lower bound in the previous display converge along~$m''$ to~$\lambda_1(F(A))$. It follows that~$u_*'F(A)u_*' = \lambda_1(F(A))$, and hence~$u_* \in \limfunc{Eig}(F(A), \lambda_1(F(A))) = \limfunc{Eig}(A, \lambda_1(A))$, the equality following from Equation~\eqref{eqn:eigenspaceeq}. But from Equation~\eqref{eqn:maxconv} we conclude that~$\lambda_{\min}(M) < c = u_m'A_m u_m = u_*'A u_* = \lambda_1(A)$ holds. To arrive at a contradiction it suffices to show that~$\lambda_{\min}(M) = \lambda_{\min}(A)$. But (similar as argued above in the proof of \eqref{eqn:eigconv2}) this follows from Equation~\eqref{eqn:eigconv}, showing that~$v_m'A_m v_m \to v_*'Av_* = \lambda_{\min}(M)$ along~$m''$, together with Equation~\eqref{eqn:eigenbound}.
\end{proof}

\begin{proof}[Proof of Proposition~\ref{prop:mart}:]
We start with (1.): Let~$\alpha \in (0, 1)$. Let~$X_m$ be a sequence of~$n \times k$-dimensional orthonormal matrices converging to some~$Z \in \R^{n \times k}$ orthonormal, such that~$e \notin \limfunc{span}(X_m)$ holds for every~$m \in \N$, such that
\begin{equation}\label{eqn:limtbxmeEV}
T_{B(X_m), C_{X_m}}(e)- \lambda_1(B(X_m)) = e'C_{X_m}'B(X_m)C_{X_m}e/\|C_{X_m}e\|^2 - \lambda_1(B(X_m)) \to 0,
\end{equation}
and such that~$\liminf_{m \to \infty} \lambda_{n-k}(B(X_m)) - \lambda_1(B(X_m)) = \delta > 0$, and where~$\delta$ is a real number. Such a sequence exists as a consequence of Part~2 of Lemma~\ref{lem:exseq} (applied with~$d = k$ and~$v = e$). Without loss of generality, passing to a subsequence if necessary, we assume that~$\lambda_{n-k}(B(X_m)) - \lambda_1(B(X_m)) > 0$ holds for every~$m \in \N$. Denote by~$\kappa_m$ the critical value~$\kappa(\alpha)$ corresponding to~$\Phi_{B(X_m), C_{X_m}, \kappa(\alpha)}$, cf.~Lemma~\ref{lem:sizecontrol}, and recall from that lemma that~$\lambda_1(B(X_m)) < \kappa_m < \lambda_{n-k}(B(X_m))$ then holds as~$\alpha \in (0, 1)$. 
%Note that from the assumed continuity property of~$F$, and since~$C_{X_m}M C_{X_m}'$ is a norm-bounded sequence, it immediately follows that~$B(X_m) = F(C_{X_m}M C_{X_m}')$ is a norm-bounded sequence as well. 
%First of all this implies that~$\delta \in (0, \infty)$ holds. 
Passing to a subsequence if necessary, we can assume that~$C_{X_m}$ converges to~$D_Z$, say, an~$(n-k) \times n$ matrix the rows of which form an orthonormal basis of~$\limfunc{span}(Z)^{\bot}$. Recall the continuity property of~$F$ and that~$B(X_m) = F(C_{X_m} M C_{X_m}')$. It follows that~$B(X_m)$,~$\lambda_1(B(X_m))$,~$\lambda_{n-k}(B(X_m))$ converge to~$H:= F(D_Z M D_Z')$,~$b:=\lambda_1(H)$ and~$c:= \lambda_{n-k}(H)$, respectively, with~$c - b \geq \delta > 0$. Passing to another subsequence, if necessary, we can additionally achieve that~$\kappa_m \to \kappa^*$, say. Obviously,~$b \leq \kappa^* \leq c$ holds. We now argue that~$b < \kappa^*$ must hold: By the definition of~$\kappa_m$~$$\alpha = P_{0, 1, 0}^{X_m}(\Phi_{B(X_m), C_{X_m}, \kappa(\alpha)}) = P_{0, 1, 0}^{X_m}(\Phi_{B(X_m), C_{X_m}, \kappa_m}) = P^{X_m}_{0, 1, 0}(\{y \in \R^n: T_{B(X_m)}(y) > \kappa_m\}).$$ Denoting by~$G_m$ the~cdf.~of the image measure~$P^{X_m}_{0, 1, 0} \circ T_{B(X_m),C_{X_m}}$ this implies~$1-\alpha = G_m(\kappa_m)$. From Lemma~B.4 of \cite{PP17} we obtain that the support of~$P^{X_m}_{0, 1, 0} \circ T_{B(X_m),C_{X_m}}$ coincides with~$[\lambda_1(B(X_m)), \lambda_{n-k}(B(X_m))]$, that~$G_m$ is a continuous function, and that~$G_m$ is strictly increasing on~$[\lambda_1(B(X_m)), \lambda_{n-k}(B(X_m))]$. Hence, from~$1-\alpha \in (0, 1)$, it follows that~$G_m^{-1}(1-\alpha) = \kappa_m$, where~$G_m^{-1}$ denotes the quantile function corresponding to~$G_m$. It is easy to see that~$G_m$ converges in distribution to the cdf.~$G$, say, of~$P_{0, 1, 0}^{Z} \circ T_{H, D_Z}$, where the function~$T_{H, D_Z}: \R^n \to \R$ is defined as
\begin{equation}
T_{H, D_Z}(y) = \begin{cases}
y'D_Z' H D_Z y/\|D_Zy\|^2 & \text{ if } y \notin \limfunc{span}(Z), \\
\lambda_1(H) & \text{ else}.
\end{cases}
\end{equation}
Again, Lemma~B.4 of \cite{PP17} (with ``$B = H$ and~$C_X = D_Z$'') shows that the support of~$G$ is~$[b, c]$, that~$G$ is continuous (recall that~$c-b \geq \delta > 0$), and that~$G$ is strictly increasing on~$[b,c]$. This implies that the quantile function~$G^{-1}$ corresponding to~$G$ is continuous on~$(0, 1)$, and that~$G^{-1}(1-\alpha) > b$. Using the convergence in distribution pointed out above, we conclude that the quantiles~$\kappa_m = G_m^{-1}(1-\alpha) \to G^{-1}(1-\alpha) = \kappa^* > b$. Using Equation~\eqref{eqn:limtbxmeEV} can now conclude that there exists an~$m_* \in \N$ such that~$X_{m_*} =: X_*$ is of full column rank, such that~$e \notin \limfunc{span}(X_*)$, such that~$\lambda_1(B(X_*)) < \lambda_{n-k}(B(X_*))$, and such that~$T_{B(X_*), C_{X_*}}(e) < \kappa_{m_*}$ (with~$\kappa_{m^*}$ the critical value~$\kappa(\alpha)$ corresponding to~$\Phi_{B(X_*), C_{X_m}, \kappa(\alpha)}$ and~$\alpha \in (0, 1)$). Theorem~\ref{lem:zptphiPP17} establishes~$X_* \in \mathscr{X}(\alpha; B)$.

\smallskip

We now prove (2.): Recall that~$X_*$ has full column rank and~$e\notin\limfunc{span}(X_*)$. We conclude that both statements (i)~$X$ is of full column rank and (ii)~$e \notin \limfunc{span}(X)$ hold for every~$X$ in an open set~$\mathscr{N}$, say, containing~$X_*$. We now claim that~$$T_{B(X), C_X}(e) < \overline{\kappa}(X); \text{ with } \overline{\kappa}(X) \in (\lambda_1(B(X)), \lambda_{n-k}(B(X))) \text{ s.t.~} P_{0, 1, 0}(\Phi_{B(X), C_X, \overline{\kappa}(X)} ) = \alpha,$$ holds for every~$X$ in an open set~$\mathscr{O} \subseteq \mathscr{N}$ containing~$X_{*}$ (that~$X_*$ satisfies the display was just shown above). Arguing as above, this claim and Theorem~\ref{lem:zptphiPP17} (together with Lemma~\ref{lem:sizecontrol}) would imply~$\mathscr{O} \subseteq \mathscr{X}(\alpha; B)$, and we were done. To prove the claim, it suffices to verify that~$T_{B(X), C_X}(e)$ and~$\overline{\kappa}(X)$ as in the previous display are (well defined) continuous functions of~$X$ on a neighborhood of~$X_*$. First, in order to ensure via Lemma~\ref{lem:sizecontrol} that a~$\overline{\kappa}(X)$ as in the previous display uniquely exists on a neighborhood of~$X_*$, we show that~$\lambda_1(B(X)) < \lambda_{n-k}(B(X))$ holds on an open subset of~$\mathscr{N}$ containing~$X_*$. Recalling that~$\lambda_1(B(X_*)) < \lambda_{n-k}(B(X_*))$, and noting that the map~$y \mapsto C_{X_*}y$ is a surjection of~$\R^n \backslash \limfunc{span}(X_*)$ to~$\R^{n-k} \backslash \{0\}$, we conclude that there exist two vectors~$y_1$ and~$y_2$ in~$\R^n \backslash \limfunc{span}(X_*)$, and such that~$$\lambda_1(B(X_*)) = T_{B(X_*), C_{X_*}}(y_1) < T_{B(X_*), C_{X_*}}(y_2) =\lambda_{n-k}(B(X_*)).$$ holds. From the additional continuity property in~(2.)~it follows that~$y_1, y_2 \notin \limfunc{span}(X)$ and~$T_{B(X), C_{X}}(y_1) < T_{B(X), C_{X}}(y_2)$ hold on an open set~$\mathscr{O}_1 \ni X_*$, say, such that~$\mathscr{O}_1 \subseteq \mathscr{N}$, from which it follows that for every~$X \in \mathscr{O}_1$ we have~$\lambda_1(B(X)) < \lambda_{n-k}(B(X))$. From~$\mathscr{O}_1 \subseteq \mathscr{N}$ we conclude from Lemma~\ref{lem:sizecontrol} that a~$\overline{\kappa}(X)$ satisfying the property to the right in penultimate  display uniquely exists for every~$X \in \mathscr{O}_1$. Since~$X \mapsto T_{B(X), C_X}(e)$ is continuous on~$\mathscr{O}_1 \subseteq \mathscr{N}$ by assumption, it remains to verify that~$X \mapsto \overline{\kappa}(X)$ is continuous on~$\mathscr{O}_1$. Lemma~B.4 of~\cite{PP17} and the definition of~$\overline{\kappa}(X)$ show that for~$X \in \mathscr{O}_1$ we have~$\overline{\kappa}(X) = F_X^{-1}(1-\alpha)$, where~$F_X$ denotes the cdf.~of the image measure~$P_{0, 1, 0} \circ T_{B(X), C_X}$. It is easy to see (using the additional continuity condition in~(2.)) that the map~$X \mapsto F_X$ is continuous on~$\mathscr{O}_1$ (equipping the co-domain with the topology of weak convergence). Furthermore, for every~$X \in \mathscr{O}_1$ it holds (via Lemma~B.4 in \cite{PP17}) that~$P_{0, 1, 0} \circ T_{B(X), C_X}$ has support~$[\lambda_1(B(X)), \lambda_{n-k}(B(X))]$ (which is non-degenerate), that the cdf.~$F_X$ is continuous, and strictly increasing on~$[\lambda_1(B(X)), \lambda_{n-k}(B(X))]$. Hence, for every~$X \in \mathscr{O}_1$ the quantile function~$F_X^{-1}$ is continuous at~$1-\alpha \in (0, 1)$. Continuity of~$X \mapsto \overline{\kappa}(X) = F_X^{-1}(1-\alpha)$ on~$\mathscr{O}_1$ follows.
\end{proof}

\begin{proof}[Proof for the claim made in Remark~\ref{rem:summary}:]
We verify that for~$B(X) = -(C_X \Sigma(\overline{\rho})C_X')^{-1}$, $\overline{\rho} \in (0, a)$, and every~$z \in \R^n$ the function~$X \mapsto T_{B(X), C_X}(z)$ is continuous at every~$X \in \R^{n \times k}$ of full column rank such that~$z \notin \limfunc{span}(X)$. Fix~$z \in \R^n$. Let~$X$ be of full column rank such that~$z \notin \limfunc{span}(X)$, and let~$X_m$ be a sequence converging to~$X$. Eventually,~$X_m$ is of full column rank and satisfies~$z \notin \limfunc{span}(X_m)$, hence we may assume that this is the case for the whole sequence. We need to show that as~$m \to \infty$ we have~$T_{B(X_m), C_{X_m}}(z) \to T_{B(X), C_X}(z)$, or equivalently that
\begin{equation}
\frac{z'C_{X_m}' (C_{X_m} \Sigma(\overline{\rho})C_{X_m}')^{-1} C_{X_m} z}{z' \Pi_{\limfunc{span}(X_m)^{\bot}} z} \to \frac{z'C_{X}' (C_{X} \Sigma(\overline{\rho})C_{X}')^{-1} C_{X} z}{z' \Pi_{\limfunc{span}(X)^{\bot}} z}.
\end{equation}
Since~$X$ is of full column rank~$z' \Pi_{\limfunc{span}(X_m)^{\bot}} z \to z' \Pi_{\limfunc{span}(X)^{\bot}} z \neq 0$ obviously holds. For the numerators, let~$m'$ be an arbitrary  subsequence of~$m$, and choose~$m''$ a subsequence of~$m'$ such that along~$m''$ the sequence~$C_{X_m}$ converges to~$D$, say. Note that~$D$ is necessarily orthonormal and~$\limfunc{span}(D) = \limfunc{span}(X)^{\bot}$. Hence, along~$m''$, noting that~$\Sigma(\overline{\rho})$ is positive definite by assumption, we have~$z'C_{X_m}' (C_{X_m} \Sigma(\overline{\rho})C_{X_m}')^{-1} C_{X_m} z \to z'D' (D \Sigma(\overline{\rho})D')^{-1} D z$. Since~$D = UC_X$ holds for an~$(n-k)\times(n-k)$ orthonormal matrix~$U$, say, it follows that 
\begin{equation}
z'D' (D \Sigma(\overline{\rho})D')^{-1} D z = z'C_X' U' (UC_X \Sigma(\overline{\rho})C_X'U')^{-1} UC_X z = z'C_{X}' (C_{X} \Sigma(\overline{\rho})C_{X}')^{-1} C_{X} z.
\end{equation}
Since the subsequence~$m'$ was arbitrary, we are done.
\end{proof}

\section{Proofs for results in Section~\ref{sec:avoid}}\label{app:avoid}

\begin{proof}[Proof of Theorem~\ref{thm:artreg}:]
Denote by~$\bar{P}_{(\beta, \gamma), \sigma, \rho}$ the distribution induced by~\eqref{linmod}, but where~$X$ is replaced by~$\bar{X} = (X, e)$ (a matrix with column rank~$k+1<n$), and where~$\gamma$ is the regression coefficient corresponding to~$e$. Note also that for every~$\beta \in \R^k$, every~$\sigma \in (0, \infty)$ and every~$\rho \in [0, a)$ the measure~$\bar{P}_{(\beta, 0), \sigma, \rho}$ coincides with~$P_{\beta, \sigma, \rho}$. An application of Corollary~2.22 in \cite{PP17} (recall that~$\bar{\kappa}(\alpha) \in (\lambda_1(\bar{B}) < \lambda_{n-k-1}(\bar{B}))$ from the discussion preceding Equation~\eqref{eqn:corsize}, and acting as if~$\bar{X}$ was the underlying design matrix) one then immediately obtains that for every~$\beta \in \R^k$, every~$\sigma \in (0, \infty)$ and every~$\gamma \in \R$ it holds that
\begin{equation}
0 < \lim_{\rho \to a} \bar{P}_{(\beta, \gamma), \sigma, \rho}(\bar{\Phi}_{\bar{B}, \bar{\kappa}(\alpha)}) = \mathrm{Pr}(\bar{T}_{\bar{B}}(\Lambda \mathbf{G}) > \bar{\kappa}(\alpha)) < 1.
\end{equation}
Setting~$\gamma = 0$ then delivers the claim.
\end{proof}

\begin{proof}[Proof of Proposition~\ref{prop:excrit}:]
We proceed in~3 steps:

\smallskip

1) By a simple~$G_X$-invariance argument (recall~A.1 and that~$T_{C_Xee'C_X'}$ is~$G_X$-invariant) it suffices to verify that  for every~$\alpha \in (0, 1)$ and every~$\varepsilon \in (0, \alpha)$ there exists a~$c(\alpha, \varepsilon) \in (0, \kappa(\varepsilon)]$ such that 
\begin{equation}\label{eqn:defc2}
E_{0, 1, 0}\left[ \min \left(\varphi_{\alpha - \varepsilon} + \mathbf{1}_{\Phi_{C_Xee'C_X', c(\alpha, \varepsilon)}},~ 1\right) \right] =  \alpha,
\end{equation}
and such that for every~$c' \in (0, c(\alpha, \varepsilon))$ it holds that the supremum in the previous display is greater than~$\alpha$. 

2) We claim that the non-increasing function~$g: \R \to \R$ defined via
$$c \mapsto E_{0, 1, 0}\left[\min \left(\varphi_{\alpha - \varepsilon} + \mathbf{1}_{\Phi_{C_Xee'C_X', c}},~ 1\right)\right]$$ is continuous. To verify this claim let~$c \in \R$, and let~$c_m \to c$ be a real sequence. By the Dominated Convergence Theorem, to show that~$g(c_m) \to g(c)$ holds, it is enough to verify 
\begin{equation}
\lim_{m \to \infty} \left[\min \left(\varphi_{\alpha - \varepsilon}(y) + \mathbf{1}_{\Phi_{C_Xee'C_X', c_m}}(y),~ 1\right)\right] = \left[\min \left(\varphi_{\alpha - \varepsilon}(y) + \mathbf{1}_{\Phi_{C_Xee'C_X', c}}(y),~ 1\right)\right] 
\end{equation}
for~$P_{0, 1, 0}$-almost every~$y \in \R^n$. It suffices to verify that
\begin{equation}
\lim_{m \to \infty} \mathbf{1}_{\Phi_{C_Xee'C_X', c_m}}(y) = \mathbf{1}_{\Phi_{C_Xee'C_X', c}}(y)
\end{equation}
holds for~$P_{0, 1, 0}$-almost every~$y \in \R^n$. The statement in the previous display holds for every~$y$ such that~$T_{C_X ee'C_X'}(y) \neq c$. The claim now follows from~$P_{0, 1, 0}(\{y \in \R^n: T_{C_X ee'C_X'}(y) = c\}) = 0$, which can be obtained from Part~1 of Lemma~B.4 in \cite{PP17} upon noting that~$\lambda_1(C_X ee'C_X') = 0$ (recall that~$k<n-1$) and that~$0 < \|C_Xe\|^2 = \lambda_{n-k}(C_X ee'C_X')$ (the inequality following from~$e \notin \limfunc{span}(X)$).

3) Next, note that~$\alpha - \varepsilon \leq g \leq 1$ (using~A.2 for the lower bound). Observe that~$g(0) = 1$ follows from~$1 \geq g(0) \geq P_{0, 1, 0}(\Phi_{C_X ee'C_X',0}) = 1$, the last equality following from Part~1 of Lemma~B.4 in \cite{PP17}. Observe also that~$g(\|C_X e\|^2) = \alpha - \varepsilon$ follows from~$\alpha - \varepsilon \leq g(\|C_X e\|^2) \leq \alpha - \varepsilon + P_{0, 1, 0}(\Phi_{C_X ee'C_X', \|C_X e\|^2}) = \alpha - \varepsilon$, the last equality following again from  Part~1 of Lemma~B.4 in \cite{PP17}. 
%~$\lim_{c \to 0} g(c) = 1$ and that~$\lim_{c \to \|C_Xe\|^2} g(c) = E_{0, 1, 0}(\varphi_{\alpha - \varepsilon}) = \alpha- \varepsilon$. The first claim follows from~$g(c) \geq P_{0, 1, 0}(\Phi_{C_Xee'C_X', c}) \to 1$ as~$c \to 0$, the latter convergence following from Lemma~\ref{lem:sizecontrol}. For the second claim note that~$g(c_m) \geq E_{0, 1, 0}(\varphi_{\alpha - \varepsilon}) = \alpha - \varepsilon$ and that as~$c \to \|C_Xe\|^2$
%
%\begin{equation}
%g(c_m) \leq \alpha - \varepsilon + P_{0, 1, 0}(\Phi_{C_X ee'C_X', c}) \to \alpha - \varepsilon,
%\end{equation}
%
%using again Lemma~\ref{lem:sizecontrol} for the convergence. From these limits 
From these two observations, monotonicity of~$g$, and the continuity of~$g$ it follows that~$\{c \in \R: g(c) = \alpha\}$ is a closed interval contained in~$(0, \|C_X e\|^2)$. Define~$c(\alpha, \varepsilon)$ as the lower endpoint of this closed interval. Equation~\eqref{eqn:defc2} and thus Equation~\eqref{eqn:defc} follows. Furthermore, since~$c(\alpha, \varepsilon)$ was defined as the lower endpoint, monotonicity of~$g$ implies that every~$c' \in (0, c(\alpha, \varepsilon))$ must satisfy~$g(c') > g(c) = \alpha$. To finally show that~$c(\alpha, \varepsilon) \leq \kappa(\varepsilon)$ holds, suppose the opposite, from which it follows from what was already shown that~$g(\kappa(\varepsilon)) > \alpha$, which is obviously false (cf.~the discussion surrounding~\eqref{eqn:couldusethis}). Note also that~$0 < \kappa(\varepsilon) < \|C_X e\|^2$ follows from Lemma~\ref{lem:sizecontrol}.
\end{proof}

\begin{proof}[Proof of Theorem~\ref{thm:enhthm}:]
1.) Let~$\varepsilon \in (0, \alpha)$. Obviously
\begin{equation}
\varphi^*_{\alpha, \varepsilon} \geq \mathbf{1}_{\Phi_{C_Xee'C_X', c(\alpha, \varepsilon)}},
\end{equation}
which shows that for every~$\beta \in \R^k$, every~$\sigma \in (0, \infty)$ and every~$\rho \in [0,a)$ we have%
\begin{equation}
E_{\beta, \sigma, \rho}(\varphi^*_{\alpha, \varepsilon}) \geq P_{\beta, \sigma, \rho}(\Phi_{C_Xee'C_X', c(\alpha, \varepsilon)}).
\end{equation}
From Proposition~\ref{prop:excrit} we know that~$0 = \lambda_1(C_X ee'C_X') < c(\alpha, \varepsilon) < \lambda_{n-k}(C_X ee'C_X') = \|C_Xe\|^2$. We can therefore use Lemma~\ref{lem:sizecontrol} (with~$B = C_X ee'C_X'$) to conclude that~$c(\alpha, \varepsilon) = \kappa(\alpha^*)$ for some~$\alpha^* \in (0, 1)$, and apply Theorem~\ref{thm:ee} to conclude that for every~$\beta \in \R^k$ and every~$\sigma \in (0, \infty)$ we have~$\lim_{\rho \to a} P_{\beta, \sigma, \rho}(\Phi_{C_X ee'C_X', c(\alpha, \varepsilon)}) = 1$, which together with the lower bound in the previous display proves the claim.

2.) Using~$G_X$-invariance of~$\varphi^*_{\alpha, \varepsilon}$ (for every~$\varepsilon \in (0, \alpha)$) and of~$\varphi_{\alpha}$, together with~$\|\Sigma(\rho)\| > 0$ for every~$\rho \in [0, a)$, it suffices to verify  that
\begin{equation}
\lim_{\varepsilon \to 0^+} \sup_{\rho \in A} |E_{0, \|\Sigma(\rho)\|^{-1/2}, \rho}(\varphi^*_{\alpha, \varepsilon}) - E_{0, \|\Sigma(\rho)\|^{-1/2}, \rho}(\varphi_{\alpha})| = 0.
\end{equation}
Let~$\varepsilon_m \to 0$ be a sequence in~$(0, \alpha)$ and let~$\rho_m$ be a sequence in~$A$. For convenience, set~$\sigma_m := \|\Sigma(\rho_m)\|^{-1/2}$. We verify that 
\begin{equation}\label{eqn:seqcont2}
|E_{0, \sigma_m, \rho_m}(\varphi^*_{\alpha, \varepsilon_m}) - E_{0, \sigma_m, \rho_m}(\varphi_{\alpha})| = |E_{0, \sigma_m, \rho_m}(\varphi^*_{\alpha, \varepsilon_m} - \varphi_{\alpha})| \to 0.
\end{equation}
Let~$m'$ be an arbitrary subsequence of~$m$. By compactness of the unit sphere in~$\R^{n \times n}$, we can choose a subsequence~$m''$ of~$m'$ along which~$\|\Sigma(\rho_m)\|^{-1} \Sigma(\rho_m)$ converges to a symmetric matrix~$\Gamma$, say, which due to the additional assumption on the set~$A$ is positive definite. It follows from Scheff\'e's lemma that along~$m''$ the sequence~$P_{0,\sigma_m,\rho_m}$ (i.e., the Gaussian probability measure with mean~$0$ and covariance matrix~$\|\Sigma(\rho_m)\|^{-1}\Sigma(\rho_m)$) converges in total-variation-distance to~$Q$, a Gaussian probability measure with mean~$0$ and covariance matrix~$\Gamma$. Obviously~$|E_{0, \sigma_m, \rho_m}(\varphi^*_{\alpha, \varepsilon_m} - \varphi_{\alpha})| \leq 2 E_{0, \sigma_m, \rho_m}(.5|\varphi^*_{\alpha, \varepsilon_m} - \varphi_{\alpha}|)$. By, e.g., Lemma~2.3 in \cite{strasser1985mathematical} and since~$.5|\varphi^*_{\alpha, \varepsilon_m} - \varphi_{\alpha}|$ is a sequence of tests, it follows from the total variation convergence established above that along~$m''$ we have
\begin{equation}
|E_{0, \sigma_m, \rho_m}(|\varphi^*_{\alpha, \varepsilon_m} - \varphi_{\alpha}|) - E_{Q}(|\varphi^*_{\alpha, \varepsilon_m} - \varphi_{\alpha}|)| \to 0,
\end{equation}
where~$E_{Q}$ denotes expectation w.r.t.~$Q$. We now claim that~
\begin{equation}\label{eqn:QconvEnh}
E_{Q}(|\varphi^*_{\alpha, \varepsilon_m} - \varphi_{\alpha}|) \to 0.
\end{equation}
This claim, if true, then implies Equation~\eqref{eqn:seqcont2} as the subsequence~$m'$ we started with was arbitrary. We first show that the sequence in the previous display converges to~$0$, when the expectation is taken w.r.t.~$P_{0, 1, 0}$ instead of~$Q$. To this end write
\begin{equation}\label{eqn:testalt}
\varphi^*_{\alpha, \varepsilon_m} - \varphi_{\alpha} = [\varphi_{\alpha - \varepsilon_m} - \varphi_{\alpha}] + (1-\varphi_{\alpha - \varepsilon_m}(y))\mathbf{1}_{\Phi_{C_Xee'C_X', c(\alpha, \varepsilon_m)}}.
\end{equation}
From~A.3 and the Dominated Convergence Theorem we obtain~$E_{0, 1, 0}[|\varphi_{\alpha - \varepsilon_m} - \varphi_{\alpha}|] \to 0$. It remains to show that~$E_{0, 1, 0}(\psi_m) \to 0$ for~$\psi_m := (1-\varphi_{\alpha - \varepsilon_m}(y))\mathbf{1}_{\Phi_{C_Xee'C_X', c(\alpha, \varepsilon_m)}} \geq 0$. By construction and~A.2, however, we have~$E_{0, 1, 0}(\varphi^*_{\alpha, \varepsilon_m}) = \alpha = E_{0, 1, 0}(\varphi_{\alpha})$. Therefore, the preceding display shows that~$-E_{0, 1, 0}[\varphi_{\alpha - \varepsilon_m} - \varphi_{\alpha}] = E_{0,1,0}(\psi_m)$. The statement hence follows from~$E_{0, 1, 0}[\varphi_{\alpha - \varepsilon_m} - \varphi_{\alpha}] \to 0$. Now,  suppose~\eqref{eqn:QconvEnh} were false. Then, there would exist a subsequence~$m^{\star}$ of $m$ along which the sequence in~\eqref{eqn:QconvEnh} converges to~$b > 0$, say. Since~$E_{0, 1, 0}(|\varphi^*_{\alpha, \varepsilon_{m^{\star}}} - \varphi_{\alpha}|) \to 0$, there exists a subsequence~$m^{\star \star}$ of~$m^{\star}$ and a set~$N$ such that~$P_{0, 1, 0}(N) = 0$, and such that for every~$y \in \R^n \backslash N$ it holds that~$|\varphi^*_{\alpha, \varepsilon_{m^{\star \star}}}(y) - \varphi_{\alpha}(y)| \to 0$~(cf., e.g., Theorem 3.12 in \cite{rudin}). From positive-definiteness of~$\Gamma$ it follows, however, that~$Q(N) = 0$, and (by the Dominated Convergence Theorem) that~$0 = \lim_{m^{\star \star} \to \infty }E_Q(|\varphi^*_{\alpha, \varepsilon_{m^{\star \star}}} - \varphi_{\alpha}|) = b$, a contradiction.

%The last claim follows from~$G_X$-invariance together with~$$E_{0, 1, 0}(\min(\varphi_{\alpha - \varepsilon} + \mathbf{1}_{\Phi_{C_X ee'C_X', \kappa(\varepsilon)}}, 1)) \leq E_{0, 1, 0}(\varphi_{\alpha - \varepsilon}) + P_{0, 1, 0}\left( \Phi_{C_X ee'C_X', \kappa(\varepsilon)} \right) \leq \alpha - \varepsilon + \varepsilon = \alpha.$$
\end{proof}

\end{appendix}

\end{document}